\numberwithin{equation}{section}
\DeclareMathOperator{\cl}{cl}
\DeclareMathOperator{\shadow}{shw}
\newtheorem{theor}{Theorem}[section]
\newtheorem{bigthm}{Theorem}
\newtheorem{propo}[theor]{Proposition}
\newtheorem{lemma}[theor]{Lemma}
\newtheorem*{rem*}{Remark}
\newtheorem{rem}[theor]{Remark}
\def\pii{\tau}
\newcommand{\R}{\mathbb{R}^d_+}
\newcommand{\A}{\mathfrak{a}}
\begin{document}

\footnotetext{
\emph{2010 Mathematics Subject Classification:} Primary 42B25.

\emph{Key words and phrases:} maximal operator, exponential measure, Laguerre-type measure, weak type $(1,1)$ estimate,
	$L^p$-boundedness.
	
Research of the first-named and the fourth-named authors was supported by the National Science Centre of Poland
within the project OPUS 2013/09/B/ST1/02057. The first-named author was additionally supported by the grant
OPUS 2017/27/B/ST1/01623 from the same institution.
}

\title[Maximal operators]
	{On non-centered maximal operators related to\\ a non-doubling and non-radial exponential measure}

\author[A.\ Nowak]{Adam Nowak}
\address{Adam Nowak \newline
			Institute of Mathematics, Polish Academy of Sciences \newline
      \'Sniadeckich 8,
      00--656 Warszawa, Poland
      }
\email{anowak@impan.pl}

\author[E.\ Sasso]{Emanuela Sasso}
\address{Emanuela Sasso \newline
		Dipartimento di Matematica, Universit\`a di Genova \newline
		Via Dodecaneso 35,
		Genova 16146, Italy
			}
\email{sasso@dima.unige.it}

\author[P.\ Sj\"ogren]{Peter Sj\"ogren}
\address{Peter Sj\"ogren \newline
			Mathematical Sciences, University of Gothenburg,
Mathematical Sciences, Chalmers University of Technology \newline
SE-412 96 G\"oteborg, Sweden
      }
\email{peters@chalmers.se}

\author[K.\ Stempak]{Krzysztof Stempak}	
\address{Krzysztof Stempak     \newline
     Wroc\l{}aw, Poland}
\email{krzysztof.stempak@pwr.edu.pl}
	
\begin{abstract}
We investigate mapping properties of non-centered Hardy-Littlewood maximal operators related to the exponential
measure $d\mu(x) = \exp(-|x_1|-\ldots-|x_d|)dx$ in $\mathbb{R}^d$. The mean values are taken over
Euclidean balls or cubes ($\ell^{\infty}$ balls) or diamonds ($\ell^1$ balls).
Assuming that $d \ge 2$, in the cases of cubes and diamonds we prove the $L^p$-boundedness for $p > 1$ and
disprove the weak type $(1,1)$ estimate. The same is proved in the case of Euclidean balls, under the restriction
$d \le 4$ for the positive part.
\end{abstract}

\maketitle

\section{Introduction and statement of the results} \label{sec:intro}

Let $d \ge 1$.
Consider a metric measure space $(\mathbb{R}^d,\rho,d\eta)$, with a Borel measure $\eta$ which is non-negative,
non-trivial and locally finite. The associated non-centered Hardy-Littlewood maximal operator is defined by
$$
M_{\eta}f(x) = \sup_{B_{\rho} \ni x} \frac{1}{\eta(B_{\rho})} \int_{B_{\rho}} |f|\, d\eta, \qquad x \in \mathbb{R}^d,
$$
where the supremum is taken over all open metric balls related to $\rho$ that contain $x$
and have strictly positive measure $\eta$.
Here $f$ is any Borel measurable function on $\mathbb{R}^d$. The centered variant of $M_{\eta}$, denoted by $M_{\eta}^c$,
arises by restricting the supremum to balls centered at $x$. Clearly, $M_{\eta}^cf \le M_{\eta}f$.
Furthermore, $M_{\eta}$ is trivially bounded on $L^{\infty}$.

When $\eta$ is doubling, the two maximal operators are comparable and satisfy the weak type $(1,1)$ estimate with respect
to $\eta$. The latter follows from a Vitali type covering lemma, cf.\ \cite[Chapter 2]{Hei}.
Then, by interpolation, $M_{\eta}$ and $M_{\eta}^c$ are bounded on $L^p(d\eta)$ for $p > 1$.

It is also well known, at least for the Euclidean distance $\rho$, that (see e.g.\ \cite[p.\,44]{Duo})
whatever the measure $\eta$ is, $M_{\eta}^c$ is always
of weak type $(1,1)$ with respect to $\eta$, thus also bounded on $L^p(d\eta)$ for $p>1$. The former is a consequence
of the Besicovitch-Morse covering lemma.
In dimension one the larger uncentered operator $M_{\eta}$ behaves in the same way (see \cite[p.\,45]{Duo}), that is,
it is of weak type $(1,1)$ and bounded on $L^p(d\eta)$, $p>1$, independently of the doubling property of $\eta$.
However, this is no longer true in general in higher dimensions.

One of the authors \cite{Sj} proved that for $d = 2$ (implicitly $d \ge 2$) and either
the Euclidean or the $\ell^{\infty}$ distance $\rho$, and the Gaussian measure $\eta$, the
weak type $(1,1)$ estimate for $M_{\eta}$ fails. Nevertheless, as shown by Forzani et al.\ \cite{FSSU}, the
$L^p$-boundedness for $p > 1$ in this case still holds, though the convenient interpolation argument is inapplicable.
Similar results for certain classes of rotationally invariant measures $\eta$ were established
in \cite{InSo,SaWe,SjSo,Va}, among others.
It is interesting to point out that there are radial measures $\eta$ for which $M_{\eta}$ is not even
weak type $(p,p)$ for any $p<\infty$, see \cite{In,InSo,Va}.

It should be mentioned that so far non-centered Hardy-Littlewood maximal operators for non-doubling measures
were studied in various settings and spaces also different from $\mathbb{R}^d$, for example in the framework
of cusped manifolds \cite{Li1,Li2}.

The main aim of this paper is to study the maximal operator $M_{\eta}$ when the distance $\rho$ is the Euclidean one
and for the particular exponential measure $\eta=\mu$,
$$
d\mu(x) = \exp\big(-|x_1|-\ldots - |x_d|\big)\, dx.
$$
Our motivation is to provide both methods and results in this model case where the measure is non-doubling and non-radial,
since the literature seems to lack a basic example of this kind. Only recently H.-Q.\ Li, Y.\ Wu and one of the authors \cite{LiSj}
considered $M_{\eta}$ essentially for $d\eta(x)=e^{x_1}dx$ in $\mathbb{R}^d$.
In this case the measure, in contrast with $\mu$, is neither finite nor even in each variable. Moreover, it has a simple structure
that makes the associated analysis relatively straightforward.

The measure $\mu$ is not radial in the sense of the Euclidean distance, nevertheless it is radial with respect to
the $\ell^1$ metric. Thus one might wonder whether, perhaps, the maximal operator behaves better when $\rho$ is the
seemingly better matching $\ell^1$ distance. This issue led us to study $M_{\mu}$ also when $\rho$ is the $\ell^1$
metric, as well as in the opposite extreme case where $\rho$ is the $\ell^{\infty}$ metric.

Denote by $M_{\mu}^{\mathcal{B}}$, $M_{\mu}^{\mathcal{Q}}$, $M_{\mu}^{\mathcal{D}}$ the maximal operators $M_{\mu}$
with the underlying $\ell^2$ or $\ell^{\infty}$ or $\ell^1$ metric, respectively. Note that the metric balls in the
first case are just the Euclidean balls $\mathcal{B}$, and in the second case the Euclidean cubes $\mathcal{Q}$ with sides
parallel to the coordinate axes. The third case is geometrically somewhat more complicated, and we call the metric balls
\emph{diamonds} $\mathcal{D}$ in this situation. Notice that in dimension $d=2$ the diamonds are simply rotated cubes
(or actually squares), but there is no similar relation in higher dimensions.

Our main result is the following theorem. We strongly believe it will be an inspiration for considering $M_{\eta}$
with more general non-radial and non-doubling $\eta$, and for further research in the future.
\begin{bigthm} \label{thm:main}
Let $d \ge 2$.
\begin{itemize}
\item[(A)] None of the maximal operators $M_{\mu}^{\mathcal{B}}$, $M_{\mu}^{\mathcal{Q}}$, $M_{\mu}^{\mathcal{D}}$
	is of weak type $(1,1)$.
\item[(B)] The operators $M_{\mu}^{\mathcal{Q}}$ and $M_{\mu}^{\mathcal{D}}$ are bounded on $L^p(d\mu)$ for $p>1$.
					The same is true for $M_{\mu}^{\mathcal{B}}$, provided that $d \le 4$.
\end{itemize}
\end{bigthm}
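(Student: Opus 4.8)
We outline how one would prove Theorem~\ref{thm:main}; the negative part and the positive part require entirely different ideas, and within part~(B) the three geometries must be handled separately. Throughout, write $M$ for any one of $M_\mu^{\mathcal B},M_\mu^{\mathcal Q},M_\mu^{\mathcal D}$.

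The backbone of part~(B) is a local--global splitting $Mf\le M^{\mathrm{loc}}f+M^{\mathrm{glob}}f$, where $M^{\mathrm{loc}}$ is obtained by restricting to balls (cubes, diamonds) of radius $\le 1$ and $M^{\mathrm{glob}}$ to those of radius $\ge 1$. For the exponential measure the local part is easy and, crucially, \emph{uniform}: on any ball of radius $\le 1$ the function $|x_1|+\dots+|x_d|$ oscillates by at most a dimensional constant, so $\mu$ is comparable there to Lebesgue measure with a constant independent of the ball; hence $M^{\mathrm{loc}}$ is pointwise comparable to a local Lebesgue maximal operator, is of weak type $(1,1)$, and is bounded on all $L^p(d\mu)$, $p>1$. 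Everything then reduces to $M^{\mathrm{glob}}$.

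For cubes one can in fact bypass the splitting and compare directly with the strong maximal operator. Since $\mu=\mu_1\otimes\dots\otimes\mu_d$ with $d\mu_i(x_i)=e^{-|x_i|}\,dx_i$, and a cube is a special axis-parallel rectangle, the classical iterated estimate gives the pointwise bound $M_\mu^{\mathcal Q}f\le M_{\mu_1}^{(1)}\circ\dots\circ M_{\mu_d}^{(d)}f$, where $M_{\mu_i}^{(i)}$ is the one-dimensional non-centered maximal operator acting in the $i$-th variable with respect to $\mu_i$. Each factor is bounded on $L^p(\mathbb R,d\mu_i)$ for $p>1$ by the one-dimensional theory recalled in the introduction, and iterating one variable at a time (via Fubini) yields boundedness of $M_\mu^{\mathcal Q}$ on $L^p(d\mu)$. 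For diamonds this product trick is unavailable once $d\ge 3$, so one instead exploits that $\mu$ is $\ell^1$-radial. The key geometric point is that a diamond of radius $r\ge 1$ whose inner face lies on a level set $\{|x_1|+\dots+|x_d|=\sigma\}$ has $\mu$-measure of order $r^{d-1}e^{-\sigma}$: that face is a genuine flat $(d-1)$-simplex of size $\sim r$ sitting inside the level surface, not a single tangency point. Bookkeeping, for each $x$, which diamonds can contribute to $M^{\mathrm{glob}}f(x)$, organised by the $\ell^1$-level of the inner face, should let one dominate $M_\mu^{\mathcal D}$ by the one-dimensional weighted maximal operator attached to the push-forward measure $c_d\,s^{d-1}e^{-s}\,ds$ on $(0,\infty)$, which is again bounded on $L^p$ for $p>1$; some extra care is needed near the coordinate axes, where the level surfaces of $|\cdot|_1$ are pointed, but no restriction on $d$ appears.

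The Euclidean ball case of~(B) is the hard one, and I expect it to be the main obstacle. The decisive fact is that a ball of radius $r\ge 1$ tangent to a level hyperplane of $|x_1|+\dots+|x_d|$ at a point of $\ell^1$-norm $\sigma$ has $\mu$-measure only of order $r^{(d-1)/2}e^{-\sigma}$ --- it touches the level surface at a single point, so within the unit depth where the density is not yet exponentially small its cross-section has size only $\sim r^{(d-1)/2}$ rather than $\sim r^{d-1}$. Writing $M^{\mathrm{glob}}f(x)$ as a supremum over $(r,\sigma)$ of normalised $\mu$-averages and summing over dyadic radii, one dominates it by an integral operator whose kernel pits this slow growth $r^{(d-1)/2}$ of admissible ball measures against the $\sim r^{d-1}$-sized piece of level surface "visible" from $x$; a Schur-type argument should then close, and precisely this competition forces the constraint $d\le 4$. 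Making it rigorous --- in particular handling balls that straddle several coordinate hyperplanes, where the level surface acquires corners, and tracking the estimate that produces the threshold $d\le 4$ --- is the delicate core of the proof.

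For part~(A) I would construct, for each geometry, an explicit family $f=\chi_E$ with $E$ a thin set escaping to infinity, together with a "bad" region $\Omega$ on which $M\chi_E$ exceeds some $\lambda$ with $\lambda\,\mu(\Omega)/\mu(E)\to\infty$, in the spirit of Sj\"ogren's and of Forzani et al.'s treatment of the Gaussian (see \cite{Sj,FSSU}). The underlying phenomenon is the failure of the Vitali covering property for the non-centered families with respect to the non-doubling $\mu$: the sets witnessing $M\chi_E>\lambda$ must be covered by admissible balls/cubes/diamonds with \emph{unbounded} multiplicity over $E$, and one engineers the configuration --- tubes around the main diagonal for balls, and tubes along a coordinate axis, where the level surfaces of $|\cdot|_1$ are pointed, for cubes and diamonds --- so that this multiplicity genuinely diverges. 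I expect this to require a multi-scale arrangement, since naive single-scale attempts only produce a bounded ratio; making the divergence actually occur is the heart of part~(A).
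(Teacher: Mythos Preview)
Your outline for $M_\mu^{\mathcal Q}$ matches the paper's one-line argument. The remaining parts contain genuine gaps.

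For diamonds in (B), reducing to the one-dimensional maximal operator for the push-forward $s^{d-1}e^{-s}\,ds$ cannot work. A diamond $D$ of radius $r$ with bottom at level $b$ meets each slice $\{|x|_1=t\}$ in a set of $(d-1)$-volume of order $r^{d-1}$, while the full slice has volume of order $t^{d-1}$; these are not comparable, and two points $\xi,\xi'$ with $|\xi|_1=|\xi'|_1$ can see completely disjoint diamonds. The position within the slice must be tracked. The paper does this by showing (its Proposition~\ref{sophi}) that each slice $D_t$ is contained in a parallelepiped $P_t\ni\xi_t$ with edges along a fixed finite set of directions and
\[
\lambda_t(P_t)\lesssim\big[1+(t-b)\vee(\xi_0-b)\big]^{d-1}e^{b}\,\mu(D);
\]
the inner integral is then controlled by a $(d-1)$-dimensional strong-type maximal operator $\mathcal M_t$ on the slice, and only afterwards is the $t$-integration carried out against an exponential kernel. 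The one-dimensional step comes last, not first, and the hard work (most of the section) is the slice-wise parallelepiped construction with the correct measure bound, including near the boundary of $\mathbb R^d_+$.

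For balls in (B) you have misread the restriction $d\le4$. It is \emph{not} produced by a Schur-type competition between $r^{(d-1)/2}$ and $r^{d-1}$; the paper says explicitly (Remark~\ref{rem:restr}) that the result is believed true for all $d\ge2$ and that $d\le4$ reflects only the geometric difficulty of controlling truncated Euclidean balls near the faces, edges and vertices of the cone $C_+$. The actual proof rotates so that $\ell^1$-level sets become coordinate hyperplanes and then runs a slicing argument analogous to the diamond case; the substantive content is the construction, for $d=2,3,4$ separately, of parallelepipeds containing the slices $B_h\cap C_h$ with side lengths bounded by quantities $p_h^i$ computed from the geometry of the tangency of $B_0$ with $\partial C_0$. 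No Schur test enters.

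For (A) no multi-scale arrangement is needed; a single large scale suffices. For cubes and balls the paper places an approximate point mass at $2s\mathbf 1$ on the main diagonal (so for cubes the diagonal, not a coordinate axis) and translates the cube or ball by $y\perp\mathbf 1$ with $|y|_\infty<s$; since $|z_\infty|_1$ and $|z_2|_1$ are independent of $y$, all translates have the same $\mu$-measure, yet their union has $\mu$-measure larger by a factor $\sim s^{d-1}$ (resp.\ $\sim s^{(d-1)/2}$), which already violates weak $(1,1)$. For diamonds a single point mass at $(0,\dots,0,N)$ and diamonds elongated along the $x_d$-axis give a $\log N$ blow-up. Your Vitali-failure intuition is correct, but the divergence appears at one scale.
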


\begin{rem} \label{rem:restr}
The restriction $d \le 4$ in Theorem \ref{thm:main}(B), the case of $M_{\mu}^{\mathcal{B}}$, is caused by substantial
technical difficulties of geometrical nature in proving the result in dimensions $d=5$ and higher.
Nevertheless, we strongly believe that the result is true for any $d \ge 2$.
\end{rem}

When $d=1$, in view of what was said above, all the three maximal operators coincide and are of weak type $(1,1)$ and bounded on
$L^p(d\mu)$, $p>1$. Note that the latter readily implies Theorem \ref{thm:main}(B) for $M_{\mu}^{\mathcal{Q}}$. Indeed, due to
the product structure of the cubes $M_{\mu}^{\mathcal{Q}}$ can be controlled by a composition of the one-dimensional operators.

Theorem \ref{thm:main} reveals that the $L^p$ behavior of $M_{\mu}^{\mathcal{B}}$ and $M_{\mu}^{\mathcal{Q}}$ is
exactly the same as in case of their counterparts for the Gaussian measure \cite{Sj,FSSU}. In particular, we see that
the local doubling property (see Section \ref{sec:prep}), satisfied by $\mu$ but not by the Gaussian measure,
does not lead here to any improvement.

An interesting but technically quite complicated problem is to generalize Theorem \ref{thm:main} to Laguerre-type measures of the form
\begin{equation} \label{mua}
d\mu_{\alpha}(x) = \prod_{i=1}^d |x_i|^{\alpha_i} \exp\big(-|x_i|\big)\, dx,
\end{equation}
where $\alpha = (\alpha_1,\ldots,\alpha_d) \in (-1,\infty)^d$ is a fixed multi-parameter.
Clearly, the special choice $\alpha=(0,\ldots,0)$ gives $\mu$.
The restriction of the measure space $(\mathbb{R}^d,d\mu_{\alpha})$ to $(0,\infty)^d$
forms a natural environment for analysis related to the classical Laguerre operator.
Analysis of various objects in this context has already received considerable attention;
see for instance \cite{Di,NSS,Sa1,Sa2} and references given there.
Thus any knowledge about the non-centered Hardy-Littlewood maximal operator $M_{\mu_{\alpha}}$ or its
variants would be potentially useful.
For some negative results, see Remark \ref{rem:gen2} below, which says that $M_{\mu_{\alpha}}$ is not of weak type
$(1,1)$ when the underlying metric is either $\ell^2$ or $\ell^{\infty}$.

The remaining part of the paper is devoted to the proof of Theorem \ref{thm:main}.
The subsequent sections contain technical preliminaries, the proof of Theorem \ref{thm:main}(A) and
the proof of Theorem \ref{thm:main}(B), respectively.

\section{Technical preliminaries} \label{sec:prep}

Denote $\mathbb{R}^d_+ = (0,\infty)^d$, $d \ge 1$.
For brevity the restriction of $\mu$ to $\mathbb{R}^d_+$ will be denoted by the same symbol.
We write $|\cdot|_{q}$ for the $\ell^q$, $1 \le q \le \infty$, norm in $\mathbb{R}^d$,
$$
|x|_q = \bigg( \sum_{i=1}^d |x_i|^q \bigg)^{1/q} \quad \textrm{if} \quad q < \infty, \qquad
|x|_{\infty} = \max_{1\le i \le d} |x_i|.
$$
Of course, this norm generates
a metric $\rho_q$ both in $\mathbb{R}^d$ and $\mathbb{R}^d_+$. For $q=1,2,\infty$ we denote the families of open
balls in the metric measure spaces $(\mathbb{R}^d_+,\rho_q,d\mu)$ by $\mathcal{D}_+$, $\mathcal{B}_+$, $\mathcal{Q}_+$,
respectively. Notice that these are exactly diamonds, Euclidean balls and cubes, respectively, centered in and intersected with
$\mathbb{R}^d_+$.

Bring in the non-centered Hardy-Littlewood maximal operator
$$
M_{\mu}^{\mathcal{B}_+}f(x) = \sup_{x \in B \in \mathcal{B}_+} \frac{1}{\mu(B)} \int_{B} |f|\, d\mu, \qquad
	x \in \mathbb{R}^d_+,
$$
and analogously $M_{\mu}^{\mathcal{Q}_+}$ and $M_{\mu}^{\mathcal{D}_+}$.
The following elementary result shows that proving Theorem \ref{thm:main} can be reduced to a
similar analysis for $M_{\mu}^{\mathcal{B}_+}$, $M_{\mu}^{\mathcal{Q}_+}$ and $M_{\mu}^{\mathcal{D}_+}$.
\begin{propo} \label{prop:red}
Let $d \ge 1$ and $p > 1$ be fixed.
The operator $M_{\mu}^{\mathcal{B}}$ is bounded on $L^p(\mathbb{R}^d,d\mu)$ (is weak type $(1,1)$ with respect to
$(\mathbb{R}^d,d\mu)$) if and only if $M_{\mu}^{\mathcal{B}_+}$ is bounded on $L^p(\mathbb{R}^d_+,d\mu)$
(is weak type $(1,1)$ with respect to $(\mathbb{R}^d_+,d\mu)$).

The same relations hold between $M_{\mu}^{\mathcal{Q}}$ and $M_{\mu}^{\mathcal{Q}_+}$, as well as between
$M_{\mu}^{\mathcal{D}}$ and $M_{\mu}^{\mathcal{D}_+}$.
\end{propo}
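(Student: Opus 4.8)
The plan is to exploit the symmetry of the situation under sign changes of the coordinates. The measure $\mu$ is invariant under each of the $2^d$ reflections $x \mapsto (\pm x_1, \ldots, \pm x_d)$, and the $\ell^q$ metrics $\rho_q$ are likewise invariant, so the orthant $\mathbb{R}^d_+$ is, up to the boundary coordinate hyperplanes (which are $\mu$-null), a fundamental domain for this action. First I would observe that $\mathbb{R}^d$ decomposes up to a $\mu$-null set into the $2^d$ open orthants $\mathbb{R}^d_{\varepsilon} = \{x : \varepsilon_i x_i > 0,\ i=1,\ldots,d\}$ indexed by $\varepsilon \in \{-1,1\}^d$, and that each reflection $T_{\varepsilon}$ is a $\mu$-measure-preserving isometry of each of the three metrics onto itself carrying $\mathbb{R}^d_+$ onto $\mathbb{R}^d_{\varepsilon}$ and the ball family $\mathcal{B}_+$ (resp.\ $\mathcal{Q}_+$, $\mathcal{D}_+$) onto the corresponding family of balls centered in and intersected with $\mathbb{R}^d_{\varepsilon}$.

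Next I would establish the easy direction: boundedness (or weak type $(1,1)$) of $M_{\mu}^{\mathcal{B}}$ on $\mathbb{R}^d$ implies the same for $M_{\mu}^{\mathcal{B}_+}$. This is immediate because for $f$ supported in $\mathbb{R}^d_+$ and $x \in \mathbb{R}^d_+$ one has $M_{\mu}^{\mathcal{B}_+}f(x) \le M_{\mu}^{\mathcal{B}}f(x)$, since every ball in $\mathcal{B}_+$ is of the form $B \cap \mathbb{R}^d_+$ for a Euclidean ball $B$, and restricting the integral of $|f|$ to $\mathbb{R}^d_+$ changes nothing while $\mu(B \cap \mathbb{R}^d_+) \le \mu(B)$ only increases the averaging constant. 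Hence the averages over $\mathcal{B}_+$-balls are dominated by those over the containing Euclidean balls, and the $L^p(\mathbb{R}^d_+,d\mu)$ (resp.\ weak $(1,1)$) bound follows by restriction.

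For the converse, the point is to control $M_{\mu}^{\mathcal{B}}f(x)$ at a point $x \in \mathbb{R}^d_{\varepsilon}$ by a sum of reflected local maximal functions. Given a Euclidean ball $B \ni x$, I would split $\int_B |f| \, d\mu = \sum_{\delta} \int_{B \cap \mathbb{R}^d_{\delta}} |f|\, d\mu$. The set $B \cap \mathbb{R}^d_{\delta}$ need not be a ball in $\mathcal{B}_+$-type family, but it is contained in a ball of that family: reflecting $B$ by $T_{\delta}$ gives a Euclidean ball $T_{\delta}B$, and $T_{\delta}(B \cap \mathbb{R}^d_{\delta}) = T_{\delta}B \cap \mathbb{R}^d_+ \in \mathcal{B}_+$. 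The crucial quantitative input is that $\mu(B \cap \mathbb{R}^d_{\delta}) \gtrsim \mu(B)$ for at least the orthant $\mathbb{R}^d_{\varepsilon}$ containing the center — more precisely, since $x \in \mathbb{R}^d_{\varepsilon}$ lies in the ball, a fixed fraction of $B$ lies in $\mathbb{R}^d_{\varepsilon}$ and one shows $\mu(T_{\delta}B \cap \mathbb{R}^d_+) = \mu(B \cap \mathbb{R}^d_{\delta})$, and by a direct comparison of the exponential density across a coordinate hyperplane (the density at $T_{\delta}y$ differs from that at $y$ by a bounded factor when $y$ and $T_{\delta}y$ are both within bounded $\ell^1$-distance, and one handles the far part by the finiteness of $\mu$) that $\mu(B) \lesssim \sum_\delta \mu(B \cap \mathbb{R}^d_\delta) \lesssim 2^d \mu(B \cap \mathbb{R}^d_\varepsilon)$. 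Thus
\[
\frac{1}{\mu(B)} \int_B |f|\, d\mu \lesssim \sum_{\delta \in \{-1,1\}^d} \frac{1}{\mu(T_{\delta}B \cap \mathbb{R}^d_+)} \int_{T_{\delta}B \cap \mathbb{R}^d_+} |f \circ T_{\delta}|\, d\mu \le \sum_{\delta} M_{\mu}^{\mathcal{B}_+}(f \circ T_{\delta})(T_{\delta}x),
\]
and taking the supremum over $B \ni x$ and using that each $T_{\delta}$ preserves $L^p(d\mu)$ norms and weak $(1,1)$ quasinorms, the bound for $M_{\mu}^{\mathcal{B}_+}$ transfers back to $M_{\mu}^{\mathcal{B}}$. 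The same argument applies verbatim with cubes $\mathcal{Q}$ or diamonds $\mathcal{D}$ in place of Euclidean balls, since cubes with axis-parallel sides and $\ell^1$-balls are also invariant under the reflections $T_{\delta}$.

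The main obstacle is the measure comparison $\mu(B) \lesssim \mu(B \cap \mathbb{R}^d_{\varepsilon})$ with a constant independent of $B$. For balls of radius bounded by a fixed constant this follows from the local doubling-type behaviour of $\mu$ together with the observation that a Euclidean ball, cube, or diamond containing a point of $\mathbb{R}^d_{\varepsilon}$ has a substantial portion of its volume inside $\mathbb{R}^d_{\varepsilon}$ (a purely geometric fact uniform in the position and radius, since the fraction of an $\ell^q$-ball on one side of a hyperplane through an interior point is bounded below); one then uses that the exponential density varies by at most a fixed factor over any set of bounded diameter. For large balls, which here have $\mu$-measure comparable to $\mu(\mathbb{R}^d) < \infty$ up to absolute constants, one argues directly that $\mu(B \cap \mathbb{R}^d_{\varepsilon})$ is bounded below by a positive absolute constant once $B$ is large enough to contain, say, a fixed unit cube inside $\mathbb{R}^d_{\varepsilon}$, which it must when its center is in $\mathbb{R}^d_{\varepsilon}$ and its radius exceeds a threshold. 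Stitching the two regimes together gives the uniform comparison and completes the reduction; the remaining details are the routine geometric estimates just indicated.
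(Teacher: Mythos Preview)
Your overall plan---exploit the reflection symmetries $T_\delta$---is the same as the paper's, but the execution has genuine gaps in both directions. In your ``easy'' direction the inequality is reversed: for $f$ supported in $\mathbb{R}^d_+$ the numerators $\int_{B_+}|f|\,d\mu$ and $\int_B|f|\,d\mu$ agree, and $\mu(B_+)\le\mu(B)$ makes the $B_+$-average \emph{larger}, not smaller; so $M_\mu^{\mathcal{B}_+}f\le M_\mu^{\mathcal{B}}f$ does not follow. In the other direction your ``crucial quantitative input'' $\mu(B)\lesssim\mu(B\cap\mathbb{R}^d_\varepsilon)$ is false: with $d=1$, $x=\epsilon\in\mathbb{R}_+$ small and $B=(-N,2\epsilon)$, one has $\mu(B)\approx 2$ while $\mu(B\cap\mathbb{R}_+)\approx 2\epsilon$. (Your large-ball argument assumes the center of $B$ lies in $\mathbb{R}^d_\varepsilon$, which it need not.) Moreover, in your displayed chain $T_\delta B\cap\mathbb{R}^d_+$ is in $\mathcal{B}_+$ only if $T_\delta m\in\mathbb{R}^d_+$, and $T_\delta x\notin\mathbb{R}^d_+$ for $\delta\ne\varepsilon$, so the evaluation $M_\mu^{\mathcal{B}_+}(\,\cdot\,)(T_\delta x)$ is undefined for those terms.

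The missing idea is the one the paper sketches: symmetrize the function first. Replace $|f|$ by the even majorant $F=\sum_\delta|f|\circ T_\delta$ (for the converse direction, replace $f_+$ by its even extension). For an even $F$ the average over $B$ equals that over $T_\delta B$, so after a single reflection one may assume the center $c$ of $B$ lies in $\overline{\mathbb{R}^d_+}$; one checks that then $x\in\mathbb{R}^d_+$ still lies in $B$. Now the elementary folding inclusion $T_\delta B(c,r)\cap\mathbb{R}^d_+\subset B(c,r)\cap\mathbb{R}^d_+=:B_+$ (valid precisely because each $c_i\ge 0$) yields both $\mu(B)\le 2^d\mu(B_+)$ and $\int_B F\,d\mu\le 2^d\int_{B_+}F\,d\mu$, so the $B$-average and the $B_+$-average of $F$ differ by at most a factor $2^d$. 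Hence $M_\mu^{\mathcal{B}}F\simeq M_\mu^{\mathcal{B}_+}F|_{\mathbb{R}^d_+}$ on $\mathbb{R}^d_+$, and the proposition follows without any separate small-radius/large-radius analysis or local-doubling input.
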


\begin{proof}
This is a consequence of the symmetries involved.
Use either the even (more precisely even with respect to each coordinate axis) extension to
$\mathbb{R}^d$ of $f_+$ on $\mathbb{R}^d_+$ or,
for the other implication, the decomposition of $f$ on $\mathbb{R}^d$ into its symmetric components which are either even or
odd with respect to each coordinate axis.
\end{proof}

Thus, from now on, we focus on the restricted operators
$M_{\mu}^{\mathcal{B}_+}$, $M_{\mu}^{\mathcal{Q}_+}$ and $M_{\mu}^{\mathcal{D}_+}$.
This is a crucial reduction from a technical point of view, since in $\mathbb{R}_+^d$ the measure $\mu$ has a simpler
analytic structure than in $\mathbb{R}^d$ (no absolute values involved).
From now on $\mu$ will denote the restriction of the measure with density $\exp(-|x|_1)$ to $\mathbb R^d_+$.

In what follows we shall write $X \lesssim Y$ with $Y>0$ to indicate that $X \le C Y$ with a constant $C>0$
depending only on the dimension and on $p$ in the proofs of $L^p$ estimates, and also on $\alpha$ in
Remarks \ref{rem:gen} and \ref{rem:gen2}. We write $X\simeq Y$ when simultaneously $X \lesssim Y$ and $Y \lesssim X$.

We will occasionally refer to the strong maximal operator in Euclidean space with Lebesgue measure. It is defined as
\begin{equation} \label{strong}
 M_{\mathrm{str}} f(x) = \sup \frac 1{|R|}\, \int_R |f(y)|\,dy,
\end{equation}
where the supremum is taken over all rectangles with edges parallel with the coordinate axes and containing $x$.
It is well known that $M_{\mathrm{str}}$ is bounded on $L^p(dx)$ for $1<p \le \infty$,
as seen by iterating the one-dimensional estimate.

We shall use the following notation for $\ell^1$, $\ell^2$ and $\ell^{\infty}$ balls in $\mathbb{R}^d_+$.
For $x \in \mathbb{R}^d_+$ and $r > 0$
\begin{align*}
D(x,r) & = \big\{ y \in \mathbb{R}^d_+ : |x-y|_1 < r \big\}, \\
B(x,r) & = \big\{ y \in \mathbb{R}^d_+ : |x-y|_2 < r \big\}, \\
Q(x,r) & = \big\{ y \in \mathbb{R}^d_+ : |x-y|_{\infty} < r \big\}.
\end{align*}
Euclidean balls in all of $\mathbb{R}^d$ will be written as
$$
\mathbf{B}(x,r) = \big\{ y \in \mathbb{R}^d : |x-y|_2 < r \big\}.
$$
Further, we denote
\begin{align*}
\mathbf{1} & = (1,\ldots,1) \in \mathbb{R}^d_+, \\
\Sigma^{d-1}_+ & = \big\{ x \in \mathbb{R}^d_+ : |x|_1 = 1\big\},\\
a \vee b & = \max(a,b),  \\
a \wedge b & = \min(a,b).
\end{align*}

The measure $\mu$ is not doubling in $(\mathbb{R}^d_+,\rho_q,d\mu)$, $q=1,2,\infty$; nevertheless it is locally doubling
in the following sense.
\begin{lemma} \label{lem:locd}
Let $d \ge 1$. Given $R>0$, there exists a constant $C_R>0$ such that
\begin{equation} \label{locd}
\mu\big(Q(x,2r)\big) \le C_R \; \mu\big( Q(x,r) \big), \qquad x \in \mathbb{R}^d_+, \quad 0 < r \le R.
\end{equation}
The same holds if $Q$ above is replaced either by $B$ or by $D$.
\end{lemma}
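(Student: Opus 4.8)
The plan is to reduce the statement to the case of cubes, and then to prove the cube case by comparing $\mu$ with Lebesgue measure on balls of radius at most $R$, using the product structure of $\mu$ on $\mathbb{R}^d_+$.

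First I would record the inclusions coming from the equivalence of the $\ell^q$ norms. Since $|\cdot|_\infty \le |\cdot|_1$, $|\cdot|_\infty \le |\cdot|_2$, $|\cdot|_1 \le d\,|\cdot|_\infty$ and $|\cdot|_2 \le \sqrt{d}\,|\cdot|_\infty$, and since the definitions already incorporate the intersection with $\mathbb{R}^d_+$,
$$
Q\big(x,\tfrac rd\big) \subseteq D(x,r) \subseteq Q(x,r), \qquad Q\big(x,\tfrac rd\big) \subseteq B(x,r) \subseteq Q(x,r), \qquad x \in \mathbb{R}^d_+,\ r>0.
$$
Assume \eqref{locd} is proved for $Q$; since $R$ is arbitrary there, we may then use it with any radius bound $\ge R$. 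Passing from radius $r/d$ to radius $2r$ requires iterating it $k := \lceil \log_2(2d)\rceil$ times, and with radius bound $2R$ all the intermediate scales remain admissible, so the displayed inclusions yield \eqref{locd} for $D$ and for $B$ with a constant that is a power of the $Q$-constant and depends only on $d$ and $R$. Hence it is enough to treat $Q$.

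For cubes I would use that $d\mu(y) = \prod_{i=1}^d e^{-y_i}\,dy_i$ on $\mathbb{R}^d_+$ and that $|y|_1 = \sum_{i=1}^d y_i$ there. For $y \in Q(x,r)$ one has $\big|\,|y|_1 - |x|_1\,\big| \le \sum_{i=1}^d |y_i - x_i| < dr$, so $e^{-dr}e^{-|x|_1} \le e^{-|y|_1} \le e^{dr}e^{-|x|_1}$ on $Q(x,r)$. Thus, with $|\cdot|$ denoting Lebesgue measure, for $0 < r \le R$,
$$
\mu\big(Q(x,2r)\big) \le e^{2dR}\,e^{-|x|_1}\,\big|Q(x,2r)\big|, \qquad \mu\big(Q(x,r)\big) \ge e^{-dR}\,e^{-|x|_1}\,\big|Q(x,r)\big|.
$$
It then remains to check the purely Euclidean doubling $\big|Q(x,2r)\big| \le 2^d\,\big|Q(x,r)\big|$. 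Writing $Q(x,r) = \prod_{i=1}^d\big((x_i-r)\vee 0,\ x_i+r\big)$, this reduces to the one-variable fact $\min(4r,\,x_i+2r) \le 2\min(2r,\,x_i+r)$, verified at once in each of the cases $x_i \ge 2r$, $r \le x_i < 2r$, $x_i < r$. Combining the two displays gives \eqref{locd} for $Q$ with $C_R = 2^d e^{3dR}$.

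I do not expect a genuinely hard step; the argument is elementary throughout. The only points needing care are keeping the constant uniform in $x$ up to the boundary of $\mathbb{R}^d_+$ --- exactly where the truncation $(x_i-r)\vee 0$ is active --- and, in the reduction, enlarging the radius to $2R$ so that the scales produced by the iteration remain admissible. An equally short alternative, bypassing Lebesgue measure, is to use the product structure to reduce directly to the one-dimensional estimate
$$
e^{-((s-2r)\vee 0)} - e^{-(s+2r)} \;\le\; c_R\Big( e^{-((s-r)\vee 0)} - e^{-(s+r)} \Big), \qquad s>0,\ 0<r\le R,
$$
and to settle it by the same case split together with $1-e^{-t}\le t$ and $1-e^{-t}\ge t e^{-t}$.
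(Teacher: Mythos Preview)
Your argument is correct and rests on the same observation as the paper's proof: on any of the balls considered the density $e^{-|y|_1}$ varies only by a factor depending on $R$, so $\mu$ is comparable to Lebesgue measure there, and Lebesgue measure on $\mathbb{R}^d_+$ is doubling for each of the three shapes. The paper states exactly this in one line; your reduction to cubes via the $\ell^q$ norm inclusions is a correct but unnecessary detour, since the same density-plus-Lebesgue-doubling argument applies directly to $B$ and $D$ as well (for the Lebesgue doubling of the truncated ball or diamond, note that $B(x,r)$ always contains $\mathbf{B}(x,r)\cap(x+\mathbb{R}^d_+)$, hence has at least $2^{-d}$ of the full Lebesgue measure).
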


\begin{proof}
This is elementary, since in any of the balls considered the density of $\mu$ varies at most by a factor depending only on $R$.
\end{proof}

Let $d \ge 1$.
We now give sharp estimates for the measure of large cubes, balls and diamonds provided that they are disjoint with
the boundary of $\mathbb{R}^d_+$.
Consider a ball in one of the three metrics  $\ell^\infty ,\:\ell^2,\:  \ell^1 $
with center  $x \in \mathbb{R}^d_+$ and radius $r$ satisfying $1 \le r \le \min_{1 \le i \le d} x_i$.
We select a point $z_q = z_q(x,r)$ in the closure of this ball where $|\cdot|_1$ is minimal,
i.e., the density of $\mu$  is maximal, as follows:
\begin{align*}
z_\infty & = z_{\infty}(x,r) =  x-r\,\mathbf{1},\\
z_2 &= z_2(x,r) = x-\frac{r}{\sqrt{d}}\,\mathbf{1},\\
z_1 &= z_1(x,r) = x-\frac r {d}\,\mathbf{1}.
\end{align*}
Notice that $z_\infty$ and $z_2$ are unique points with this minimizing property, but $z_1$ is not.

\begin{lemma} \label{lem:balls}
Let  $x \in \mathbb{R}^d_+$ and $ 1 \le r \le  x_i,\: i = 1,\dots, d$. Then the balls $Q(x,r)$, $B(x,r)$ and $D(x,r)$
are contained in $\mathbb{R}^d_+$ and
\begin{align*}
\mu\big( Q(x,r)\big) & \simeq \exp(-|z_\infty|_1), \\
\mu\big( B(x,r)\big) & \simeq \exp(-|z_2|_1)\, r^{(d-1)/2}, \\
\mu\big( D(x,r)\big) & \simeq \exp(-|z_1|_1)\, r^{d-1}.
\end{align*}
The implicit constants here depend only on $d$.
\end{lemma}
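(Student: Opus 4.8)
The plan is to reduce all three estimates to a single auxiliary one-dimensional integral, and then to dispatch the three geometries by elementary, essentially separate computations. First, the containment assertion is immediate: since $1 \le r \le x_i$ for every $i$, already the cube $Q(x,r)$ is contained in $\mathbb{R}^d_+$, and as $|\cdot|_\infty \le |\cdot|_2 \le |\cdot|_1$ one has $D(x,r) \subseteq B(x,r) \subseteq Q(x,r)$. For each of the three balls (centred at $x$ with radius $r$) I would substitute $y = x + w$; since $|w_i| \le |w|_1 < r \le x_i$, all coordinates of $x+w$ stay positive, so $|y|_1 = |x|_1 + \langle \mathbf{1}, w\rangle$, whence, denoting by $\mathcal{O}_q(r)$ the $\ell^q$-ball of radius $r$ about the origin,
\[
\mu\big(\text{ball}\big) = e^{-|x|_1} \int_{\mathcal{O}_q(r)} e^{-\langle \mathbf{1}, w\rangle}\, dw .
\]
Since $z_q$ has positive coordinates and $|z_q|_1 = |x|_1 - \lambda_q r$ with $\lambda_\infty = d$, $\lambda_2 = \sqrt{d}$, $\lambda_1 = 1$, everything reduces to proving, for $r \ge 1$,
\[
\int_{\mathcal{O}_q(r)} e^{-\langle \mathbf{1}, w\rangle}\, dw \; \simeq \; e^{\lambda_q r}\, r^{\gamma_q}, \qquad \gamma_\infty = 0, \quad \gamma_2 = \tfrac{d-1}{2}, \quad \gamma_1 = d-1 .
\]

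For the cube $\mathcal{O}_\infty(r) = (-r,r)^d$ the integral factorises as $(e^r - e^{-r})^d = e^{dr}(1 - e^{-2r})^d \simeq e^{dr}$, using $r \ge 1$. For the diamond I would decompose $\mathcal{O}_1(r)$ into its $2^d$ orthant pieces; on the piece with sign pattern $\varepsilon \in \{-1,1\}^d$ the map $u_i = \varepsilon_i w_i$ is measure preserving onto $\Delta_r := \{u \in (0,\infty)^d : |u|_1 < r\}$ and turns $\langle \mathbf{1}, w\rangle$ into $\sum_i \varepsilon_i u_i$, so summing over $\varepsilon$ by distributivity,
\[
\int_{\mathcal{O}_1(r)} e^{-\langle \mathbf{1}, w\rangle}\, dw = \int_{\Delta_r} \prod_{i=1}^d \big(e^{u_i} + e^{-u_i}\big)\, du .
\]
Since $e^{|u|_1} \le \prod_i (e^{u_i}+e^{-u_i}) \le 2^d\, e^{|u|_1}$ on $\Delta_r$, it suffices to evaluate $\int_{\Delta_r} e^{|u|_1}\, du$, which by the coarea identity $\int_{\Delta_r} f(|u|_1)\, du = \frac1{(d-1)!}\int_0^r f(s)\, s^{d-1}\, ds$ equals $\frac1{(d-1)!}\int_0^r e^s s^{d-1}\, ds \simeq r^{d-1} e^r$ for $r \ge 1$. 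This gives the case $q = 1$.

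For the Euclidean ball I would first apply an orthogonal transformation sending $\mathbf{1}$ to $\sqrt{d}\, e_1$; it preserves $\mathcal{O}_2(r)$ and Lebesgue measure and turns $\langle \mathbf{1}, w\rangle$ into $\sqrt{d}\, v_1$ in the new coordinate $v$, so slicing by the hyperplanes $\{v_1 = t\}$ gives
\[
\int_{\mathcal{O}_2(r)} e^{-\langle \mathbf{1}, w\rangle}\, dw = \omega_{d-1} \int_{-r}^{r} e^{-\sqrt{d}\, t}\,(r^2 - t^2)^{(d-1)/2}\, dt ,
\]
with $\omega_{d-1}$ the volume of the unit ball in $\mathbb{R}^{d-1}$. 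The substitution $t = s - r$ recasts the right-hand side as $\omega_{d-1}\, e^{\sqrt{d}\, r} \int_0^{2r} e^{-\sqrt{d}\, s}\, s^{(d-1)/2}(2r - s)^{(d-1)/2}\, ds$; bounding $(2r - s)^{(d-1)/2} \le (2r)^{(d-1)/2}$ and extending the integral to $(0,\infty)$ yields the upper estimate $\lesssim e^{\sqrt{d}\, r} r^{(d-1)/2}$, while restricting to $s \in (0,r)$, where $(2r-s)^{(d-1)/2} \ge r^{(d-1)/2}$ and $\int_0^r e^{-\sqrt{d}\, s} s^{(d-1)/2}\, ds \ge \int_0^1 e^{-\sqrt{d}\, s} s^{(d-1)/2}\, ds > 0$ uniformly in $r \ge 1$, yields the matching lower estimate. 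This settles $q = 2$ and completes the argument.

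The only genuinely delicate step is this last, Laplace-type, estimate for the Euclidean ball: the integrand degenerates at the endpoint $t = -r$, and the hypothesis $r \ge 1$ must be used both to control the exponential weight over the pertinent range and, above all, to retain the sharp power $r^{(d-1)/2}$ in the lower bound rather than a weaker one. The orthant decomposition for the diamond and the simplex coarea identity are routine, and one checks without difficulty that all implicit constants depend only on $d$.
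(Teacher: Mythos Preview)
Your argument is correct. For the cube and the Euclidean ball it is essentially the paper's proof in different packaging: the paper writes points of $B(x,r)$ as $z_2+\frac{s}{\sqrt d}\mathbf 1+y$ with $y\perp\mathbf 1$ and integrates first in $y$ and then in $s$, arriving at precisely your integral $\int_0^{2r}e^{-\sqrt d\,s}(2rs-s^2)^{(d-1)/2}\,ds$, which it estimates above and below exactly as you do. The one genuine difference is the diamond: the paper slices $D(x,r)$ directly by the hyperplanes $\{|y|_1=|x|_1-r+s\}$, bounds the cross-sectional diameter by $\mathcal O(r)$ for the upper estimate, and for the lower estimate exhibits an explicit subset of the form $\{x-\frac r d\mathbf 1+\frac s d\mathbf 1+y:\,0<s<r/2,\,y\perp\mathbf 1,\,|y_i|<\frac r{2d}\}$. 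Your orthant-folding, which converts the problem to $\int_{\Delta_r}\prod_i(e^{u_i}+e^{-u_i})\,du\simeq\int_{\Delta_r}e^{|u|_1}\,du$ and then invokes the simplex coarea identity, is a tidier and more algebraic alternative that avoids constructing an ad hoc subset; the paper's route is more hands-on but equally short. Neither approach has a real advantage over the other.
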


\begin{proof}
The inclusions follow, since if $y$ is in one of the balls, then $|y_i-x_i| <r$ for each $i$, so that $y_i>0$.

The estimate for cubes is straightforward. One has
\begin{align*}
\mu\big( Q(x,r) \big) & = \int_{Q(x,r)} \exp(-|y|_1)\, dy =
	\prod_{i=1}^d \Big( e^{-(x_i-r)} - e^{-(x_i+r)} \Big) \\
& \simeq \exp(-|x|_1) e^{rd} = \exp(-|z_\infty|_1).
\end{align*}

To deal with the case of Euclidean balls, observe that any point in $B(x,r)$ can be written as
$z_2 + \frac s {\sqrt{d}}\,\mathbf{1} + y$, where $s>0$ and $y\perp\mathbf{1}$. Using the expression for $z_2$,
we see that this point is in  $B(x,r)$ precisely when $(r-s)^2 + |y|^2 < r^2$ or equivalently $|y|< \sqrt{2rs-s^2}$ and $0<s<2r$.
We now integrate in $y$ in a hyperplane orthogonal to $\mathbf{1}$ and then in $s$, taking the density of $\mu$ into account.
For the upper estimate, we simply write
\begin{align*}
 \mu\big( B(x,r)\big)  & \lesssim \int_0^{2r} \exp\big(-|z_2|_1 -\sqrt{d}\,s\big)\,(2rs-s^2)^{(d-1)/2}\,ds \\
	& \lesssim  \exp(-|z_2|_1)\, \int_0^{\infty} \exp\big(-\sqrt{d}\,s\big)\,(rs)^{(d-1)/2}\,ds   \simeq   \exp(-|z_2|_1)\, r^{(d-1)/2}.
\end{align*}
To obtain the lower estimate, we observe that $2rs-s^2 > rs$ for $0<s<r$ and argue similarly.
Since $r \ge 1$, we get
\begin{equation*}
 \mu\big( B(x,r)\big)  \gtrsim \int_0^{r} \exp\big(-|z_2|_1 -\sqrt{d}\,s\big)\,(rs)^{(d-1)/2}\,ds
	\simeq   \exp(-|z_2|_1)\, r^{(d-1)/2}.
\end{equation*}

As for the diamonds, note that $|z_1|_1 = |x|_1-r$.
For $s>0$ the diameter of the intersection of ${D(x,r)}$ with the hyperplane $\{y: |y|_1 = |x|_1-r +s\}$ is $\mathcal O(r)$.
Integrating as before, we obtain the upper estimate.

On the other hand, consider the following set
\begin{equation*}
\left\{x-\frac{r}{d}\,\mathbf{1}+\frac s {d}\,\mathbf{1}+y:\; 0<s<r/2,\;\; y\perp\mathbf{1},\; \; |y_i|<\frac{r}{2d}
	\;\; \mathrm{for} \;\; i=1,\dots,d \right\}.
\end{equation*}
The $\ell^1$ distance from $x$ to a point in this set is
\begin{equation*}
\sum_{i=1}^d \left|-\frac r {d}+\frac s {d}+y_i\right| =  \sum_{i=1}^d \left(\frac r {d}-\frac s {d}-y_i\right)
 \le  {r-s} < r.
\end{equation*}
Thus ${D(x,r)}$ contains the set, and the lower estimate follows by integration.
\end{proof}

\begin{rem} \label{rem:gen}
Lemmas \ref{lem:locd} and \ref{lem:balls} can be generalized to the space $(\mathbb{R}^d_+,\rho_q,d\mu_{\alpha})$, where
$q \in \{1,2,\infty\}$ and $\mu_{\alpha}$ is the restriction of the measure defined in \eqref{mua}.
This means that $\mu_{\alpha}$ is locally doubling (but not doubling) in the context of this space.
Moreover,
$$
\mu_{\alpha}\big( E_q(x,r) \big) \simeq x_1^{\alpha_1}\cdot \ldots \cdot x_d^{\alpha_d} \exp(-|x|_1)\,
	r^{(d-1)/q} e^{rd^{1-1/q}}
$$
uniformly in $x \in \mathbb{R}^d_+$ and $1 \le r \le \min_{1\le i \le d} x_i$; here $E_q(x,r)$ is the open ball
in $(\mathbb{R}^d_+,\rho_q)$ centered at $x$ and of radius $r$.

Proposition \ref{prop:red} can also be generalized in a similar spirit.
\end{rem}

We now pass to the proof of Theorem \ref{thm:main}. It is worth indicating that the
radiality of $\mu$ with respect to the $\ell^1$ norm will be heavily exploited, often implicitly,
throughout our reasonings.

\section{Proof of {Theorem \ref{thm:main}}(A)} \label{sec:proofa}

In this section we prove Theorem \ref{thm:main}(A) working with the operators restricted to $\mathbb{R}^d_+$,
see Proposition \ref{prop:red}.
The cases of $M_{\mu}^{\mathcal{Q}_+}$ and $M_{\mu}^{\mathcal{B}_+}$
will be treated together, since the argument is essentially the same. This argument has the advantage that it can
be rather easily generalized to cover $M_{\mu_{\alpha}}^{\mathcal{Q}_+}$ and $M_{\mu_{\alpha}}^{\mathcal{B}_+}$
(analogues of $M_{\mu}^{\mathcal{Q}_+}$ and $M_{\mu}^{\mathcal{B}_+}$ for the measure $\mu_{\alpha}$),
see Remark \ref{rem:gen2} below.
Unfortunately, this argument does not apply to $M_{\mu}^{\mathcal{D}_+}$ since it uses essentially the non-radiality
of the measure with respect to the norm. Therefore, we give a different argument for $M_{\mu}^{\mathcal{D}_+}$,
but the question of its generalization to $M_{\mu_{\alpha}}^{\mathcal{D}_+}$ seems to be technically difficult
and remains open.

\subsection*{Proof of Theorem \ref{thm:main}(A), the cases of $\mathbf{M_{\mu}^{\mathcal{Q}}}$
and $\mathbf{M_{\mu}^{\mathcal{B}}}$}
We first consider the case $d=2$ and then indicate the changes needed for $d\ge3$. We begin with the operator
$M_{\mu}^{\mathcal{Q}_+}$. Let $Q_s$, $s\ge1$, denote the square centered at $(s,s)$ and of `radius' $s\slash2$.
Further, let $\widehat{Q_s}$ be the union of all squares obtained by moving $Q_s$ (or rather its center) along the
line segment $\Delta_s$ which is the intersection of $\frac12Q_s$ with the line $x+y=2s$, see Figure~\ref{fig1}.
\begin{figure}[ht]
\includegraphics[height=0.5\textwidth]{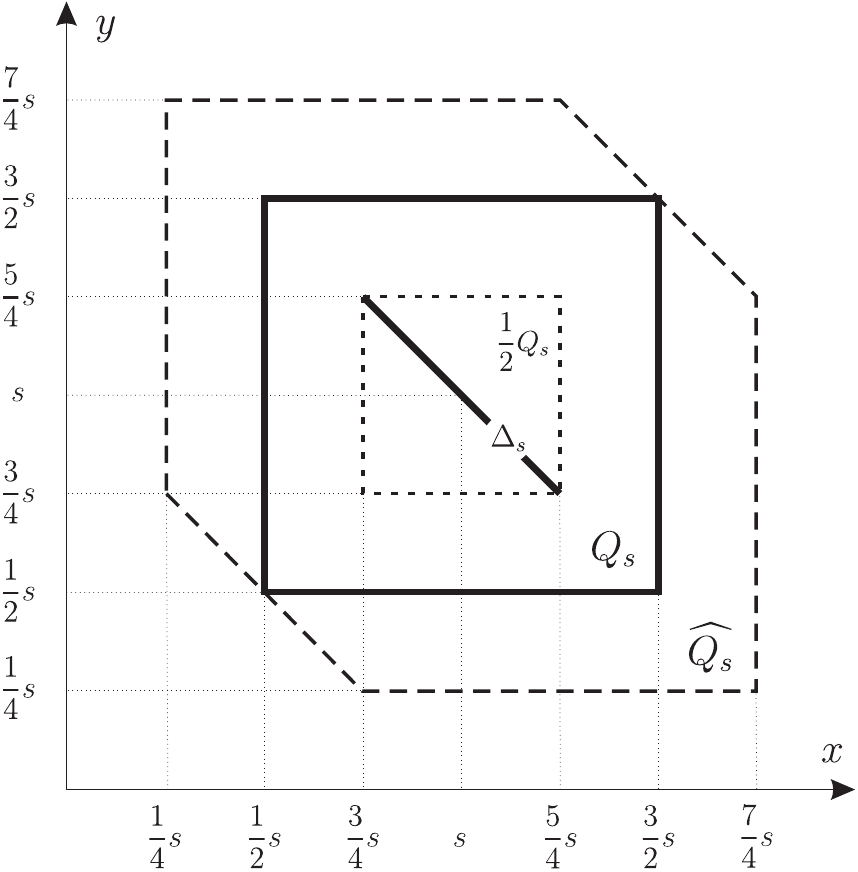}
\caption{Counterexample for $M_{\mu}^{\mathcal{Q}_+}$ in dimension $d=2$.}
\label{fig1}
\end{figure}
Assuming \textit{a contrario} that $M_{\mu}^{\mathcal{Q}_+}$ is of
weak type $(1,1)$, we claim that
 \begin{equation}\label{4z}
\mu(\widehat{Q_s})\lesssim \mu(Q_s), \qquad s\ge2.
\end{equation}
To see this, take $(x_0,y_0)\in \widehat{Q_s}$ and find a square $Q^0=Q((x',y'),s\slash2)$
with center on $\Delta_s$ and of side length $s$, such that $(x_0,y_0)\in Q^0$.
It is clear that $\frac12Q_s\subset Q^0$,
and by Lemma \ref{lem:balls} $\mu(Q^0) = \mu(Q_s) \simeq e^{-s}$.
Thus, for the $L^1$-normalized function $\widetilde{\chi}=\frac1{\mu(\frac12Q_s)}\chi_{\frac12Q_s}$ one has
$$
M_{\mu}^{\mathcal{Q}_+}\widetilde{\chi}(x_0,y_0)\ge \frac1{\mu(Q^0)}\int_{Q^0} \widetilde{\chi} \,d\mu = \frac1{\mu(Q_s)}.
$$
We conclude that
$$
\widehat{Q_s}\subset \Big\{(x,y): M_{\mu}^{\mathcal{Q}_+}\widetilde{\chi}(x,y)\ge \frac1{\mu(Q_s)}\Big\},
$$
hence \eqref{4z} follows. On the other hand, since $\widehat{Q_s}$ contains the
rectangle $R_s$ with basis $\Delta_s - (\frac{s}2,\frac{s}2)$ and height $\sqrt{2}s$,
we have
$$
\mu(\widehat{Q_s}) \ge \int_{R_s}  e^{-(x+y)}\, dx dy
	\gtrsim s \int_{s}^{3s} e^{-r}\, dr \gtrsim s e^{-s}.
$$
For large $s$ this contradicts \eqref{4z} since, as already noted, $\mu(Q_s)\simeq e^{-s}$.

We now continue with the operator $M_{\mu}^{\mathcal{B}_+}$ in dimension $d=2$.
Let $B_s$, $s\ge1$, denote the ball with center at $(s,s)$ and radius $s/2$ (thus $B_s$ is a usual Euclidean disc),
and let $\widehat{B_s}$ be the union of all discs obtained by moving $B_s$ (or rather its center) along the line segment
$\widetilde{\Delta_s}$ which is the intersection of $\frac12B_s$ with the line $x+y=2s$, see Figure~\ref{fig2}.
\begin{figure}[ht]
\includegraphics[height=0.5\textwidth]{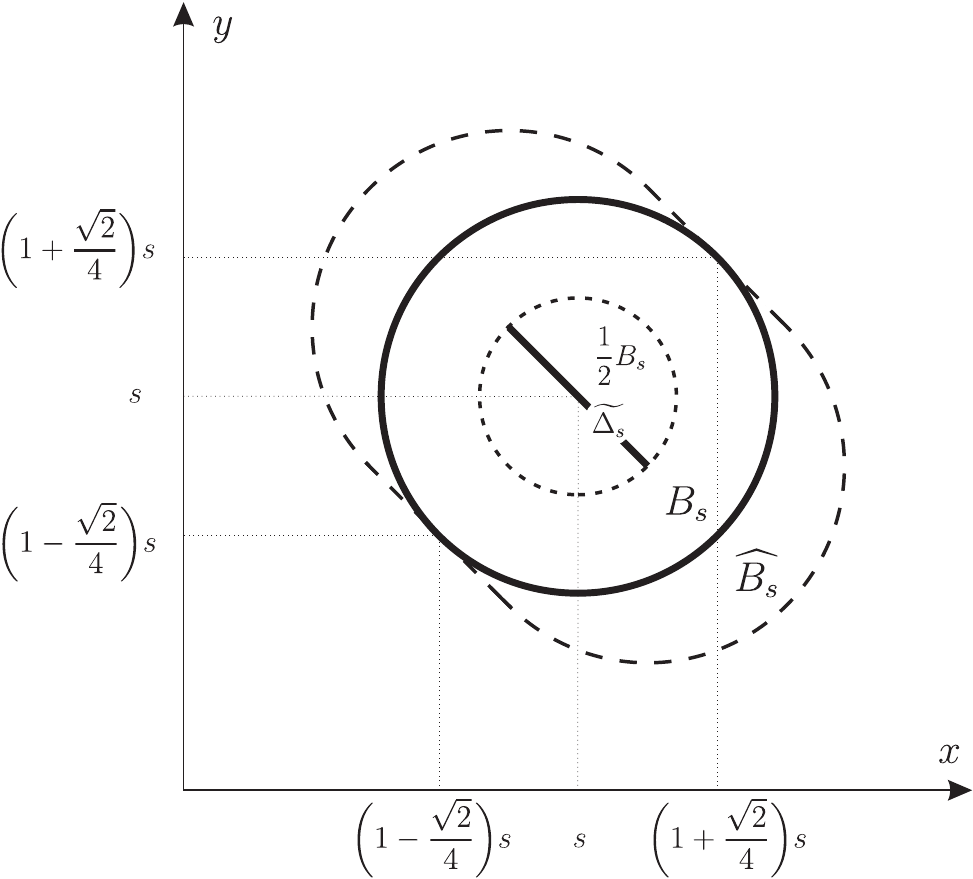}
\caption{Counterexample for $M_{\mu}^{\mathcal{B}_+}$ in dimension $d=2$.}
\label{fig2}
\end{figure}
Again, assuming \textit{a contrario} that $M_{\mu}^{\mathcal{B}_+}$ is of weak type $(1,1)$, we claim that
\begin{equation}\label{4z2}
\mu(\widehat{B_s})\lesssim \mu(B_s), \qquad s\ge2.
\end{equation}
The argument is similar to that for squares.
 Lemma \ref{lem:balls} yields
 $$
\mu(B_s) = \mu\big(B((x',y'),s\slash2)\big) \simeq \sqrt{s} \,e^{-s(2-\frac{\sqrt2}2)},
	\qquad s \ge 2, \quad (x',y')\in \widetilde{\Delta_s}.
$$
Since $\widehat{B_s}$ contains the rectangle $\widetilde{R_s}$ with
basis $\widetilde{\Delta_s}-(\frac{s}{2\sqrt{2}},\frac{s}{2\sqrt{2}})$ and height $s$ (that contains $\frac{1}2 B_s$), we have
$$
\mu(\widehat{B_s}) \ge \int_{\widetilde{R_s}} e^{-(x+y)}\,dx dy
\gtrsim s\int_{(2-\frac{\sqrt2}2)s}^{(2+\frac{\sqrt2}2)s}e^{-r}\,dr
\gtrsim s e^{-(2-\frac{\sqrt2}2)s},
$$
which for large $s$ contradicts \eqref{4z2}.

We pass to explaining the changes necessary for $d\ge3$. Let $Q_s$, $s\ge1$,
denote the cube centered at $s\mathbf{1}$, of side length $s$,
and let $\widehat{Q_s}$ be the union of all cubes emerging from moving the center of $Q_s$ along the hypersegment
$\Delta_s$ obtained by intersecting $\frac12Q_s$ with the hyperplane $x_1+\ldots+x_d=ds$.
With the present notation the justification of \eqref{4z}, assuming \textit{a contrario} the weak type $(1,1)$ of
$M_{\mu}^{\mathcal{Q}_+}$, is analogous to that for the case $d=2$ and involves the estimate (see Lemma \ref{lem:balls})
$$
 \mu(Q_s) = \mu\big(Q(x',s/2)\big)\simeq e^{-ds\slash2}, \qquad s\ge2,\quad x'\in \Delta_s.
$$
Now \eqref{4z} is contradicted for large $s$ by
$$
\mu(\widehat{Q_s})\gtrsim s^{d-1}e^{-ds\slash2}.
$$
To justify the last estimate, observe that $\widehat{Q_s}$ contains the hyperprism $R_s$
with basis $\Delta_s-\frac{s}2 \textbf{1}$ and height $\sqrt{d}s$. Then
$$
\mu(\widehat{Q_s}) \ge \int_{R_s} e^{-|x|_1}\, dx \gtrsim
	s^{d-1} \int_{\frac{ds}2}^{\frac{3ds}2} e^{-r}\, dr \gtrsim s^{d-1}e^{-ds/2},
		\qquad s \ge 1.
$$

Similarly, let $B_s$, $s\ge1$,  denote the ball with center at $s\textbf{1}$ and radius $s\slash2$,
and let $\widehat{B_s}$ be the union of balls emerging from moving the center of $B_s$ along the hypersegment $\Delta_s$
obtained by intersecting $\frac12B_s$ with the hyperplane
$x_1+\ldots+x_d=ds$. Assuming again \textit{a contrario} the weak type $(1,1)$ of $M_{\mu}^{\mathcal{B}_+}$, we prove
\eqref{4z2} in a way  analogous to that for the case $d=2$ with the estimate
$$
 \mu\big(B(x',s\slash2)\big)\simeq s^{\frac{d-1}2}e^{-s(d-\frac{\sqrt d}2)},
 	\qquad s\ge2,\quad x'\in \widetilde{\Delta_s}
$$
included. Let $\widetilde{R_s}$ be the cylinder with basis $\widetilde{\Delta_s}-\frac{s}{2\sqrt{d}}$
and height $s$ that includes $\frac12B_s$. Since $\widetilde{R_s}\subset \widehat{B_s}$, we have
$$
\mu(\widehat{B_s}) \ge \int_{\widetilde{R_s}} e^{-|x|_1}\,dx\gtrsim
 s^{d-1}\int_{(d-\frac{\sqrt d}2)s}^{(d+\frac{\sqrt d}2)s}e^{-r}\,dr
 \gtrsim s^{d-1}e^{-(d-\frac{\sqrt d}2)s}.
$$
For large $s$, this contradicts $\mu(B_s)\simeq s^{\frac{d-1}2}e^{-s(d-\frac{\sqrt d}2)}$.
This finishes the proof.
\qed

\begin{rem} \label{rem:gen2}
In view of Remark \ref{rem:gen}, the above proof extends in a
straightforward manner to the context of the measure $\mu_{\alpha}$ given in \eqref{mua}. Consequently,
$M_{\mu_{\alpha}}^{\mathcal{Q}_+}$ and $M_{\mu_{\alpha}}^{\mathcal{B}_+}$ are not weak type $(1,1)$.
\end{rem}

\subsection*{Alternative condensed version of the proof of Theorem \ref{thm:main}(A), the cases of $\mathbf{M_{\mu}^{\mathcal{Q}}}$
and $\mathbf{M_{\mu}^{\mathcal{B}}}$}

Consider first ${M_{\mu}^{\mathcal{Q}_+}}$. With $s>1$, we
choose $0 \le f \in L^1(d\mu)$ so that the measure $f d\mu$ is a close approximation of the Dirac measure
$\delta_{2s\mathbf{1}}$.
The cube $Q(2s\mathbf{1}+y,s)$  will contain the point $2s\mathbf 1$ if
$y\perp \mathbf 1$  and $|y|_\infty <s$, and this cube is contained in $\mathbb{R}^d_+$.
Then any point $x \in Q(2s\mathbf{1}+y,s)$ will satisfy
\begin{equation}\label{lowerMQ}
 M_{\mu}^{\mathcal{Q}_+} f(x)\gtrsim \mu\big(Q(2s\mathbf{1}+y,s)\big)^{-1} \simeq \exp{ \left(|z_\infty|_1\right)},
\end{equation}
where we applied Lemma \ref{lem:balls}, and $z_\infty = z_\infty(2s\mathbf{1}+y,s) = s\mathbf{1}+y$. Notice that
$|z_\infty|_1 = ds$ does not depend on $y$. The union of these cubes taken  over all admissible points $y$
will contain the set
\begin{equation*}
\big \{ \sigma \mathbf{1} + y:\: s< \sigma < 2s,\;   y\perp \mathbf 1, \; |y|_\infty <s   \big\},
\end{equation*}
 whose $\mu$ measure is at least
$c\exp{ \left(-|z_\infty|_1\right)}\,s^{d-1} $.
Since \eqref{lowerMQ} holds in this set, the weak type $(1,1)$ inequality is violated for large $s$.

In the case of  ${M_{\mu}^{\mathcal{B}_+}}$, we proceed similarly, with the same $f$ but with the balls
$B(2s\mathbf 1+y,s)$ instead of the cubes. In view of  Lemma \ref{lem:balls}, the estimate  \eqref{lowerMQ} will now read
$ M_{\mu}^{\mathcal{B}_+} f(x) \gtrsim  \mu(B(2s\mathbf{1}+y,s))^{-1} \simeq
\exp{ \left(|z_2|_1\right)} \,s^{(1-d)/2}$, where  $z_2 = z_2(2s\mathbf{1}+y,s)$.
The measure of the union of the balls will be at least $c\exp{ \left(-|z_2|_1\right)}\,s^{d-1}$.
These two estimates together disprove the weak type inequality.
\qed

\subsection*{Proof of Theorem \ref{thm:main}(A), the case of $\mathbf{M_{\mu}^{\mathcal{D}}}$}
Fixing a large $N>0$,
we now let $f d\mu$ approximate $\delta_{(0,\dots,0,N)}$ (cf.\ the second proof for $M_{\mu}^{\mathcal{Q}_+}$).

Let $\xi\in \mathbb R^d_+$ with $|\xi|_1 < N$, and write $s = |\xi |_1$.
To estimate $M^{\mathcal{D}_+}_{\mu} f(\xi)$ from below,
we introduce a closed diamond  $D=\{x\in \mathbb R^d_+:|x-c|_1\leq M\}$
with $c_i=\xi_i$ for $i<d$ and $c_d=\xi_d+M$.
Here $M > N+s$. Then the points
 $\xi$ and $(0,\dots,0,N)$ are both in $D$, and  $|x|_1 \ge s$ if $x \in D$.
Since $x_d\ge \xi_d$ for all points $x\in D$, one has for $h>0$
\begin{align*}
  D\cap \{x:\:|x|_1=s+h\}\subset &
\left\{x \in \mathbb R^d_+:\:|x|_1=s+h,\;\; \xi_d\le x_d \right\} \\
\subset & \Big\{x \in \mathbb R^d_+:\:  x_d = s+h- \sum_1^{d-1} x_i,\; \;\;  \sum_1^{d-1} x_i \le s+h- \xi_d  \Big\}
\end{align*}
and the $(d-1)$-dimensional area of the last set here is $\mathcal{O}((s+h-\xi_d)^{d-1})$, as seen by projecting onto
$\mathbb{R}^{d-1}$.
Thus
\[
\mu(D) \lesssim \int_0^\infty (s+h-\xi_d)^{d-1} e^{-s-h}\,dh \simeq
(1+s-\xi_d)^{d-1}\, e^{-s}.
\]
This implies that
$M^{\mathcal{D}_+}_{\mu} f(\xi)\gtrsim {e^s}/(1+s-\xi_d)^{d-1}$; observe that $s-\xi_d = \sum_1^{d-1} \xi_i$.

Next we choose the level $\lambda= {N^{1-d}\,e^N}$ and examine when
 $M^{\mathcal{D}_+}_{\mu} f(\xi)\gtrsim \lambda $. This occurs if
$1+(s-\xi_d)^{d-1} \lesssim N^{d-1} e^{s-N}$, in particular if
\begin{equation*}
1 < N^{d-1} e^{s-N} \quad \mathrm{i.e.} \quad s > N-(d-1)\log N   \quad \mathrm{and}  \quad
  (s-\xi_d)^{d-1} < N^{d-1} e^{s-N}.
\end{equation*}
To find points   $\xi\in \mathbb R^d_+$  satisfying these two inequalities, we  fix
\[
 |\xi|_1 = s \in \big(N-(d-1)\log N,\: N\big).
\]
We can then choose any  $\xi_i \in \left( 0,\: d^{-1}\,N\,e^{(s-N)/(d-1)} \right)$,
$i= 1,\dots, d-1$, and set $\xi_d = s - \sum_1^{d-1} \xi_i$.
Indeed, for such  points $\xi$ the first inequality is clear, and the second one follows because
\begin{equation*}
s-\xi_d = \sum_1^{d-1}  \xi_i  < N\,e^{(s-N)/(d-1)} < s.
\end{equation*}
Here the last inequality assures that
$\xi_d$ is positive, and it holds since  $s<N$ implies $e^s\,s^{1-d} < e^N\,N^{1-d}$ for large $s$ and $N$.

Keeping still  $s$ fixed, we see that the $(d-1)$-dimensional measure of the set of points  $\xi$ thus obtained is of order
of magnitude $ N^{d-1} e^{s-N}$. Varying then $s$, we conclude that the $\mu$-measure of the set of all points $\xi$ obtained
is greater than constant times
\[
 \int_{N-(d-1)\log N}^N N^{d-1} e^{s-N}e^{-s}\,ds \simeq N^{d-1}\,e^{-N}\log N  = \frac{\log N}\lambda.
\]
For large $N$, this contradicts the weak-type $(1,1)$ boundedness of $M^{\mathcal{D}_+}_{\mu}$.
\qed

\section{Proof of {Theorem \ref{thm:main}}(B)} \label{sec:proofb}

As remarked in Section \ref{sec:intro}, the case of $M_{\mu}^{\mathcal{Q}}$ in Theorem \ref{thm:main}(B)
is an immediate consequence of the one-dimensional result. The remaining two cases are much less straightforward and
will be treated subsequently. We shall work with the operators restricted to $\mathbb{R}^d_+$, see Proposition \ref{prop:red}.
We make the following two preliminary reductions in proving the $L^p$-boundedness of
$M_{\mu}^{\mathcal{D}_+}$ and ${M_{\mu}^{\mathcal{B}_+}}$.

\medskip

\noindent \textbf{Reduction 1.} We may consider only diamonds (elements of $\mathcal{D}_+$) or balls (elements of $\mathcal{B}_+$)
with radii bounded from below by any fixed positive constant, due to the local doubling property of $\mu$, see Lemma \ref{lem:locd}.

\medskip

\noindent \textbf{Reduction 2.} Among diamonds or balls remaining after Reduction 1, we may consider only those
not intersecting $t \Sigma_+^{d-1} = \{x \in \R : |x|_{1} = t \}$ for $0<t\le c$ with $c>2$ arbitrary and fixed, since otherwise
they have measures bounded from below (and above) by a positive constant.

\medskip

We first consider the simpler case $M^{\mathcal{D}_+}_{\mu}$. The reasoning in case of $M^{\mathcal{B}_+}_{\mu}$ is
more sophisticated, because of the geometry of the balls in $\mathbb{R}^d_{+}$, especially those touching the boundary
of $\mathbb{R}^d_+$.

\subsection*{Proof of Theorem \ref{thm:main}(B), the case of $\mathbf{M_{\mu}^{\mathcal{D}}}$}
Our aim is to prove that $M_{\mu}^{\mathcal{D}_+}$ is bounded on $L^p(\mathbb{R}^d_+,d\mu)$ for $1<p<\infty$.
Recall that diamonds in $\mathbb{R}^d_+$ are denoted
\begin{align*}
D(z,r) & = \big\{ y \in \mathbb{R}^d_+ : |z-y|_1 < r \big\}.
\end{align*}
Here $r > 0$, and $z$ will always be in  $\mathbb{R}^d_+$.

For each $x  \in \mathbb{R}^d$ we denote   $x_0 = \sum_1^d x_j$. Then
$$
\Pi_t = \{x \in \mathbb{R}^d:\: x_0 = t \}
$$
is a hyperplane for each $t \in \mathbb{R}$, and we write $\lambda_t$ for Lebesgue measure in $\Pi_t$.
Further,   $x_t$ will for $t>0$ denote the orthogonal projection on  $\Pi_t$ of  any point $x$.

Let $f$ be a nonnegative function in  $ L^1(d\mu)$, which we extend by $0$ in $ \mathbb{R}^d \setminus \mathbb{R}^d_+$.
We want to estimate $M_{\mu}^{\mathcal{D}_+}f$ at a point $\xi \in \mathbb{R}^d_+$.
So we take a diamond  ${D} = {D}(z,r)$ with $z \in \mathbb{R}^d_+$ and such that $\xi \in D$,
and estimate the mean
$$
\frac{1}{\mu(D)}\, \int_{D} f(y)\,d\mu(y).
$$
Reductions 1 and 2 allow us to assume that the quantities $r$ and $z_0 - r > 2$ are large.
It will be convenient to write $b = z_0-r$, which indicates the ``bottom'' of the diamond.

Denoting slices of $D$ as  $D_t = D \cap \Pi_t$, we can write this mean as
\begin{equation}\label{mean}
  \frac{1}{\mu(D)}\, \int_{b}^{b+2r}\, e^{-t}\, \int_{D_t} f\,d\lambda_t\,dt.
\end{equation}
The inner integral here will be estimated in terms of a $(d-1)$-dimensional maximal operator.
We define $V$ as the set consisting of the $d$-dimensional vector $v = (1,-1,0,\dots,0)$
and all the vectors obtained from $v$ by permuting the coordinates.

\begin{propo}\label{sophi}
For each $t \in (b,b+2r)$ there exists a $(d-1)$-dimensional parallelepiped $P_t \subset \Pi_t$
containing $D_t$ and containing $\xi_{t}$ such that
$$
\lambda_t(P_t) \lesssim \big[1+(t-b)\vee (\xi_0-b)\big]^{d-1}\, e^{b}\, \mu(D)
$$
and whose edges are all parallel to vectors in $V$.
\end{propo}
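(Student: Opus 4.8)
The plan is to construct $P_t$ explicitly as a box aligned with the hyperplane $\Pi_t$, and then to bound its $(d-1)$-dimensional volume by relating it to the quantity $\mu(D)$ computed via Lemma \ref{lem:balls}. First I would record the geometry: the slice $D_t = D(z,r) \cap \Pi_t$ is the intersection of the $\ell^1$-ball $\{|x-z|_1 < r\}$ with the hyperplane $x_0 = t$, translated so that its ``center'' sits at $z_t$ (the projection of $z$, which in fact lies on $\Pi_{z_0}$, so one must be a little careful — $z$ is at height $z_0 = b+r$, and $D_t$ is centered around the point of $\Pi_t$ closest to $z$). Inside $\Pi_t$, the set $D_t$ is a convex polytope whose diameter in the $\ell^1$ (equivalently, up to dimensional constants, $\ell^2$) sense is $\mathcal{O}(r \wedge |t - z_0| \, \cdot \, (\text{something}))$; more precisely, the slice at height $t = z_0 + h$ of an $\ell^1$-ball of radius $r$ has $\ell^1$-width of order $r - |h|$. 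A cruder bound suffices: $D_t$ has diameter $\mathcal{O}(\min(t-b,\, b+2r-t)) = \mathcal{O}(r)$, but we need the sharper dependence on $t-b$ for small $t-b$, so I would use that $D_t$ sits inside an $\ell^\infty$-ball (within $\Pi_t$) of radius $\mathcal{O}(t-b)$ centered at the ``lowest'' vertex direction.

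Next I would incorporate the constraint that $\xi \in D$ must also lie in $P_t$. Since $\xi \in D$ means $\xi_0 \in [b, b+2r]$ and $\xi$ is within $\ell^1$-distance $r$ of $z$, the projection $\xi_t$ of $\xi$ onto $\Pi_t$ is within distance $\mathcal{O}(|\xi_0 - t|) \le \mathcal{O}((t-b)\vee(\xi_0-b))$ of the slice $D_t$ (move along a generator of the diamond from $\xi$ up or down to height $t$). So enlarging a box around $D_t$ by an additive amount $\mathcal{O}((t-b)\vee(\xi_0-b))$ — and adding $1$ to handle the bounded-from-below regime and to keep the bound meaningful when $t = b$ — produces a parallelepiped $P_t \supset D_t \cup \{\xi_t\}$ of $(d-1)$-volume
$$
\lambda_t(P_t) \lesssim \big[1 + (t-b)\vee(\xi_0-b)\big]^{d-1}.
$$
To choose the edge directions: the natural coordinates on $\Pi_t$ adapted to the diamond are precisely the differences $x_i - x_j$ of coordinates (these span the hyperplane $\{x_0 = 0\}$), and the vertices/facets of an $\ell^1$-ball slice are governed by sign patterns of such differences; so a box with edges parallel to the vectors $v \in V$ is the correct aligned bounding box. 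I would verify $D_t$ (and $\xi_t$) fits in such a box by writing each point of $\Pi_t$ in terms of the coordinates $y_i - y_d$, $i = 1, \dots, d-1$ — noting $\{y_i - y_d\}$ are not orthogonal but the associated box volume differs from the Euclidean one only by a dimensional constant.

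Finally I would convert the raw volume bound into the stated inequality by invoking Lemma \ref{lem:balls}: since $b = z_0 - r > 2$ and $r$ are large and $D = D(z,r) \subset \mathbb{R}^d_+$ (as $r \le z_i$ would need checking, but Reductions 1–2 and the structure of the problem arrange that the bottom $b$ is comfortably positive — in fact $D$ need not be entirely in $\mathbb{R}^d_+$ here since $f$ is extended by $0$, so I would instead just use the one-sided estimate $\mu(D) \gtrsim e^{-b} r^{d-1}$ from the lower bound in Lemma \ref{lem:balls}, valid because $\exp(-|z_1|_1) = \exp(-(z_0 - r)) = e^{-b}$). This gives $e^b \mu(D) \gtrsim r^{d-1} \ge 1$, and more to the point $[1 + (t-b)\vee(\xi_0-b)]^{d-1} \le (2r)^{d-1} \cdot [\text{const}] \lesssim r^{d-1} \lesssim e^b \mu(D)$ when $t-b$ and $\xi_0 - b$ are both $\mathcal{O}(r)$, which holds since $t \in (b, b+2r)$ and $\xi_0 \le b + 2r$. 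Hence $\lambda_t(P_t) \lesssim [1 + (t-b)\vee(\xi_0-b)]^{d-1} e^b \mu(D)$, as claimed.

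\medskip

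\noindent\emph{Main obstacle.} The delicate point is not the volume count but getting the \emph{sharp} dependence on $t - b$ near the bottom of the diamond: a crude bound $\lambda_t(P_t) \lesssim r^{d-1} e^b \mu(D)$ would be too lossy for the subsequent maximal-operator argument in \eqref{mean}, where the factor $[1+(t-b)\vee(\xi_0-b)]^{d-1}$ must be integrated against $e^{-t}$. So the real work is the precise claim that the slice $D_t$ of an $\ell^1$-ball at height $t$ above its bottom has $(d-1)$-diameter $\mathcal{O}(t - b)$ (not merely $\mathcal{O}(r)$), together with the matching estimate that $\xi_t$ is within $\mathcal{O}((t-b)\vee(\xi_0-b))$ of that slice — this requires carefully tracking how far a point of $D$ at height $\xi_0$ can drift when projected to height $t$, using that all such drift happens along diamond generators whose horizontal displacement is controlled by the vertical displacement.
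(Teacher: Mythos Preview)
Your plan contains a genuine gap: the claim that the slice $D_t$ of the $\ell^1$-ball at height $t=b+h$ has $(d-1)$-diameter $\mathcal{O}(t-b)$ is false. Unlike a Euclidean ball or a cone, the $\ell^1$-ball $\{|y-z|_1<r\}$ does not have a single ``bottom vertex'' with respect to $|\cdot|_1$; the minimum of $y_0$ is attained on an entire face, and the slice $D_{b+h}$ (in the untruncated case) has diameter comparable to $r$ for \emph{every} $h\in(0,r)$. Concretely, for $d=2$, $z=(N,N)$, $r=N/2$, one has $b=3N/2$ and the slice $D_{b+h}$ is a segment of length $\simeq N$ for all small $h>0$; taking $\xi\in D$ with $\xi_0-b$ small forces $\lambda_t(P_t)\gtrsim N$ while your asserted bound $[1+(t-b)\vee(\xi_0-b)]^{d-1}\simeq 1$. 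So your ``raw volume'' estimate $\lambda_t(P_t)\lesssim[1+(t-b)\vee(\xi_0-b)]^{d-1}$ cannot hold. Your appeal to Lemma~\ref{lem:balls} is also illegitimate in general, since that lemma requires $r\le\min_i z_i$, which is not assumed; the diamond may be heavily truncated by $\partial\mathbb{R}^d_+$, and then $\mu(D)$ can be far smaller than $e^{-b}r^{d-1}$.

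What actually makes the proposition true is a cancellation that your scheme misses: both $\lambda_t(P_t)$ and $e^b\mu(D)$ are governed by the same product $\prod_{i=1}^{d-1}(z_i+1)\wedge r$ (after renumbering so that $z_d=\max_i z_i$), and only the \emph{ratio} is controlled by $[1+(t-b)\vee(\xi_0-b)]^{d-1}$. The paper obtains matching upper bounds for $\lambda_t(P_t)$ and lower bounds for $\mu(D)$ of this product form (via a simple lemma about sets $\{y\in\prod(0,a_i):\sum y_i<R\}$), splitting according to whether $\sum_1^{d-1}z_i\ge r-h$; the factors involving the $z_i$'s then cancel in the quotient. You need to track the $z_i$'s through both estimates rather than try to bound numerator and denominator separately by universal quantities.
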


Before proving this proposition, we use it to finish the proof of the $L^p(d\mu)$-boundedness of $M_{\mu}^{\mathcal{D}_+}$.
Here $1<p<\infty$.

In the iterated integral in \eqref{mean}, we extend the inner integration to $P_{t}$ and insert the factor
$$
\frac{ [1+(t-b)\vee (\xi_0-b)]^{d-1}\, e^{b}\, \mu(D)}{\lambda_t(P_t)} \gtrsim 1.
$$

Thus  \eqref{mean} is controlled by
$$
\int_{b}^{b+2r}\: e^{-t+b}\, [1+(t-b)\vee (\xi_0-b)]^{d-1}\, \frac{1}{\lambda_t(P_t)}\,
\int_{P_{t}} f\,d\lambda_t\,dt.
$$

The mean over $P_{t}$ here can be estimated in terms of the non-centered maximal operator $\mathcal M_t$ in $\Pi_{t}$
associated with parallelepipeds having edges with directions from $V$, evaluated at $\xi_t$.
So the iterated integral is at most
\begin{equation}\label{integr}
 \int_{b}^{b+2r}\: e^{-t+b}\,\big[1+(t-b)\vee (\xi_0-b)\big]^{d-1}\, \mathcal M_tf(\xi_t)\,dt.
\end{equation}

We consider the exponent $-t+b$ here. Since $\xi_0>b$ and $t>b$, we have
\begin{align*}
-t+b & = \frac{\xi_0}{p} - \frac{t}{p} -  \frac{1}{p'}\,(t-b) - \frac{1}{p}\,(\xi_0-b) \\
&\le \frac{\xi_0}{p} - \frac{t}{p}- \Big(\frac{1}{p}\wedge \frac{1}{p'}\Big)\,\big[(t-b) \vee (\xi_0-b)\big] \\
&\le  \frac{\xi_0}{p} - \frac{t}{p}- c\,\big[(t-b) \vee (\xi_0-b)\big] -c\,|\xi_0-t|
\end{align*}
with $c = c(p) > 0$; in the last step we used the simple fact that $(t-b) \vee (\xi_0-b) \ge |\xi_0-t|$.
After inserting this estimate in the integral \eqref{integr}, we can delete the factors \\
$e^{-c\,[(t-b) \vee (\xi_0-b)]}\,\big[1 + (t-b) \vee (\xi_0-b)\big]^{d-1}$, and thus estimate \eqref{integr} by constant times
$$
 \int_{b}^{b+2r}\:e^{{\xi_0}/{p}} \, e^{-{t}/{p}}\, e^{- c \,|\xi_0-t|}\, \mathcal M_tf(\xi_t)\,dt.
$$

Now we apply H\"older's inequality, with $e^{- c \,|\xi_0-t|/p'}$ as one factor.
It follows that \eqref{mean} is not larger than constant times
$$
 \bigg(\int_{0}^{+\infty}\:e^{{\xi_0}} \, e^{-t}\,
 e^{- c \,|\xi_0-t|}\, \big[\mathcal M_tf(\xi_{t} )\big]^p\,dt\bigg)^{1/p}.
$$
Since this quantity is independent of the choice of the diamond $D$, it gives an upper bound for $M_{\mu}^{\mathcal{D}_+}f(\xi)$.

Integrating $p$th powers with respect to $d\mu(\xi)$, one obtains
$$
 \int \big[M_{\mu}^{\mathcal{D}_+}f(\xi)\big]^p \,d\mu(\xi)
 \lesssim  \int_{0}^{+\infty} \int_{\Pi_{t}}\, \int_{0}^{+\infty}\: e^{-t}\,
 e^{- c \,|\xi_0-t|}\, \big[\mathcal M_tf(\xi_{t})\big]^p\,dt\,d\lambda_t(\xi_t)\,d\xi_0.
$$

In the right-hand side here, we integrate first in $\xi_t$, using the fact$^{\ddag}$ that the operator $\mathcal M_t$
is bounded on $L^p(d\lambda_t)$ uniformly in $t$.
\footnotetext{$\ddag$ There are finitely many components of $\mathcal{M}_t$ defined by fixing the directions of the edges of
the parallelepipeds, and each of them is made by a linear transformation into
the  strong maximal operator $M_{\mathrm{str}}$ in $\mathbb{R}^{d-1}$, see \eqref{strong}.}
Thus the triple integral is at most constant times
$$
 \int_{0}^{+\infty} \int_{0}^{+\infty} \: e^{-t}\, e^{- c \,|\xi_0-t|}\,  \int_{\Pi_{t}} \,
 \,  f(\zeta )^p\,d\lambda_t(\zeta)\,dt\,d\xi_0.
$$
Integrating next in $\xi_0$, we conclude that
$$
 \int \big[M_{\mu}^{\mathcal{D}_+}f(\xi)\big]^p \,d\mu(\xi) \lesssim
 \int_{0}^{+\infty} \,e^{-t}\, \int_{\Pi_{t}}  \,
 \,  f(\zeta )^p\,d\lambda_t(\zeta)\,dt = \|f \|_{L^p(d\mu)}^p,
$$
and this proves the $L^p(d\mu)$-boundedness of $M_{\mu}^{\mathcal{D}_+}$.

\medskip

\begin{proof}[Proof of Proposition \ref{sophi}]
We fix $\xi\in D$ and $t \in (b, b+2r)$, and for convenience we also write
$t = b +h = z_0 - r +h$ with $0 < h < 2r$.  Further, we renumber the coordinates so that
\begin{equation}\label{renumber}
 z_d = \max_{1\le j \le d} z_j.
\end{equation}

Denote
 \begin{equation*}
  G_{t,\xi} = \Pi_t\cap \big\{x \in \mathbb{R}^d\colon \forall i\;\; x_i > - |t - \xi_{0}|\quad
     \mathrm{and}\quad |z-x|_1 < r + |t - \xi_{0}| \big\}.
 \end{equation*}
Obviously $D_t\subset G_{t,\xi}$ but also $\xi_t\in G_{t,\xi}$. Indeed, $(\xi_t)_i>(\xi_t)_i-\xi_i\ge-|\xi_t-\xi|_1=-|t-\xi_0|$ and
	$|z-\xi_t|_1\le |z-\xi|_1+  |\xi- \xi_{t}|_1  < r+|t - \xi_{0}|$.

In order to include $ G_{t,\xi}$ in a parallelepiped in $\Pi_t$, we let $x\in G_{t,\xi}$.
Since $|z-x|_1 < r + |t - \xi_{0}|$ and $(z-x)_0=r-h$, we then have for each $i=1,\ldots,d$
  \begin{equation}\label{3}
	x_i-z_i\le \sum_{1}^{d} (x_i-z_i)_+=\frac12\, \big[ |x-z|_1-(z-x)_0\big] <\frac12\, \big(h+|t - \xi_0|\big).
	\end{equation}

Switching coordinates to $y_i = z_i-x_i+(h+|t - \xi_0|)\slash2$, we get
 \begin{equation*}
 0 < y_i <  z_i + |t - \xi_{0}| + (h+|t - \xi_0|)\slash2 =   z_i + \big(h+3|t - \xi_0|\big)\slash2, \qquad i = 1,\dots, d.
  \end{equation*}
Further,  $(z-x)_0=r-h$ implies, since $y_d > 0$,
\begin{equation*}
 \sum_{1}^{d-1} y_i =  y_0 - y_d = (z-x)_0 + \frac{d(h+|t - \xi_0|)}2 -y_d < r-h +\frac{d(h+|t - \xi_0|)}2.
\end{equation*}

We need a simple lemma.

  \begin{lemma}\label{measure}
    Let $m \ge 2$ and consider the set  $E \subset \mathbb R^m$ defined by
    \begin{equation*}
   E = \Big\{y \in  \prod_1^m (0, a_i):\,
     \sum_{1}^{m}y_i < R \Big\}
    \end{equation*}
    for some $a_i,\: R > 0$. Then $E$ is contained in the $m$-dimensional rectangle
    \begin{equation*}
      \widetilde E = \prod_1^m (0, a_i\wedge R),
    \end{equation*}
    and the Lebesgue measures satisfy  $|E| \simeq \big|\widetilde E\big| = \prod_1^m a_i\wedge R$.
  \end{lemma}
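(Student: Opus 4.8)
The plan is to prove the inclusion and then the measure equivalence, both by elementary arguments. The inclusion $E \subset \widetilde E$ is immediate: if $y \in E$ then each $y_i \in (0,a_i)$, and also $y_i \le \sum_j y_j < R$ (using positivity of the other coordinates), so $y_i < a_i \wedge R$ for every $i$. Hence $E \subset \prod_1^m (0, a_i \wedge R) = \widetilde E$, which also gives the upper bound $|E| \le |\widetilde E| = \prod_1^m (a_i \wedge R)$ for free.

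The only real content is the lower bound $|E| \gtrsim \prod_1^m (a_i \wedge R)$, with an implicit constant depending only on $m$. Here I would argue that $E$ contains a scaled copy of $\widetilde E$. Specifically, if $y_i \in (0, (a_i \wedge R)/m)$ for each $i$, then $y_i < a_i$, so $y \in \prod_1^m (0,a_i)$, and moreover $\sum_1^m y_i < m \cdot (R/m) = R$, so $y \in E$. Therefore $\prod_1^m (0, (a_i\wedge R)/m) \subset E$, giving $|E| \ge m^{-m} \prod_1^m (a_i \wedge R)$. Combined with the upper bound, this yields $|E| \simeq |\widetilde E|$ with constants depending only on $m$, as claimed.

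There is essentially no obstacle here; the lemma is purely a bookkeeping device isolating a convex-body volume estimate that will be applied (with $m = d-1$, the coordinates $y_1,\dots,y_{d-1}$ from the preceding computation, $a_i = z_i + (h+3|t-\xi_0|)/2$, and $R = r - h + d(h+|t-\xi_0|)/2$) inside the proof of Proposition \ref{sophi}. The one point worth stating carefully is that the implicit constants in $\simeq$ depend only on $m$ and not on the $a_i$ or $R$, since that uniformity is what makes the subsequent estimate on $\lambda_t(P_t)$ hold uniformly in $t$, $\xi$, and the diamond $D$.

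\begin{proof}[Proof of Lemma \ref{measure}]
If $y \in E$, then for each $i$ we have $y_i < a_i$ and also, since all coordinates are positive,
$y_i \le \sum_{1}^{m} y_j < R$; hence $y_i < a_i \wedge R$. This proves $E \subset \widetilde E$ and,
in particular, $|E| \le \big|\widetilde E\big| = \prod_1^m (a_i \wedge R)$.

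For the reverse inequality, observe that if $y_i \in \big(0, (a_i\wedge R)/m\big)$ for each $i$, then $y_i < a_i$ and
$\sum_1^m y_i < m \cdot R/m = R$, so that $y \in E$. Thus $\prod_1^m \big(0,(a_i\wedge R)/m\big) \subset E$, which gives
$$
|E| \ge m^{-m}\, \prod_1^m (a_i\wedge R) = m^{-m}\, \big|\widetilde E\big|.
$$
Combining the two bounds yields $|E| \simeq \big|\widetilde E\big|$ with implicit constants depending only on $m$.
\end{proof}
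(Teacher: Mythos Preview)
Your proof is correct and follows exactly the same approach as the paper: the inclusion $E \subset \widetilde E$ is immediate, and the lower bound comes from the containment $\prod_1^m \big(0,(a_i\wedge R)/m\big) \subset E$. The paper dispatches this in one sentence, noting only the latter inclusion and calling the rest trivial; your write-up merely spells out the same argument in more detail.
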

\noindent (In expressions like the last product here, we always mean the product of the minima.)

To get the lower estimate for $|E|$ in the lemma, one observes that $E \supset \prod_1^m (0, (a_i\wedge R)\slash m)$,
and the other parts are trivial.
	
Let the projection $\pii_t\colon \Pi_{t} \to \mathbb R^{d-1}$ be given by suppression of the last coordinate.
The lemma, applied with $m = d-1$ and in the coordinates $(y_1,\dots,y_{d-1})$, implies that the projection $\pii_t(G_{t,\xi})$
is contained in a rectangle $\widetilde E$ in $\mathbb R^{d-1}$ with sides parallel to the $y$ (or equivalently $x$) coordinate axes.
Then $G_{t,\xi}$ is contained in  $\pii_t^{-1}\big(\widetilde E\big)$, which is seen to be a parallelepiped $P_{t}$
fulfilling the conditions of Proposition \ref{sophi},
except that the estimate we get for its Lebesgue measure is
  \begin{align} \nonumber
		\lambda_t(P_{t}) & \simeq  \prod_1^{d-1}  \bigg(z_i+\frac{h+3|t - \xi_0|}2\bigg) \wedge
		\bigg(r-h +\frac{d(h+|t - \xi_0|)}2\bigg) \\
		& \lesssim   \prod_1^{d-1} \big[z_i+ h + (t-b)\vee (\xi_0-b)\big] \wedge r. \label{paral}
  \end{align}

In addition to \eqref{paral}, we will deduce a similar estimate by writing first
  \begin{equation*}
  \sum_{1}^{d-1} x_i = z_0 - (z - x)_0 -x_d <  z_0 - r + h + |t - \xi_0|.
  \end{equation*}
Combining this estimate with \eqref{3} and applying Lemma \ref{measure} in the coordinates $y_i = x_i + |t - \xi_0|$,
we can argue as above. As a result, we find a parallelepiped $P_{t}$ containing $G_{t,\xi}$ and verifying
  \begin{equation}\label{paral.addit}
		\lambda_t(P_{t})  \lesssim \prod_1^{d-1} \big[z_i+ h + (t-b)\vee (\xi_0-b)\big] \wedge \big(z_0 - r +h+ |t - \xi_0|\big).
  \end{equation}

Next we derive two different lower estimates for $\mu(D)$, whose validity will depend on the condition
\begin{equation}\label{additional}
    \sum_{1}^{d-1} z_i \ge r-h.
  \end{equation}
  We shall verify that
 \begin{equation}\label{estD}
      \mu(D) \gtrsim e^{-b}\, \prod_1^{d-1} (z_i+1) \wedge \bigg\{\begin{matrix}r\\ z_0-r\end{matrix}\bigg\}
    \end{equation}
when \eqref{additional} holds (upper), and when \eqref{additional} is false (lower), respectively.

These two estimates will end the proof of Proposition \ref{sophi}
when combined with \eqref{paral} and \eqref{paral.addit}, respectively, since
  \begin{equation*}
	\frac {z_i+ h + (t-b)\vee (\xi_0-b)} {z_i+1} \lesssim 1+ (t-b)\vee (\xi_0-b),
  \end{equation*}
and
  \begin{equation*}
  \frac {[z_i+ h + (t-b)\vee (\xi_0-b)] \wedge (z_0 - r +h+ |t - \xi_0|)} {(z_i+1) \wedge (z_0 - r)} \lesssim 1+ (t-b)\vee (\xi_0-b);
  \end{equation*}
recall here that $z_0 - r > 2$, see Reduction 2.

To verify \eqref{estD}, it is enough to show that for $1<h<2$, under relevant assumptions,
    \begin{equation}\label{estPy0}
      \lambda_{b+h}(D_{b+h}) \gtrsim  \prod_1^{d-1} (z_i+1) \wedge \bigg\{\begin{matrix}r\\ z_0-r\end{matrix}\bigg\},
    \end{equation}
because one can then integrate with respect to $e^{-b-h}\,dh$ over the interval $(1,2)$.
	
Observe that the last coordinate of any point $x \in  \Pi_{b+h}$ is given by
 \begin{equation}\label{xd}
	x_d = z_d +  \sum_{1}^{d-1} (z_i-x_i) - r+ h  = z_0 -  \sum_{1}^{d-1} x_i - r+ h.
 \end{equation}
		
Let $h \in (1, 2)$. Aiming at the lower case  in \eqref{estPy0} and thus assuming  \eqref{additional} false, we define the set
\begin{equation*}
   E =
    \bigg\{(x_i)_1^{d-1} \in \mathbb{R}^{d-1}: \: 0 < x_i < z_i + \frac h {2d},\; \; i = 1,\dots, d-1,
 \; \;\, \mathrm{and}\;\;\; \sum_{1}^{d-1} x_i  <  z_0 - r  \bigg\}.
\end{equation*}
 We claim that the  inverse projection, or lift,
 $\pii_{b+h}^{-1}(E)$ is contained in $D_{b+h}$. Indeed, let $x \in\pii_{b+h}^{-1}(E)$
 and consider the last coordinate $x_d$ of $x$. From \eqref{xd} we conclude
  $$
   x_d   > z_0 -(z_0 - r) -r + h = h \qquad \mathrm{and} \qquad  x_d   < z_0  - r+ h < z_d,
  $$
  the last step since \eqref{additional} is false. Thus $x \in \mathbb{R}^d_+ \cap  \Pi_{b+h}$. Further,
\begin{equation}\label{distance}
     |x - z|_1  =   (z - x)_0 + 2\sum_{1}^{d} (x_i - z_i)_+   < r-h + \frac{2(d-1)h}{2d} < r,
\end{equation}
so that $x \in D$. The claim follows.

For the measures, we then get $\lambda_{b+h}(D_{b+h}) \ge \lambda_{b+h}(\pii_{b+h}^{-1}(E)) \simeq  |E|$, where
$|\cdot|$ denotes Lebesgue measure in $\mathbb R^{d-1}$.
Lemma \ref{measure} yields that $|E| \simeq   \prod_1^{d-1} (z_i+1) \wedge (z_0 - r)$.
This proves \eqref{estPy0} and thus \eqref{estD}, for the lower lines.
	
Next, we verify  \eqref{estPy0} (upper), under the assumption \eqref{additional}; recall that $1 < h <2$.
We start with the case $z_d \ge r$, and here we argue almost as above. Define now
 \begin{equation*}
   E' =
    \bigg\{(x_i)_1^{d-1} \in \mathbb{R}^{d-1}: \: 0 < x_i < z_i +\frac h {2d}, \; i = 1,\dots, d-1,
  \,\; \mathrm{and}\;\, \sum_{1}^{d-1} x_i  >   \sum_{1}^{d-1} z_i - r + h \bigg\}.
    \end{equation*}
     Points in $E'$ clearly satisfy
$$
 -\frac{(d-1)h}{2d} < \sum_{1}^{d-1} (z_i-x_i)  < r-h.
$$
As before, we take a point $x \in \pii_{b+h}^{-1}(E')$
    and verify that  $x \in D_{b+h}$.    From    \eqref{xd} combined with  $z_d \ge r$, we now get
$$
   x_d   > z_d -(d-1)h/(2d) -r + h > 0 \qquad \mathrm{and} \qquad  x_d   < z_d + r - h - r+ h = z_d.
$$
It follows that $x \in \mathbb{R}^d_+ \cap  \Pi_{b+h}$ and that \eqref{distance} remains valid.
This proves the inclusion $\pii_{b+h}^{-1}(E')  \subset D_{b+h}$.

Thus $\lambda_{b+h}(D_{b+h}) \gtrsim  |E'|$, and $|E'|$
can be estimated by means of Lemma \ref{measure} and the coordinates $y_i =  z_i - x_i + h/(2d),\; i=1,\dots, d-1$.
Since $0 < y_i < z_i+h/(2d)\simeq z_i + 1$ for each $i$ and $\sum_{1}^{d-1} y_i < r-h + (d-1)h/(2d) \simeq r$,
the result is $|E'| \simeq \prod_1^{d-1} (z_i+1) \wedge r$. This proves \eqref{estPy0} (upper) when $z_d \ge r$.

In the complementary case $z_d<r$, we can suppress $\wedge r$ in \eqref{estPy0} (upper) because of \eqref{renumber}.
Define $s,\sigma \in \mathbb{R}$ by
 \begin{equation*}
  s\,  \sum_{1}^{d-1} \Big( z_i+\frac h {2d} \Big)  = \sum_{1}^{d-1} z_i - r+h, \qquad
 \sigma\,  \sum_{1}^{d-1} \Big( z_i+ \frac h {2d}\Big) = z_0 - r.
 \end{equation*}
They satisfy $0 \le s < \sigma < 1$, where the first inequality follows from \eqref{additional},
the second because $h<2<z_d$ and the third from $z_d < r$.
Consider now the set
\begin{equation*}
   S =
    \bigg\{x \in \Pi_{b+h}\colon s \Big(  z_i + \frac h {2d}\Big) < x_i < \sigma  \Big( z_i +\frac h {2d}  \Big),\;
			\;i=1,\dots, d-1  \bigg\}.
\end{equation*}

Clearly, any point $x \in S$ satisfies
\begin{equation*}
  \sum_{1}^{d-1} z_i - r+h  < \sum_{1}^{d-1} x_i  <   z_0 - r,
\end{equation*}
so for its last coordinate, \eqref{xd} implies $0<h<x_d<z_d$. Thus $S \subset \mathbb{R}^d_+$, and
\eqref{distance} holds again, since for each $i=1,\dots, d-1$
 \begin{equation*}
 x_i - z_i < (\sigma-1) z_i +\sigma h/(2d) < h/(2d).
 \end{equation*}
It follows that $S \subset D_{b+h}$.

For the measures, we have
 \begin{equation*}
 \lambda_{b+h}(S)  \simeq \big|\pii_{b+h}(S)\big| =  \prod_{1}^{d-1} (\sigma-s)
	\Big(z_i + \frac h{2d}\Big) \simeq  (\sigma-s)^{d-1}\,\prod_{1}^{d-1} (z_i + 1).
 \end{equation*}
To finish the proof of   \eqref{estPy0} (upper), it is enough to verify that $\sigma-s \gtrsim 1$. But
 \begin{equation*}
 \sigma-s= \frac{z_d-h}{\sum_{1}^{d-1} [z_i+ h/(2d)]} \gtrsim \frac{z_d}{\sum_{1}^{d-1} z_i + 1}  \gtrsim 1,
 \end{equation*}
the last inequality because of \eqref{renumber}. Proposition \ref{sophi} is proved.
\end{proof}

\subsection*{Proof of Theorem \ref{thm:main}(B), the case of $\mathbf{M_{\mu}^{\mathcal{B}}}$}
Our strategy of proving the $L^p$-boundedness of ${M_{\mu}^{\mathcal{B}_+}}$
is heavily inspired by \cite{LiSj}. Thus we first
rotate suitably the whole situation and then use a slicing argument together with $L^p$-boundedness
of certain standard maximal functions. The details are as follows.

Rotate simultaneously the cone $\R$ and all the objects considered (measure, truncated balls, etc.)
so that the rotation of $\Sigma_+^{d-1}$ is orthogonal to the first coordinate axis and contained in the half-space
$\{x \in \mathbb{R}^d : x_1 > 0\}$.
Then denote by $C_+$ the rotated open cone, in which the rotated measure is, up to a multiplicative constant and scaling,
$$
{\nu}(dx) = e^{-x_1}\,dx.
$$
Clearly, the above formula extends $\nu$ from $C_+$ to all of $\mathbb{R}^d$. We shall sometimes use this extension
without explicit indication. Further, denote
$$
\pi_{\xi} = \{x \in \mathbb{R}^d: x_1 = \xi\}, \qquad \xi \ge 0.
$$

Our aim is to prove that $M_{\mu}^{\mathcal{B}_+}$ is bounded on $L^p(\mathbb{R}^d_+,d\mu)$ for $1<p<\infty$.
After rotation and scaling and keeping the same symbols, we consider $M_{\mu}^{\mathcal{B}_+}$
as a maximal operator acting on functions living on $C_+$, related to the family $\widetilde{\mathcal{B}}_+$
of truncated Euclidean balls in $\mathbb{R}^d$ with centers in $C_+$,
the truncation being relative to $C_+$. Then the $L^p$-boundedness concerns $L^p(C_+,d\nu)$.

Thus it is enough that we prove the $L^p(d\nu)$-boundedness, $1<p<\infty$, of the maximal operator
\begin{equation} \label{max_B}
Mf(x) = \sup \frac{1}{\nu(\widetilde{B})} \int_{\widetilde{B}} |f|\, d\nu,
\end{equation}
where the supremum is taken over all truncated balls
$$
\widetilde{B} = \widetilde{B}(m,r) := \mathbf{B}(m,r) \cap C_+,
$$
called simply balls henceforth, such that $m \in C_+$ and $x \in \widetilde{B}$.
Further, we may assume that $f$ is non-negative and defined in all of $\mathbb{R}^d$ but supported in the closure of $C_+$.

In what follows points in $\mathbb{R}^d$ will be written as $x=(x_1,x')$.
We shall always assume that the centers of balls $\widetilde{B}$ are in $C_+$.
Given $\widetilde{B}$, the minimum
$$
\min\big\{x_1 : x \in \cl(\widetilde{B})\big\}
$$
($\cl$ meaning closure in $\mathbb{R}^d$) is taken at a unique point $a=a(\widetilde{B})=(a_1,a') \in \partial \widetilde{B}$.

We now make some preliminary observations that lead to an essential reduction of the class of truncated balls over which
the supremum in \eqref{max_B} is taken.

\noindent \textbf{Observation 1.}
We may restrict to balls $\widetilde{B}(m,r)$ with radii uniformly bounded from below by a positive constant,
see Reduction 1 above. In addition we may assume that $a_1(\widetilde{B}) > 2$, see Reduction 2.

\noindent \textbf{Observation 2.}
We may further restrict to balls $\widetilde{B}(m,r)$ such that $a=a(\widetilde{B}) \in \partial C_+$.
(In particular, we exclude untruncated balls $\widetilde{B} = \mathbf{B}(m,r)$ entirely contained in $C_+$.)
Indeed, if $a \notin \partial C_+$, i.e., $a$ is in (the interior of) $C_+$,
then $m_1 = a_1 + r$ and $m'=a'$, and one considers the following two complementary cases.

If $1 \lesssim \sqrt{r} < a_1$, then $\nu(\widetilde{B}(m,r)) \simeq r^{(d-1)/2}e^{-a_1} \simeq \nu(\mathbf{B}(m,r))$
(for the last relation, see the proof of Lemma \ref{lem:balls}) and the result is a simple consequence of \cite[Theorem 3]{LiSj}.

On the other hand, letting $M_0$ be that part of the maximal operator $M$ given by restricting the supremum in \eqref{max_B}
to balls $\widetilde{B}(m,r)$ remaining after Observation 1 and such that $a(\widetilde{B}) \notin \partial C_+$ and
$a_1 \le \sqrt{r}$, we have the following.

\noindent \textbf{Claim:} $M_0$ is $L^p(d\nu)$-bounded for $1 < p < \infty$.

To justify the Claim, notice that any $\widetilde{B}$ under consideration contains a cylinder parallel to the $x_1$ axis,
with one face contained in $\pi_{a_1 + 1}$,
of essentially unit width and radius comparable to $a_1$, so $\nu(\widetilde{B}) \gtrsim a_1^{d-1}e^{-a_1}$.
Given that, consider the projections
\begin{align*}
d\tau(x_1) & = x_1^{d-1} e^{-x_1}\, dx_1, \\
F(x_1) & = \frac{1}{x_1^{d-1}} \int_{\pi_{x_1} \cap C_+} f(x_1,x')\, dx',
\end{align*}
of $d\nu$ and $f$, respectively, on the $x_1$ axis (here we omit multiplicative constants,
which are irrelevant for the argument). Notice that $\int f d\nu = \int F d\tau$.
Thus we have
$$
\frac{1}{\nu(\widetilde{B})} \int_{\widetilde{B}} f \, d\nu \lesssim \frac{1}{\tau(I_{a_1})} \int_{I_{a_1}} F\, d\tau,
$$
where $I_{a_1} = (a_1,\infty)$. Now observe that the one-dimensional maximal operator
$$
g(s) \mapsto \sup_{I \ni s} \frac{1}{\tau(I)} \int_I g \, d\tau
$$
(the supremum taken over all intervals $I \subset \mathbb{R}_+$ such that $s \in I$) is of weak type
$(1,1)$ with respect to the measure space $(\mathbb{R}_+,d\tau)$, and it controls $M_0$.
Therefore $M_0$ is of weak type $(1,1)$ with respect to $(C_+,d\nu)$, and the $L^p(d\nu)$-boundedness of $M_0$
follows by interpolation with the $L^{\infty}$-boundedness. This finishes proving the Claim and ends Observation 2.

Summing up, in the analysis of \eqref{max_B} we may assume that $\widetilde{B}=\widetilde{B}(m,r)$ is a ball such that
$m \in C_+$ and
\begin{equation} \label{obs12}
r>\sqrt{d}, \qquad a_1 > 2, \qquad a \in \partial C_+.
\end{equation}
By convention, we define the supremum in \eqref{max_B} as zero if there is no admissible ball $\widetilde{B}$ containing $x$.

We shall first prove the result in the simplest situation when the dimension $d=2$. This will give us some intuition needed
for higher dimensions.

\bigskip

\textbf{Dimension $\boldsymbol{d=2}$.}
When $d=2$ we write points simply $x=(x_1,x_2)$ rather than $x=(x_1,x')$.
Our rotated cone is $C_+ = \{ x \in \mathbb{R}^2 : |x_2| < x_1\}$.
We can assume that the balls $\widetilde{B}(m,r)$ under consideration are such that $m_2 \ge 0$, by symmetry.
Then $a(\widetilde{B}) = (\A,\A)$ with $\A > 2$, and also $r > \sqrt{2}$ and $m_2 \ge \A$; see \eqref{obs12}.
Notice that $r/\sqrt{2} < m_1 - \A \le r$ and, of course,
\begin{equation} \label{circ}
(m_1 - \A)^2 + (m_2 - \A)^2 = r^2.
\end{equation}
See Figure \ref{fig25}.
\begin{figure}[ht]
\includegraphics[height=0.5\textwidth]{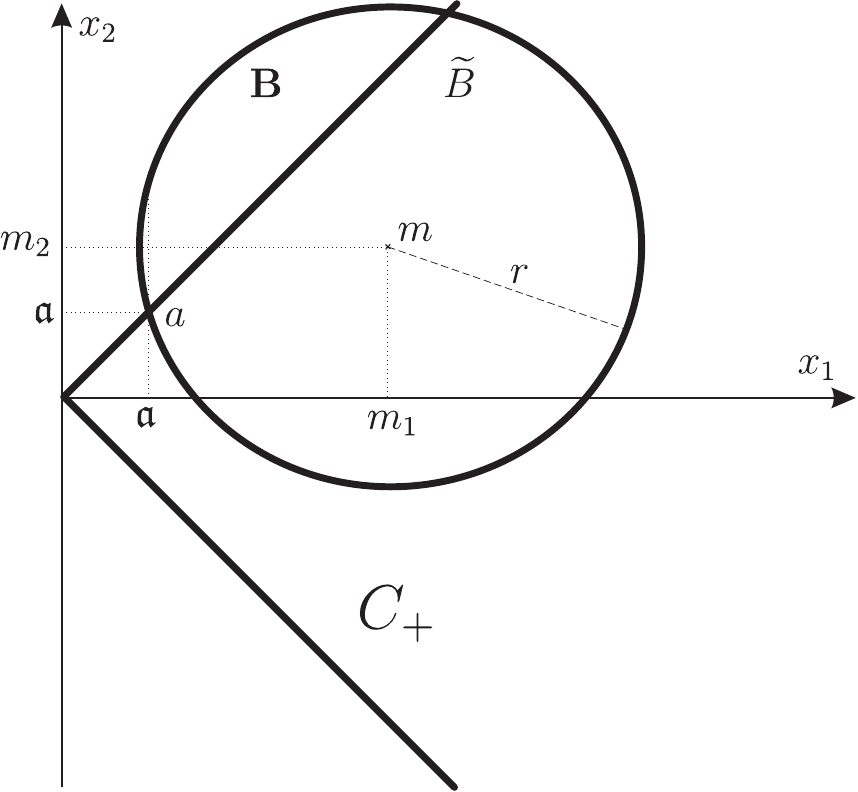}
\caption{The  situation for $d=2$.}
\label{fig25}
\end{figure}

We shall now split into cases.
In each case, we consider the maximal operator obtained by imposing some conditions on $\widetilde{B}$,
in addition to \eqref{obs12}.

\noindent \textbf{Case 1:} $\widetilde{B}$ contains the point $(\A+1,0)$.
Then $\nu(\widetilde{B}) \gtrsim \A e^{-\A}$, since $\widetilde{B}$ contains a rectangle of unit width and
height $\A$, with one of the vertical edges contained in $\pi_{\A+1}$. Thus the projection argument from Observation 2 gives
the desired conclusion.

\noindent \textbf{Case 2:} $\widetilde{B}$ does not contain the point $(\A+1,0)$.
We first find the lower intersection of the line $x_1 = \A + h$, $0 < h \le r/\sqrt{2}$, with $\partial \mathbf{B}$,
denoted $(\A+h, \A-\xi)$; here $\mathbf{B}$ is the untruncated prototype of $\widetilde{B}$ and  $\xi = \xi(h) > 0$.
Notice that the condition defining Case 2 can be written as $\xi(1) \le \A$.

We have
$$
(m_1 - \A - h)^2 + (m_2 - \A + \xi)^2 = r^2.
$$
Subtracting \eqref{circ} from this equation leads to
$$
\xi^2 + 2(m_2 - \A)\xi - 2(m_1 - \A)h + h^2 = 0.
$$
Dividing by $\xi^2$, solving for $1/\xi$ and taking into account that $\xi > 0$, we get
$$
\xi = \frac{2(m_1 - \A)h - h^2}{m_2 - \A + \sqrt{(m_2-\A)^2+2(m_1-\A)h - h^2}}.
$$
Note that $h < m_1 - \A$ (recall that $r/\sqrt{2} \le m_1-\A < r$). Then
$2(m_1-\A)h - h^2 \simeq (m_1-\A)h \simeq rh$. Consequently,
\begin{equation} \label{xiest}
\xi(h) \simeq \frac{rh}{m_2-\A + \sqrt{rh}} \simeq \frac{rh}{m_2-\A} \wedge \sqrt{rh}, \qquad 0 < h \le r/ \sqrt{2}.
\end{equation}
To estimate $\nu(\widetilde{B})$ from below, observe that $\widetilde{B}$
contains the triangle $T$ whose vertices are $(\mathfrak{a}, \mathfrak{a})$,  
$(\mathfrak{a}+1, \mathfrak{a}-\xi(1))$  and $(\mathfrak{a}+1, \mathfrak{a})$, and
$\nu(T)\simeq \xi(1) e^{-\A}$. Thus \eqref{xiest} implies
\begin{equation} \label{nest}
\nu(\widetilde{B}) \gtrsim \xi(1) e^{-\A} \simeq \Big( \frac{r}{m_2-\A} \wedge \sqrt{r}\Big) e^{-\A}.
\end{equation}

Next, we consider all $h > 0$ and estimate from above the measures of the intersections $\pi_{\A +h} \cap \shadow \widetilde{B}$,
where
$$
\shadow \widetilde{B} := \widetilde{B} + \big( \mathbb{R}_+ \times \{0\}\big)
$$
is the shadow of $\widetilde{B}$ in the positive $x_1$ direction.
By the geometry of the situation
and \eqref{xiest}, observing also that $m_2 - \A < r/\sqrt{2}$, we have
$$
\big| \pi_{\A+h} \cap \shadow\widetilde{B}\big| \le
		\begin{cases}
			\xi(h) + h, & h \le r/ \sqrt{2} \\
			2r, & h > r/ \sqrt{2}
		\end{cases} \Bigg\}
\lesssim
	\begin{cases}
		\frac{rh}{m_2-\A} \wedge \sqrt{rh}, & h \le r/ \sqrt{2} \\
		r, & h > r/ \sqrt{2}
	\end{cases} \Bigg\},
\qquad h > 0.
$$
Using this together with \eqref{nest}, by an elementary analysis of cases we see that
\begin{equation} \label{ratio_est}
\frac{|\pi_{\A+h}\cap \shadow\widetilde{B}|}{e^{\A}\nu(\widetilde{B})} \lesssim
	\begin{cases}
		\sqrt{h} + h, & h \le r/ \sqrt{2} \\
		\sqrt{r} + r, & h > r/ \sqrt{2}
	\end{cases} \Bigg\}
\lesssim \sqrt{h} + h \lesssim 1 + h, \qquad h > 0,
\end{equation}
uniformly in $\A$ and $\widetilde{B}$.

Now, let $M_2$ be the part of the maximal operator \eqref{max_B} under consideration, i.e., with the supremum taken
only over balls $\widetilde{B}$ considered in Case 2. We will apply the slicing argument from \cite{LiSj}.

Similarly as in \cite{LiSj}, consider the unit slices
$$
S_i = \{x \in C_+ : i < x_1 \le i+1 \}, \qquad i \ge 1.
$$
In $S_i$ one has $e^{-i-1}dx \le d\nu(x) \le e^{-i}dx$. Let
$$
M_2^k f(x) = \sum_{j-i=k} \chi_{S_j}(x) M_2(f\chi_{S_i})(x).
$$
Since $M_2 f \le \sum_{k \in \mathbb{Z}} M_2^kf$, it is enough to prove that
$\|M_2^kf\|_{L^p(d\nu)} \lesssim 2^{-\delta|k|/p} \|f\|_{L^p(d\nu)}$ with some $\delta > 0$, because
then one can sum the estimates and get the conclusion.
Thus we must show that
\begin{equation} \label{nkey_est}
\int_{S_j} \big[M_2(f\chi_{S_i})\big]^p\, d\nu \lesssim e^{-\delta |j-i|} \int_{S_i} f^p \, d\nu,
	\qquad i,j \ge 1.
\end{equation}

With $i,j \ge 1$, we let $x \in S_j$ and $\widetilde{B}$ be a ball containing $x$, and
we will estimate first the mean
$$
\frac{1}{\nu(\widetilde{B})} \int_{\widetilde{B}} \chi_{S_i}(y) f(y)\, d\nu(y).
$$
In our situation $x \in \widetilde{B} \cap S_j$ and
$y \in \widetilde{B} \cap S_i$.
Observing that the sets $\{z_2 \in \mathbb{R}: \exists z_1 \; (z_1,z_2) \in \pi_{\A+h} \cap \shadow \widetilde{B}\}$
form an increasing family of intervals with respect to $h > 0$, we get
$$
|y_2-x_2| \le \big|\pi_{i+1} \cap \shadow\widetilde{B}\big| \vee \big|\pi_{j+1}\cap\shadow\widetilde{B}\big|
	= \big|\pi_{i\vee j + 1} \cap \shadow\widetilde{B}\big|;
$$
notice that here $i,j \ge 1 \vee (\A-1) = \A - 1$.
Then, using \eqref{ratio_est}, we obtain
\begin{align} \nonumber
\frac{1}{\nu(\widetilde{B})} \int_{\widetilde{B}} \chi_{S_i}(y) f(y)\, d\nu(y) &
	\quad \le
	\frac{1}{\nu(\widetilde{B})}
	\int_i^{i+1} e^{-i} \int_{|y_2-x_2| < |{\pi_{i\vee j + 1}} \cap\, \shadow\widetilde{B}|}
		f(y_1,y_2)\, dy_2\, dy_1 \\ \nonumber
& \quad \le 2 \,
	\frac{|{\pi_{i\vee j + 1}} \cap \shadow\widetilde{B}|}{\nu(\widetilde{B})}
	\int_i^{i+1} e^{-i} \int_i^{i+1} \mathcal{M}f(y_1,x_2)\, dy_1 \\
& \quad \lesssim \big[ 1 + (i \vee j - \A +1)\big] e^{\A-i} \int_i^{i+1} \mathcal{M}f(y_1,x_2)\, dy_1, \label{n420}
\end{align}
where the implicit multiplicative constant is independent of $i,j \ge \mathfrak{a}-1$, the ball $\widetilde{B}$ and the point
$x \in \widetilde{B} \cap S_j$, and of $f$.
Here $\mathcal{M}$ is the one-dimensional non-centered Hardy-Littlewood maximal function
acting on the second coordinate. Note that $\mathcal{M}$ is bounded on $L^p(\mathbb{R},dx_2)$ for $p > 1$.

We now estimate the factor in front of the integral in \eqref{n420}. Write
$$
\A-i-1 = \frac{j-i}{p} - \frac{1}{p'}(i-\A+1)-\frac{1}p (j-\A+1) \le \frac{j-i}{p}
	- \Big( \frac{1}{p} \wedge \frac{1}{p'}\Big) [i \vee j- \A +1],
$$
where the last inequality follows from the bound $i \wedge j \ge \A-1$.
Thus
\begin{align*}
\big[ 1 + ( i\vee j - \A + 1)\big] e^{\A-i}
& \le e^{j/p- i/p +1} e^{-2\epsilon [i\vee j - \A +1]} \big[1+(i\vee j - \A +1)] \\
& \lesssim e^{j/p- i/p} e^{-\epsilon [i\vee j - \A +1]}\\
& \le e^{j/p- i/p} e^{-\epsilon |i-j|},
\end{align*}
with $\epsilon = \frac{1}{2}(\frac{1}p \wedge \frac{1}{p'})$, uniformly in $\A>2$ and $i,j \ge \A-1$.

With the bound just obtained, taking the supremum of the left-hand side of \eqref{n420} and using H\"older's inequality
on the right-hand side there, we arrive at
$$
\chi_{S_j}(x) M_2(f\chi_{S_i})(x) \lesssim e^{j/p- i/p} e^{-\epsilon |i-j|} \chi_{S_j}(x)
	\bigg(\int_{i < y_1 < i+1} \big[\mathcal{M}f(y_1,x_2)\big]^p \, dy_1\bigg)^{1/p}.
$$
Raising to power $p$ and integrating this estimate in $x=(x_1,x_2) \in S_j$ we further get
$$
\int_{S_j} \big[M_2(f\chi_{S_i})(x)\big]^p \,e^{-j} dx \lesssim e^{-\epsilon p |j-i|}
	\int_{i < y_1 < i+1} \int_{S_j} \big[\mathcal{M}f(y_1,x_2)\big]^p \, dx_1\,  dx_2 \, e^{-i}\,  dy_1.
$$
Finally, we use the $L^p$-boundedness of $\mathcal{M}$ to write
$$
\int_{\mathbb{R}} \big[\mathcal{M}f(y_1,x_2)\big]^p \, dx_2 \lesssim \int_{\mathbb{R}} f^p(y_1,y_2)\, dy_2,
\qquad y_1 \in (i,i+1),
$$
and \eqref{nkey_est} with $\delta = \epsilon p$ follows. This finishes the proof in the case of dimension $d=2$.

\begin{rem*}
Cases 1 and 2 considered above can be merged. Indeed, right after \eqref{circ} we can estimate $\xi(h)$, as it was done
in Case 2, getting \eqref{xiest}. Then it follows that
$$
\nu(\widetilde{B}) \gtrsim \big[ \A \wedge \xi(1) \big] e^{-\A} \simeq
	\Big( \A \wedge \frac{r}{m_2-\A} \wedge \sqrt{r}\Big) e^{-\A}.
$$
Further, we can estimate measures of the intersections $\pi_{\A+h} \cap \shadow \widetilde{B}$ as
(observe that the expression $2(\A + h)$ appears as the measure of $C_{+} \cap \pi_{\A+h}$)
\begin{align*}
\big| \pi_{\A+h} \cap \shadow\widetilde{B}\big| & \le
		\begin{cases}
			2(\A+h)\wedge(\xi(h) + h), & h \le r/ \sqrt{2} \\
			2r, & h > r/ \sqrt{2}
		\end{cases} \Bigg\} \\
& \lesssim
	\begin{cases}
		(\A+h) \wedge \frac{rh}{m_2-\A} \wedge \sqrt{rh}, & h \le r/ \sqrt{2} \\
		r, & h > r/ \sqrt{2}
	\end{cases} \Bigg\},
\qquad h > 0.
\end{align*}
Using this together with an elementary analysis of cases we get the key bound
$$
\frac{|\pi_{\A+h}\cap \shadow\widetilde{B}|}{e^{\A}\nu(\widetilde{B})} \lesssim
	1 + h, \qquad h > 0,
$$
uniformly in $\A$ and $\widetilde{B}$.
From here the slicing argument goes as described in Case 2 above.
\end{rem*}

\bigskip

\textbf{Dimension $\boldsymbol{d=3}$.}
From now on we will write points $x=(x_1,x')$, with $x_1 > 0$ and $x' \in \mathbb{R}^2$.
Our fixed rotated cone $C_+$ is contained in $\mathbb{R}_+ \times \mathbb{R}^2$, its vertex is the origin of $\mathbb{R}^3$,
and its central axis is the $Ox_1$ axis.
For any $\xi>0$, the intersection $C_+ \cap \pi_{\xi}$ is an open equilateral triangle of side $\sqrt{6}\,\xi$.

In order to estimate $Mf$ defined in \eqref{max_B}, we let $\widetilde{B} = \widetilde{B}(m,r)$ be a truncated
ball with $m \in C_+$ verifying \eqref{obs12}.

For any set $E \subset \mathbb{R}^3$, we define its shadow in the direction of the $x_1$ axis as
$$
\shadow E := E +  \{(s,0,0) : s > 0\}.
$$

We claim that 
\begin{equation}\label{m1-a1}
1 < r\slash \sqrt{3} \le m_1 - a_1 \le r,
\end{equation}
where only the second inequality needs to be verified. For this we fix $m_1$ and $a_1$ and use Figure \ref{fig99}.
\begin{figure}[ht]
\includegraphics[height=0.4\textwidth]{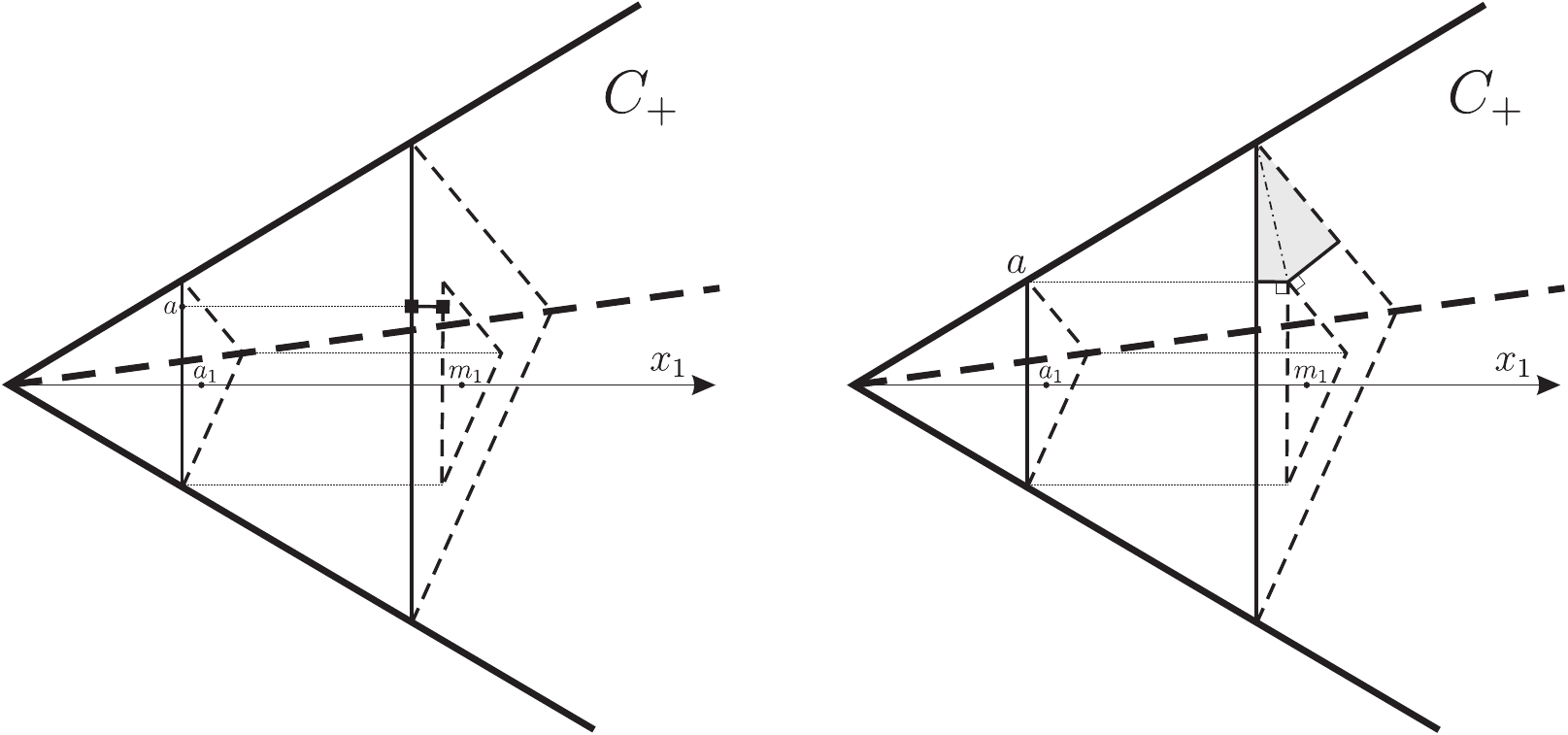}
\caption{Positions of $a$ and $m$, $d=3$.}
\label{fig99}
\end{figure}
Each part of this figure shows the triangles $\pi_{a_1} \cap \partial C_+$ and $\pi_{m_1} \cap \partial C_+$,
and inside the latter the triangle $\pi_{m_1} \cap \shadow(\pi_{a_1} \cap \partial C_+)$.
Notice that the point $m$ cannot be in the interior of this last triangle, since $a$ is on the
boundary of $C_+$. Given the position of $a$, the figure illustrates the possible positions of $m$.
To the left, $a$ is on an open face of the cone $C_+$, and then $m$ is seen to be in the short,
closed segment indicated. In the right-hand part of the figure, $a$ is on an edge of $C_+$, and $m$
has to belong to the closed quadrilateral marked in the figure. From this, we see that the
minimal value of the quotient $(m_1 - a_1)\slash r = (m_1 - a_1)\slash |m - a|$ occurs when $a$ and $m$ are
situated on the same edge of $C_+$, and then the quotient equals $1\slash \sqrt{3}$. We have verified the claim  \eqref{m1-a1}.

For $0 \le h < r+(m_1-a_1)$ we define
$$
C_h = \big\{ x' \in \mathbb{R}^2 : (a_1+h,x') \in C_+\big\} \qquad \textrm{and} \qquad
B_h = \big\{ x' \in \mathbb{R}^2 : (a_1+h,x') \in \mathbf{B} \big\},
$$
with $\mathbf{B}= \mathbf{B}(m,r)$. Observe that $B_h$ would be empty if defined in this way for $h \ge r+(m_1-a_1)$.

\medskip

\noindent \textbf{Case I:} $a(\widetilde{B})$ lies on an edge of $C_+$.

We intersect $C_+$ and $\mathbf{B}$ with $\pi_{a_1}$, see Figure \ref{fig3r}.
\begin{figure}[ht]
\includegraphics[height=0.5\textwidth]{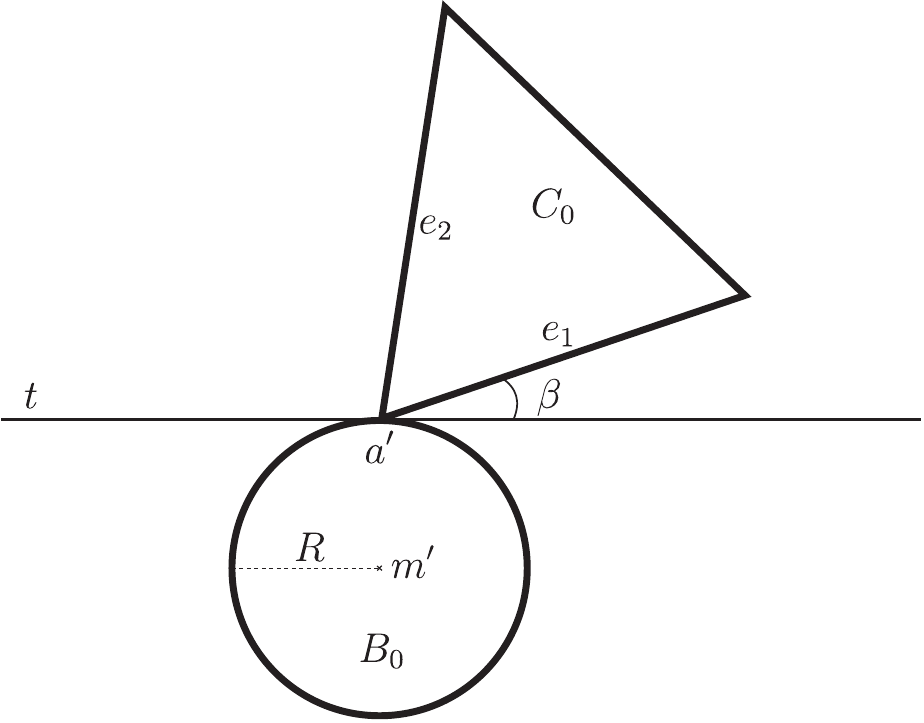}
\caption{The plane $\pi_{a_1}$, $d=3$.}
\label{fig3r}
\end{figure}
Then $C_0$ is an equilateral triangle with one vertex at $a'$, and $B_0$ is an open disc with center $m'$ and radius $R$ satisfying
\begin{equation} \label{eq:p1r}
(m_1-a_1)^2 + R^2 = r^2.
\end{equation}
The definition of $a$ implies that $a' \in \partial B_0 \cap \partial C_0$
and also that the tangent line, denoted $t$, to $B_0$ through $a'$ does not intersect $C_0$. Thus $R = |a'-m'|$.

The point $a'$ is the endpoint of two edges of $C_0$, and we consider the angles at $a'$ between $t$ and these two edges.
Let $\beta$ denote the smallest such angle and let $e_1$ be the corresponding edge. Then $0 \le \beta \le \pi/3$, and
the other edge $e_2$ forms the angle of $\beta + \pi/3$ with the same tangent.

We now consider the intersection of $\mathbf{B}$ and $\shadow(C_+ \cap \pi_{a_1})$ with
the plane $\pi_{a_1+h}$, assuming that $0 < h \le r/\sqrt{3}$; see Figure \ref{fig4r}. Then $a'$ is an inner point of the disc $B_h$.
\begin{figure}[ht]
\includegraphics[height=0.5\textwidth]{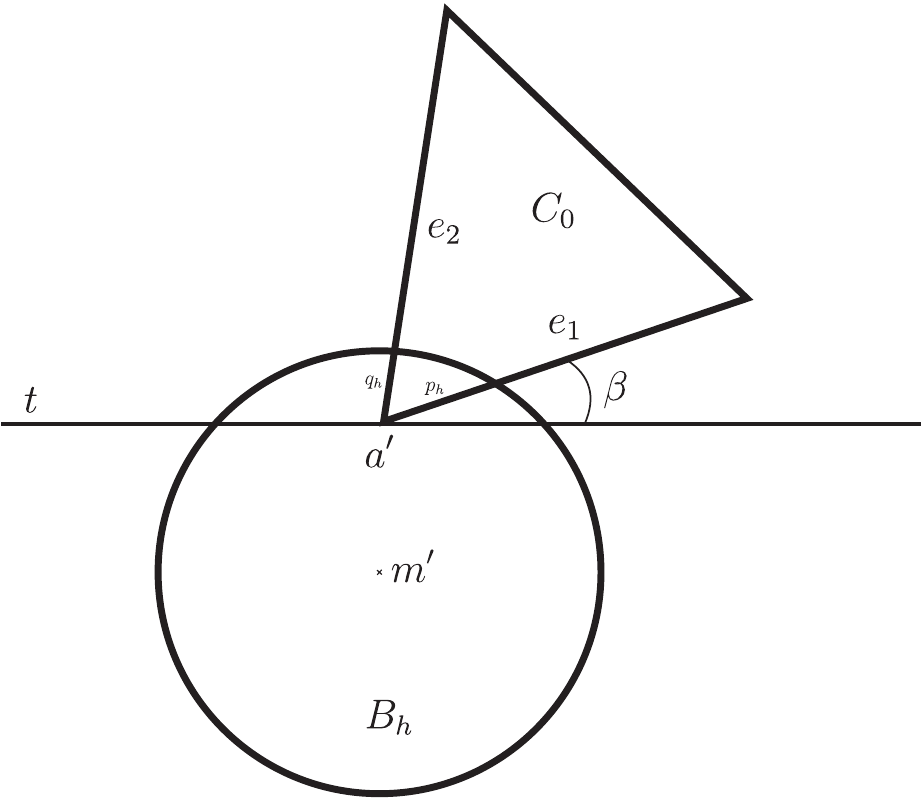}
\caption{The plane $\pi_{a_1+h}$
(formally, here $C_0 = \shadow(C_+ \cap \pi_{a_1}) \cap \pi_{a_1+1}$), \hskip3pt $d=3$.}
\label{fig4r}
\end{figure}
From $a'$ we move first along the edge $e_1$ and then possibly continue beyond it in the same direction until we hit
$\partial B_h$, say at distance $p_h$ from $a'$. Then
\begin{equation} \label{eq:p2r}
(m_1 - a_1 - h)^2 +  p_h^2 \cos^2\beta + (R + p_h \sin\beta)^2 = r^2.
\end{equation}
Subtracting \eqref{eq:p1r}, we get
\begin{equation*}
p_h^2 + 2p_h R \sin\beta - 2(m_1-a_1)h + h^2 = 0.
\end{equation*}
We rewrite this as a quadratic equation in $1\slash p_h$ which we solve, getting
$$
p_h = \frac{2(m_1-a_1)h - h^2}{\sqrt{R^2 \sin^2\beta + 2(m_1-a_1)h - h^2}+R \sin\beta}.
$$
Since $h \le r/\sqrt{3} < m_1-a_1 \le r$, we see that
\begin{equation} \label{phr}
  p_h \simeq \frac{(m_1-a_1)h}{\sqrt{(m_1-a_1)h}+ R \sin\beta} \simeq \frac{rh}{\sqrt{rh}+ R\sin\beta}
	\simeq \sqrt{rh} \wedge \frac{rh}{R\sin\beta}.
\end{equation}

We next repeat the above, but moving in the direction of $e_2$ until we leave $B_h$,
after covering a distance $q_h$, say. The same argument applies, but instead of $\beta$ we have
$\beta + \pi/3 \in [\pi/3,2\pi/3]$. The result is
\begin{equation} \label{qhr}
q_h \simeq \sqrt{rh} \wedge \frac{rh}{R}.
\end{equation}

We now estimate the measure of $\widetilde{B}$ from below.
Consider for the time being only $h \in (1/2,1)$. Then $p_h \simeq p_1$ and $q_h \simeq q_1$.
Thus we can find one point on each edge $e_1$ and $e_2$ belonging to the closure of $B_h \cap C_h$ whose distance from $a'$
is comparable to $a_1 \wedge p_1$ and $a_1 \wedge q_1$, respectively (recall that $|e_1|=|e_2|\simeq a_1$).
The triangle formed  by these two points and $a'$ is also
contained in $B_h \cap C_h$ by convexity, and its area is comparable to $(a_1 \wedge p_1)(a_1\wedge q_1)$.
Integrating over $1/2 < h < 1$, we see that
\begin{equation} \label{eq:p3r}
\nu(\widetilde{B}) \gtrsim (a_1 \wedge p_1) (a_1 \wedge q_1) e^{-a_1}.
\end{equation}

Next, we consider all $h \in (0, r+(m_1-a_1))$. We shall need the following.

\begin{propo} \label{lem:fcl}
There is an increasing family of parallelograms $\{\mathcal{P}_h : 0 < h < r+(m_1-a_1)\}$ in $\mathbb{R}^2$
with sides parallel to $e_1$ and $e_2$ and side lengths controlled (up to multiplicative absolute constants)
by $(a_1 + h) \wedge p_h$ and $(a_1 + h) \wedge q_h$, respectively, in case $0 < h \le r/3$, and by $r$ in case $h > r/3$, such that
$B_h \cap C_h \subset \mathcal{P}_h$ for  $0 < h < r+(m_1-a_1)$.
\end{propo}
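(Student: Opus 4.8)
The plan is to introduce oblique coordinates adapted to the two edges $e_1,e_2$ and to take every $\mathcal{P}_h$ to be a coordinate rectangle in those coordinates — such a rectangle is automatically a parallelogram with sides parallel to $e_1$ and $e_2$, and the family will be increasing as soon as its lower endpoints are non-increasing and its upper endpoints non-decreasing in $h$. Concretely, I would place $a'$ at the origin of $\mathbb{R}^2$ and choose Cartesian axes so that the tangent line $t$ to $B_0$ at $a'$ is the $x$-axis and the triangle $C_0$ lies in the closed upper half-plane; then, as in the derivation of \eqref{eq:p1r}, $B_h$ is the disc of radius $R_h=\sqrt{r^2-(m_1-a_1-h)^2}$ about the \emph{fixed} point $m'$, which lies on the lower side of $t$ at distance $R=|a'-m'|$ from the origin, so that $-m'=(0,R)$ and $R_h^2=R^2+2(m_1-a_1)h-h^2$. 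Writing $u_1=(\cos\beta,\sin\beta)$ and $u_2=(\cos(\beta+\pi/3),\sin(\beta+\pi/3))$ for the unit directions of $e_1$ and $e_2$ (note $\sin\beta\ge0$ and $\sin(\beta+\pi/3)\simeq 1$), I would work with the coordinates $(s,t)$ given by $x'=su_1+tu_2$ and look for $\mathcal{P}_h=[s_-(h),s_+(h)]\times[t_-(h),t_+(h)]$. Since $C_+$ has vertex at the origin of $\mathbb{R}^3$ with axis $Ox_1$, the slice $C_h$ is the image of $C_0$ under the homothety of ratio $(a_1+h)/a_1$ centred at the centroid of $C_0$, and an elementary computation in these coordinates gives
\begin{equation*}
C_h=\Big\{(s,t):\ s\ge-\tfrac{\sqrt6}{3}h,\ \ t\ge-\tfrac{\sqrt6}{3}h,\ \ s+t\le\sqrt6\,a_1+\tfrac{\sqrt6}{3}h\Big\},
\end{equation*}
so that on $C_h$ one has $-\tfrac{\sqrt6}{3}h\le s\le\sqrt6(a_1+h)$, and likewise for $t$; this fixes $s_-(h)=t_-(h)=-\tfrac{\sqrt6}{3}h$ and already accounts for the factor $a_1+h$ in the side lengths.

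The heart of the matter is to estimate $s$ and $t$ on the ball slice in the regime $h\le r/3$. For $P=su_1+tu_2\in B_h$, expanding $|P-m'|^2<R_h^2$ and using $|m'|=R$ together with $-m'=(0,R)$ yields
\begin{equation*}
|P|^2+2R\big(s\sin\beta+t\sin(\beta+\tfrac{\pi}{3})\big)<2(m_1-a_1)h-h^2.
\end{equation*}
If in addition $P\in C_h$, then $s,t\ge-\tfrac{\sqrt6}{3}h$ and, recalling $h\le r/3<m_1-a_1\le r$ from \eqref{m1-a1}, the right-hand side is $\simeq(m_1-a_1)h\simeq rh$, while the term $2Rt\sin(\beta+\pi/3)$ costs at worst $-O(rh)$ and $|P|^2=s^2+t^2+st\gtrsim s^2$ once $s\gtrsim h$; since $p_h\gtrsim h$ by \eqref{phr} (as $R\le r$ and $h\le r$), the remaining case $s\lesssim h$ is harmless. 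Hence the displayed inequality forces $s^2+R\sin\beta\,s\lesssim rh$, and solving this quadratic gives $s\lesssim\sqrt{rh}\wedge\frac{rh}{R\sin\beta}\simeq p_h$ in view of \eqref{phr}. Interchanging the roles of $u_1$ and $u_2$, now using $\sin(\beta+\pi/3)\simeq1$ and \eqref{qhr}, gives $t\lesssim q_h$ on $B_h\cap C_h$. Combining with the cone bounds, for $0<h\le r/3$ I would take $s_+(h)=C[(a_1+h)\wedge p_h]$ and $t_+(h)=C[(a_1+h)\wedge q_h]$ with $C$ an absolute constant; both are non-decreasing in $h$ because $a_1+h$, $p_h$ and $q_h$ are, and the corresponding rectangle contains $B_h\cap C_h$ with side lengths of the asserted order.

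For $h>r/3$ the disc $B_h$ has radius $R_h\le r$ and centre $m'$ within distance $R\le r$ of $a'$, so $B_h\cap C_h$ sits inside the disc of radius $2r$ about $a'$, hence inside a coordinate rectangle of side $\lesssim r$; I would simply set $s_+(h)=t_+(h)=Cr$ there, enlarging $C$ if necessary so that $Cr$ exceeds the values $s_+(r/3),t_+(r/3)\lesssim r$ reached in the first regime (note $p_{r/3}\simeq q_{r/3}\simeq r$, since $R\sin\beta\le r$). Together with $s_-(h)=t_-(h)=-\tfrac{\sqrt6}{3}h$, this produces a single increasing family $\{\mathcal{P}_h\}$ enjoying all the stated properties.

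I expect the one genuinely delicate point to be the passage from $P\in B_h\cap C_h$ to $s\lesssim p_h$: that is exactly where the lower bounds $s,t\gtrsim-h$ coming from the cone have to be fed back into the equation of the sphere, and where the two branches $R\sin\beta\le\sqrt{rh}$ and $R\sin\beta\ge\sqrt{rh}$ of the minimum in \eqref{phr} must be reconciled. Everything else is elementary plane geometry, parallel to the analogous slicing in \eqref{eq:p3r} and to the $d=2$ discussion.
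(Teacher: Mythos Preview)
Your argument is correct and yields the proposition, but the execution differs from the paper's. The paper anchors the parallelogram at the moving vertex $a'_h$ of $C_h$ and builds it geometrically: it splits into the cases $\tau(a'_h)\le 0$ and $\tau(a'_h)>0$, and in the latter constructs an enclosing parallelogram $\mathcal P_h^*$ by hand, computing the length $\mathfrak p_h$ of the segment from $a'_h$ to $\partial B_h$ along $e_1$ and showing $\mathfrak p_h\simeq p_h$ via an explicit quadratic. Your approach instead introduces oblique coordinates $(s,t)$ adapted to $e_1,e_2$, reads $C_h$ as a coordinate triangle, and extracts the bound on $s$ directly from the single inequality $|P|^2+2R(s\sin\beta+t\sin(\beta+\pi/3))<2(m_1-a_1)h-h^2$ combined with the cone constraint $t\ge -\tfrac{\sqrt6}{3}h$. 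This avoids the paper's case split entirely and is a bit more streamlined; the price is that the geometric picture (the lines $w$, $u$, the point $b'_h$) is hidden in the algebra.

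Two small remarks on your write-up. First, the caveat ``$|P|^2\gtrsim s^2$ once $s\gtrsim h$'' is unnecessary: since $s^2+t^2+st=(t+s/2)^2+\tfrac34 s^2$, one has $|P|^2\ge \tfrac34 s^2$ for all real $s,t$, so the quadratic inequality $\tfrac34 s^2+2R s\sin\beta\lesssim rh$ follows with no side condition (the case $s<0$ being trivial). Second, for the monotonicity of $s_+(h)=C[(a_1+h)\wedge p_h]$ on $(0,r/3]$ it is cleanest to note that the exact formula rationalizes to $p_h=\sqrt{R^2\sin^2\beta+2(m_1-a_1)h-h^2}-R\sin\beta$, which is manifestly increasing for $h<m_1-a_1$; likewise for $q_h$. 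With these in hand, your choice of $s_\pm,t_\pm$ gives an increasing family, and since $p_h\gtrsim h$ the additive term $\tfrac{\sqrt6}{3}h$ coming from $s_-(h)$ does not spoil the side-length estimate.
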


\begin{proof}
Consider first $h \le r/3$. The triangle $C_h$ is a concentric scaling of $C_0$,
and all its points have a distance of at most $\sqrt{2}\,h$ from $C_0$.
In particular, $C_h$ has a vertex $a_h'$, corresponding to $a'$,
which is at the distance $\sqrt{2}\,h \sin(\beta+\pi/6)$ from $t$, and at the distance $\sqrt{2}\,h \cos(\beta + \pi/6)$ from the line
perpendicular to $t$ and passing through $a'$ (and $m'$); see Figure \ref{fig5r}.
\begin{figure}[ht]
\includegraphics[height=0.5\textwidth]{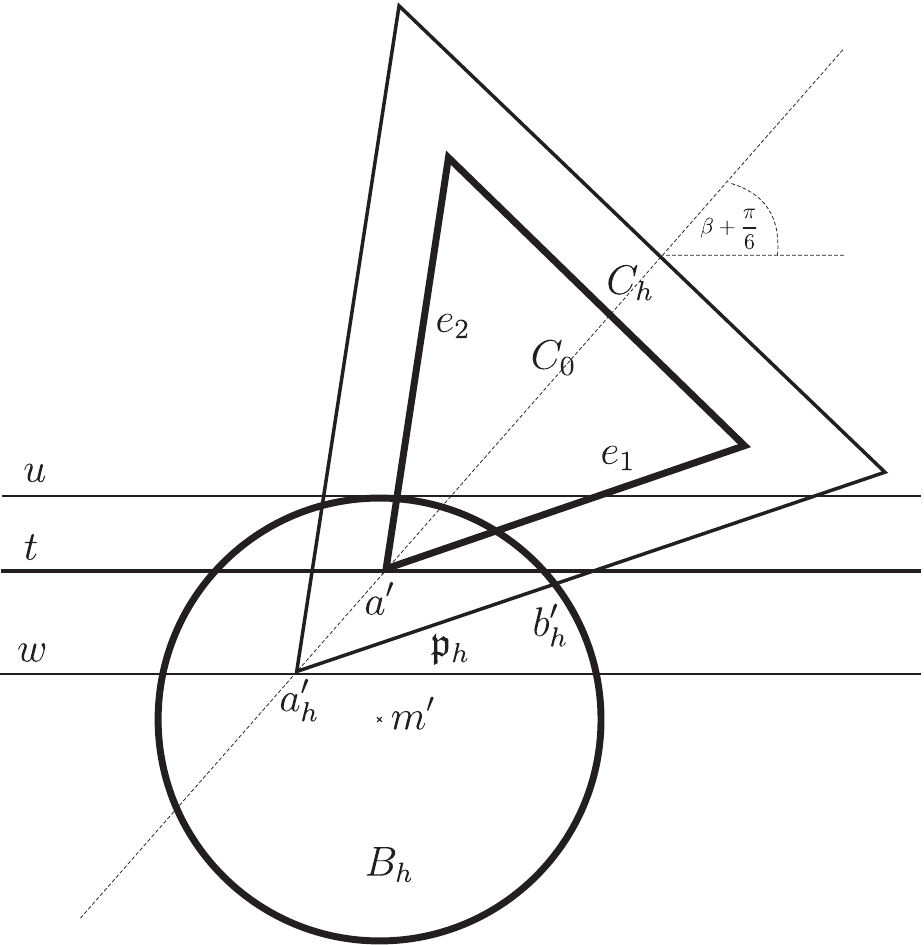}
\caption{The plane $\pi_{a_1+h}$ (formally, here $C_0 = \shadow(C_+ \cap \pi_{a_1}) \cap \pi_{a_1+1}$), \hskip3pt $d=3$.}
\label{fig5r}
\end{figure}
Bring in the ``vertical coordinate''
\begin{equation}\label{vertical}
  \tau(x') = \langle x'-m', a'-m'\rangle
\end{equation}
in the plane $\pi_{a_1+h}$.

We take as $\mathcal{P}_h$ the smallest open parallelogram having one vertex at $a'_h$, sides parallel to $e_1$ and $e_2$,
and containing $B_h \cap C_h$. Then we must show that the side lengths of $\mathcal{P}_h$ are controlled by
$(a_1+h) \wedge p_h$ and $(a_1+h) \wedge q_h$. We shall separate the cases depending when $a'_h$ lies below or above
the level of $m'$, see Figure \ref{fig5r}.

Assume first that $\tau(a'_h) \le 0$, i.e., $a'_h$ does not exceed the level of $m'$. This means that
$\sqrt{2}\,h \sin(\beta+\pi/6) \ge |a' - m'| = R$, which implies $h \ge R/\sqrt{2}$. In this situation,
see \eqref{phr} and \eqref{qhr}, one has $p_h \simeq \sqrt{rh} \simeq q_h$. On the other hand,
the radius of $B_h$ is also comparable with $R+ q_h \simeq \sqrt{rh}$.

Now, observe that the edges of $\mathcal{P}_h$ have lengths controlled by the quantity
$$
(\textrm{side length of}\; C_h) \wedge (\textrm{radius of} \; B_h ),
$$
thus by $(a_1+h)\wedge \sqrt{rh} \simeq (a_1+h)\wedge p_h \simeq (a_1+h)\wedge q_h$, as desired.

Next, assume that $\tau(a'_h) > 0$, i.e., $a'_h$ is above the level of $m'$.
We shall construct a parallelogram $\mathcal{P}_h^*$ containing $\mathcal{P}_h$, having vertex at $a'_h$ and sides
parallel to $e_1$ and $e_2$, whose side lengths satisfy the desired estimates. Clearly, this will be enough for our purpose.

In the plane $\pi_{a_1+h}$, let $w$ be the line through $a'_h$ parallel to $t$.
Define $u$ to be the line parallel to $w$ given by $u = \{x' : \tau(x') = (\sup_{B_h}\tau) \wedge (\sup_{C_h}\tau)\}$.
Observe that two cases may occur (call them (a) and (b), respectively):
$u$ is tangent to $B_h$ (if the last minimum is realized by $\sup_{B_h}\tau$, see Figure \ref{fig5r})
or $u$ passes through the vertex of $C_h$ of maximal distance from $w$.

The intersection $B_h \cap C_h$ is contained in the band between $w$ and $u$.
In case (a), the width of this band, see Figure \ref{fig5r}, is not larger than (actually comparable with)
$q_h + \sqrt{2}\,h \sin(\beta+\pi/6)$, and this quantity, in view of \eqref{qhr}, is comparable to $q_h$.
In case (b), the width of this band is comparable with the side length of $C_h$, i.e., with $a_1 + h$.

Now consider the segment along the $e_2$ direction with one endpoint at $a_h'$, and whose
other endpoint lies on $u$. Since $e_2$ forms the angle $\beta+\pi/3$ with $u$, which is separated from $0$, the segment
in question has length comparable with the width of the band, thus with $(a_1+h) \wedge q_h$.
We take this segment as a side of our $\mathcal{P}_h^*$.

As the other side of $\mathcal{P}_h^*$ we shall take the segment along the $e_1$ direction with one endpoint
at $a_h'$, and the other endpoint $b_h'$ lies either on the boundary of $B_h$, inside the band, or is the vertex of
$C_h$ in case $B_h$ is so large that $\partial B_h$ does not cross this ($e_1$-directed) side of $C_h$.
See again Figure \ref{fig5r}.
Denote by $\mathfrak{p}_h$ the length of this segment.
Clearly, $\mathfrak{p}_h$ is comparable with $a_1+h$ in case $b_h'$ is the vertex.
Assuming the other case $b_h' \in \partial B_h$, we will show that $\mathfrak{p}_h$ is comparable to $p_h$,
a fact that is intuitively clear from the picture.
Since $\mathcal{P}_h^*$ just defined contains$^\ddag$ $B_h \cap C_h$,
this will finish the reasoning when $h\le r/3$.
\footnotetext{$\ddag$
This inclusion is seen from the geometry of the situation, see Figure \ref{fig5r}.
Perhaps the least obvious point is to ensure that in the case when $b_h' \in \partial B_h$
the edge of $\mathcal{P}_h^*$ starting at $b_h'$ and parallel to $e_2$ does not cross $B_h$.
Indeed, this is true since the outward normal of $B_h$ at $b'_h$ enters into $C_h$. Thus the angle between this normal
and $e_2$ is less than $\pi/3$, and the inclusion follows.}

Observe that, cf.\ \eqref{eq:p2r},
\begin{equation*} 
(m_1-a_1-h)^2 + \big( \mathfrak{p}_h \cos\beta - \sqrt{2}\,h\cos(\beta+\pi/6) \big)^2
	+ \big( \mathfrak{p}_h \sin\beta + R - \sqrt{2}\,h\sin(\beta+\pi/6)\big)^2 = r^2.
\end{equation*}

Subtracting \eqref{eq:p1r} and solving for $1\slash \mathfrak{p}_h$ (see the analysis leading from \eqref{eq:p2r} to \eqref{phr}),
we get after some elementary computations and applications of basic trigonometric identities
$$
\mathfrak{p}_h = \frac{2(m_1-a_1)h + 2\sqrt{2}R h \sin(\beta+\pi/6) - 3h^2}{\sqrt{R^2\sin^2\beta + 2h(m_1-a_1)
	+ \sqrt{2} R h \cos\beta - 3h^2/2} + R\sin\beta - \sqrt{2}\sqrt{3}\,h/2}.
$$
Then, recalling that $r/\sqrt{3} < m_1-a_1 \le r$, $R < r$ and $h \le r/3$, we arrive at
$$
\mathfrak{p}_h \simeq \frac{rh}{R\sin\beta} \wedge \sqrt{rh} \simeq p_h.
$$

Considering $h > r /3$, take as $\mathcal{P}_h$ the smallest (open) parallelogram,
with sides parallel to $e_1$ and $e_2$, containing both $\mathcal{P}_{r/3}$ and $B_{m_1-a_1}$.
This parallelogram has side lengths comparable to $r$, by the geometry of the situation.

The fact that the family $\{\mathcal{P}_h : h > 0\}$ is increasing is clear from the construction.
Proposition \ref{lem:fcl} follows.
\end{proof}

In view of \eqref{eq:p3r}, for $\mathcal{P}_h$ from Proposition \ref{lem:fcl} we have the bound
\begin{equation*}
\frac{|\mathcal{P}_h|}{e^{a_1} \nu(\widetilde{B})} \lesssim
	\begin{cases}
		\frac{[(a_1+h)\wedge p_h][(a_1+h)\wedge q_h]}{(a_1 \wedge p_1)(a_1\wedge q_1)}, & 0 < h \le r\slash 3, \\
		\frac{r^2}{(a_1 \wedge p_1)(a_1\wedge q_1)}, & h > r/3.
	\end{cases}
\end{equation*}
To estimate the right-hand side here we use \eqref{phr} and \eqref{qhr}, and apply an elementary analysis of cases.
Considering $h \le r/3$, if $a_1 \wedge p_1 = a_1$, then (recall that $a_1 > 2$)
$$
\frac{(a_1+h)\wedge p_h}{a_1 \wedge p_1} \le \frac{a_1 + h}{a_1} < 1 + h;
$$
if $a_1 \wedge p_1 = p_1$, then
$$
\frac{(a_1+h)\wedge p_h}{a_1 \wedge p_1} \le \frac{p_h}{p_1} \simeq
	\frac{\frac{rh}{R\sin\beta} \wedge \sqrt{rh}}{\frac{r}{R\sin\beta}\wedge \sqrt{r}}
	\le h \vee \sqrt{h} \lesssim 1 + h.
$$
For $h > r/3$ we have
$$
\frac{r}{a_1 \wedge p_1} \simeq \frac{r}{a_1 \wedge \frac{r}{R\sin\beta} \wedge \sqrt{r}}
	\le r \vee (R\sin\beta) \vee \sqrt{r} \lesssim 1 + h.
$$
The factors involving $q_h$ are treated similarly. Thus we arrive at the key bound
\begin{equation} \label{eq:p7r}
\frac{|\mathcal{P}_h|}{e^{a_1} \nu(\widetilde{B})} \lesssim 1 + h^2, \qquad h > 0,
\end{equation}
uniformly in $a_1$ and $\widetilde{B}$.

We are now in a position to apply the slicing argument.
Let $M$ be the part of the maximal operator \eqref{max_B} under consideration.
As in dimension $2$, we define $S_{i}$ for $i \ge 1$ as $\{x \in C_+ : i < x_1 \le i+1\}$, and in $S_i$,
$e^{-i-1}dx \le d\nu(x) < e^{-i}dx$.
It is enough to prove that for some constant $\delta > 0$
\begin{equation} \label{4.X}
\int_{S_j} \big[M(f\chi_{S_i})\big]^p\, d\nu \lesssim e^{-\delta |j-i|} \int_{S_i} f^p\, d\nu, \qquad i,j \ge 1,
\end{equation}
see \eqref{nkey_est} and the preceding comments.

To prove \eqref{4.X},
let $i,j \ge 1$. Let $x \in \widetilde{B} \cap S_j$ and $y \in \widetilde{B} \cap S_i$.
Proposition \ref{lem:fcl} tells us that $x'$ and $y'$ are contained in a certain parallelogram $\mathcal{P}_h$, and
both parallelograms are contained in the one given by Proposition \ref{lem:fcl} with $h = (i-a_1 + 1) \vee (j-a_1+1)$;
notice that here $i,j \ge a_1 - 1$. Define
$$
\mathcal{M}'g(z) = \sup \frac{1}{|\mathcal{P}|} \int_{\mathcal{P}} |g(w)|\, dw,
$$
for any locally integrable function $g$ in $\mathbb{R}^2$, where the supremum is taken over
all parallelograms $\mathcal{P}$ containing $z$ and with sides parallel to two sides of the triangle $C_0$, and
$|\mathcal{P}|$ denotes the area of $\mathcal{P}$.
Then we can write the estimate
\begin{equation} \label{eq:p8r}
\frac{1}{\nu(\widetilde{B})} \int_{\widetilde{B}} \chi_{S_i}(y) f(y)\, d\nu(y)
 \le \frac{1}{\nu(\widetilde{B})}\, e^{-i}\, |\mathcal{P}_{i \vee j - a_1 + 1}| \int_i^{i+1} \mathcal{M}'f(y_1,x')\, dy_1.
\end{equation}
Note that $\mathcal{M}'$ is bounded on $L^p(\mathbb{R}^2)$ for
$1 < p < \infty$. Indeed, $\mathcal{M}'$ splits naturally into three components,
each determined by two edges of $C_0$. Then a linear transformation makes each component coincide with
the strong maximal operator $M_{\mathrm{str}}$ in $\mathbb{R}^2$.

Combining \eqref{eq:p8r} with \eqref{eq:p7r} we obtain
$$
\frac{1}{\nu(\widetilde{B})} \int_{\widetilde{B}} \chi_{S_i}(y) f(y)\, d\nu(y)
 \lesssim
e^{a_1-i} \big[ 1+ (i \vee j - a_1 + 1)^2 \big] \int_i^{i+1} \mathcal{M}'f(y_1,x')\, dy_1.
$$
This is an analogue of \eqref{n420}. From here one proceeds as before, arguing as done after \eqref{n420},
getting $L^p(d\nu)$-boundedness of the considered part of our maximal operator.

\medskip

\noindent \textbf{Case II:} $a(\widetilde{B})$ lies on a face of $C_+$.

Then $a'$ is an inner point of a side of the triangle $C_0$; see Figure \ref{fig6r}.
\begin{figure}[ht]
\includegraphics[height=0.4\textwidth]{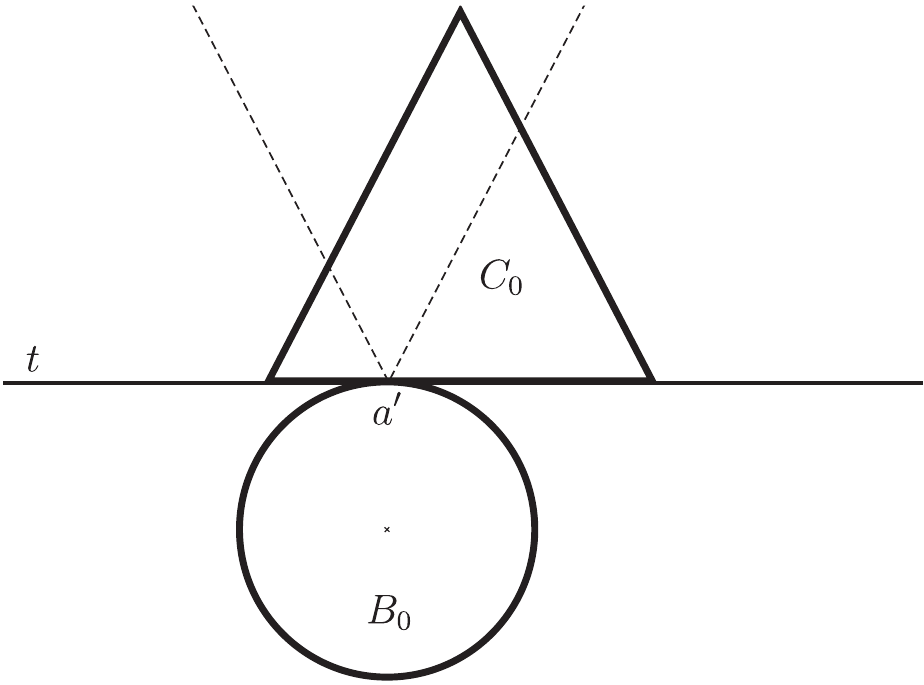}
\caption{The plane $\pi_{a_1}$ in Case II, $d=3$.}
\label{fig6r}
\end{figure}
We split $C_0$ into its intersections with three two-dimensional cones, by introducing two rays from $a'$ forming
angles of $\pi/3$ with the side of $C_0$. Then we apply the arguments from Case I,
using instead of $C_0$  each of these three intersections,
with $\beta =0$ twice and with $\beta = \pi/3$ once, as seen in Figure \ref{fig6r}.
That intersection which has  $\beta = \pi/3$ is not a triangle but a parallelogram.
But notice that the $h$-expansion of this parallelogram, analogous to $C_h$ in Case I, will necessarily
be contained in the analog of the parallelogram $\mathcal{P}_h$ constructed in Proposition~\ref{lem:fcl}.
To get the lower estimate \eqref{eq:p3r}, it is enough to argue as in Case I for the larger of the two
intersections with $\beta =0$. In each of the three intersections, we can now follow the pattern of Case I for all
upper estimates of integrals, and divide by $\nu(\widetilde{B})$.

This ends the case of dimension $3$.

\bigskip

\textbf{Dimension $\boldsymbol{d=4}$.}
We largely follow the three-dimensional argument. Recall that
$$
\widetilde{B} = \widetilde{B}(m,r) := \mathbf{B}(m,r) \cap C_+.
$$
The assumptions \eqref{obs12} remain in force. As in dimension three, we define for 
$0 \le h < r + (m_1 - a_1)$
\begin{equation}\label{ch}
C_h = \big\{ x' \in \mathbb{R}^3 : (a_1+h,x') \in C_+\big\}
\end{equation}
which is an open regular tetrahedron of side  $(a_1+h)\,\sqrt{8}$, and 
$$
B_h = \big\{ x' \in \mathbb{R}^3 : (a_1+h,x') \in \mathbf{B}(m,r) \big\}.
$$
Observe that \eqref{obs12} implies $a' \in \partial C_0$.
The radius of the ball $B_h$ will be denoted by $R_h$, and as before we write $R$ for $R_0$.

In $\pi_{a_1}$, which we identify with $\mathbb{R}^3$,
we now let $T$ be the tangent plane of the ball $B_0$ passing through  $a'$.
Moreover, $T_+$ will denote that closed half-space in $\pi_{a_1}$ whose boundary is $T$ and which contains $C_0$.

Instead of \eqref{m1-a1}, we now have
\begin{equation}\label{m1-a14}
	1 < r/2 < m_1-a_1 \le r.
\end{equation}
The equality \eqref{eq:p1r} remains valid and implies
\begin{equation} \label{m}
	R^2 = r^2 - (m_1-a_1)^2 =  (r+(m_1-a_1))(r-(m_1-a_1)) \simeq r(r-(m_1-a_1)).
\end{equation}
When $h<r$, we similarly get for $R_h$ in view of \eqref{m1-a14}
\begin{equation} \label{mm}
	R_h^2 = r^2 - (m_1-a_1-h)^2 =  (r+(m_1-a_1-h))(r-(m_1-a_1)+h) \simeq r(r-(m_1-a_1)+h).
\end{equation}
We also have
\begin{equation} \label{mmm}
	R_h^2 - R^2 = (m_1-a_1)^2 - (m_1-a_1-h)^2 = 2(m_1-a_1)h -h^2 \lesssim rh,
\end{equation}
the last step by \eqref{m1-a14}.

\medskip

The ``vertical coordinate'' $\tau$ in $\mathbb{R}^3$ is defined by \eqref{vertical}, as in the  three-dimensional case.

\medskip

We will need some angles connected with a regular tetrahedron. The angle at a vertex between an edge and the axis of symmetry
from that vertex is $\gamma$, where $\sin \gamma = 1/\sqrt 3$, and the angle between two faces of the tetrahedron is $2\gamma$.
Further, the angle between a face and an edge not in that face is $\kappa$, where  $\sin \kappa = \sqrt {2/3}$. Using this last angle, 
one finds that the ratio between the height and the edge of the tetrahedron is $ \sqrt {2/3} > 1/2$;
the height is the distance between a vertex and the opposite face.

\medskip

\noindent \textbf{Case I:}  $a'$ is a vertex of  $C_0$.

In $\mathbb{R}^3$, the point $a'$ is now an endpoint of three edges $e_1$, $e_2$ and $e_3$
of the tetrahedron $C_0$. Let $\beta_i,\; i= 1,2,3$, denote the angle at $a'$ between $e_i$ and the plane $T$.
Then $0 \le \beta_i \le \pi/2$, and at most two of the $\beta_i$ can be small.

Clearly $a'$ is an inner point of the ball $B_h$ when $ 0 < h < r + (m_1 - a_1)$. We consider for $i= 1,2,3$
the intersection of  $B_h$ and the ray in the direction of $e_i$ emanating from  $a'$.
Let $p_h^{i}$ be the length of this intersection. We can determine the $p_h^{i}$
exactly like $p_h$ in dimension three, and instead of \eqref{phr} we get for $0 < h< r/2$
\begin{equation} \label{phi}
  p_h^{i} \simeq \sqrt{rh} \wedge \frac{rh}{R\sin\beta_i}, \qquad i= 1,2,3.
\end{equation}

The argument leading to \eqref{eq:p3r} also carries over, so that
\begin{equation} \label{lower}
\nu(\widetilde{B}) \gtrsim \big(a_1 \wedge p_1^{1}\big) \,  \big(a_1 \wedge p_1^{2}\big) \, \big(a_1 \wedge p_1^{3}\big) \, e^{-a_1}.
\end{equation}

As before, $a'_h$ denotes the vertex of $C_h$ that corresponds to $a'$; one finds that the distance from $a'$
to $a'_h$ is $\sqrt 3\,h$.

Let $\mathcal{P}_h \subset \mathbb{R}^3$ for $0 < h  \le r/2$ be the minimal parallelepiped
containing $B_h \cap C_h$ which has one vertex at  $a'_h$ and  edges parallel to $e_1$, \hskip1pt $e_2$ and $e_3$.
Then $\mathcal{P}_h$ increases with $h$.

\begin{propo} \label{parallelep}
For $0 < h  \le r/2$,
  the sides of $\mathcal{P}_h$ are bounded by constant times  $(a_1+h) \wedge p_h^{i}$,
  \hskip3pt $i= 1,2,3$.
\end{propo}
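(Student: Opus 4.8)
The plan is to carry over the proof of Proposition~\ref{lem:fcl} (the three-dimensional Case~I), replacing the parallelogram by a parallelepiped spanned by $e_1,e_2,e_3$ and the planar picture by one in $\mathbb{R}^3$. First I would record the geometry in the hyperplane $\pi_{a_1+h}$, identified with $\mathbb{R}^3$: the tetrahedron $C_h$ is the dilation of $C_0$ by the factor $(a_1+h)/a_1$ about their common centroid, which lies on the $Ox_1$ axis, so each point of $C_h$ is within $\mathcal{O}(h)$ of $C_0$ and the corresponding vertex $a'_h$ satisfies $|a'_h-a'|=\sqrt3\,h$, as already noted before the proposition. With the vertical coordinate $\tau$ of \eqref{vertical} one has $T=\{\tau=R^2\}$, $B_h\subset\{\tau\le RR_h\}$ (the supremum of $\tau$ over $B_h$ being attained in the direction $a'-m'$), and, since $\tau$ is affine and $a'_h$ is the image of the $\tau$-minimal vertex $a'$ of $C_0$, also $C_h\subset\{\tau\ge\tau(a'_h)\}$. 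Hence $B_h\cap C_h$ lies in the slab bounded by the hyperplane $w$ through $a'_h$ parallel to $T$ and the parallel hyperplane $u=\{\tau=(RR_h)\wedge\sup_{C_h}\tau\}$.

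I would then split according to the sign of $\tau(a'_h)$, as in the planar case. If $\tau(a'_h)\le0$, then $R^2-\sqrt3\,hR\le\tau(a'_h)\le0$ forces $R\lesssim h$, hence $R^2\lesssim rh$ since $h\le r/2$; then \eqref{mm} and \eqref{mmm} give $R_h\simeq\sqrt{rh}$ and \eqref{phi} gives $p_h^{i}\simeq\sqrt{rh}$ for $i=1,2,3$. In this case the minimal parallelepiped $\mathcal{P}_h$ with vertex at $a'_h$ containing $B_h\cap C_h$ has each edge bounded by $(\textrm{side length of }C_h)\wedge(\textrm{diameter of }B_h)\simeq(a_1+h)\wedge\sqrt{rh}\simeq(a_1+h)\wedge p_h^{i}$, and we are done.

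If $\tau(a'_h)>0$, I would construct an auxiliary parallelepiped $\mathcal{P}_h^*\supset\mathcal{P}_h$ with a vertex at $a'_h$ and edges parallel to $e_1,e_2,e_3$, and bound its three sides; then $\mathcal{P}_h\subset\mathcal{P}_h^*$ by minimality. Since the $\tau$-level of $u$ is at most $RR_h$, the width of the slab is $W_h\le R_h-\tau(a'_h)/R$, and using $R_h-R\lesssim rh/R_h\lesssim\sqrt{rh}$, $|\tau(a'_h)/R-R|\lesssim h$, $R<r$ and the fact that $B_h\cap C_h\subset C_h$, one obtains $W_h\lesssim(a_1+h)\wedge\sqrt{rh}\wedge\frac{rh}{R}$ (this absorbs the two sub-cases for $u$, namely $u$ tangent to $B_h$ or $u$ through the farthest vertex of $C_h$). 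For each direction $e_i$ whose angle $\beta_i$ with $T$ is bounded away from $0$, the side of $\mathcal{P}_h^*$ along $e_i$ has length $\lesssim W_h/\sin\beta_i\lesssim W_h$, which by \eqref{phi} is $\lesssim(a_1+h)\wedge p_h^{i}$. Since at most two of $\beta_1,\beta_2,\beta_3$ can be small, the remaining directions are handled directly: the distance $\mathfrak{p}_h^{i}$ from $a'_h$ along $e_i$ to $\partial B_h$ is computed by subtracting \eqref{eq:p1r} from the sphere equation in $\pi_{a_1+h}$, rewriting the result as a quadratic in $1/\mathfrak{p}_h^{i}$ and solving it, precisely as in the passage from \eqref{eq:p2r} to \eqref{phr}; this gives $\mathfrak{p}_h^{i}\simeq\sqrt{rh}\wedge\frac{rh}{R\sin\beta_i}\simeq p_h^{i}$, and together with the bound $\lesssim a_1+h$ coming from the corresponding edge of $C_h$ this yields the claim for these directions as well. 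Finally one checks that $\mathcal{P}_h^*$ really contains $B_h\cap C_h$; the one delicate point, just as in the footnote to Proposition~\ref{lem:fcl}, is that where an $e_i$-edge of $\mathcal{P}_h^*$ meets $\partial B_h$ the outward normal of $B_h$ points into $C_h$, so the faces of $\mathcal{P}_h^*$ adjacent to that edge do not cut $B_h$.

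I expect the main obstacle to be the three-dimensional geometry of the case $\tau(a'_h)>0$: organizing the slab construction, carrying out the quadratic and trigonometric bookkeeping for the three quantities $\mathfrak{p}_h^{1},\mathfrak{p}_h^{2},\mathfrak{p}_h^{3}$ with three distinct and possibly very disparate angles $\beta_1,\beta_2,\beta_3$, and verifying the inclusion $B_h\cap C_h\subset\mathcal{P}_h^*$ (deciding in each direction whether one leaves $B_h\cap C_h$ through $\partial B_h$, through $\partial C_h$, or through the far bounding hyperplane of the slab). This is exactly the sort of technical difficulty of a geometrical nature alluded to in Remark~\ref{rem:restr}. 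The increasing property of the family $\{\mathcal{P}_h\}$, already stated before the proposition, is immediate from the construction, as in dimension three.
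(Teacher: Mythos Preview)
Your overall architecture matches the paper's: split off an easy regime where $R_h\simeq\sqrt{rh}$, and in the main regime build a parallelepiped with vertex $a'_h$ whose $e_i$-side is governed by the slab when $\beta_i$ is large and by the exit point on $\partial B_h$ when $\beta_i$ is small. The paper uses the threshold $h<c_0R$ (with $c_0$ small, to be chosen) rather than your $\tau(a'_h)>0$, and this difference matters for the one step where your sketch has a real gap: the inclusion $B_h\cap C_h\subset\mathcal P_h^*$.

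The footnote argument from the three-dimensional case does \emph{not} transfer verbatim. What one actually needs (via concavity of the ball) is $\langle b'_i-m',\hat e_j\rangle\ge 0$ for each $j\ne i$ with $\beta_i$ small. A direct computation gives
\[
\langle b'_i-m',\hat e_j\rangle \;=\; -\sqrt2\,h \;+\; R\sin\beta_j \;+\; \tfrac12\,\mathfrak p_h^{\,i}.
\]
In the planar picture ($d=3$) the second edge always has $\beta_2\ge\pi/3$, so $R\sin\beta_j\gtrsim R\gtrsim h$ and the sign is automatic. In $d=4$ two of the $\beta_j$ can be small simultaneously; then $R\sin\beta_j$ gives you nothing and you must have $\mathfrak p_h^{\,i}>2\sqrt2\,h$. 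Your two-sided estimate $\mathfrak p_h^{\,i}\simeq p_h^{\,i}$ only yields $\mathfrak p_h^{\,i}\gtrsim h$ with an unspecified implicit constant, and the constant one gets from $h\le r/2$ alone is not large enough. The paper secures the needed lower bound by working under $h<c_0R$ with $c_0$ chosen small (this forces $h\ll r$, hence $\sqrt{rh}\gg h$), and records it as Lemma~\ref{pp}: $|v_i|\ge 4h$ when $\sin\beta_i<1/32$. That lower bound is then exactly what drives the inclusion in Lemma~\ref{inclusion}, via a monotonicity argument for $|a'_h+\sum\alpha_k v_k-m'|^2$ in the transverse $\alpha_k$. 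Your normal-direction idea can be made to work, but only after you (i) replace the split $\tau(a'_h)\gtrless 0$ by $h\gtrless c_0R$ with $c_0$ small, and (ii) prove the quantitative bound $\mathfrak p_h^{\,i}\ge 4h$ for the small-angle directions. Without these, the claim ``the outward normal of $B_h$ points into $C_h$'' is not justified (indeed, in the $(\hat e_1,\hat e_2,\hat e_3)$-basis the normal can have a negative coefficient), and the inclusion is unproved.
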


To prove this, we fix $h \in (0, r/2]$ and deal first with the simple case when $h \ge c_0 R$, for some small constant
$c_0 > 0$ to be determined. Then the $p_h^{i}$  are all of magnitude $\sqrt{rh}$, and \eqref{mmm} implies
$$
  R_h^2  \lesssim R^2 + rh \lesssim rh,
$$
the last step since here $R \lesssim h$. Thus $R_h \lesssim \sqrt{rh}$.

Comparing the sides of  $\mathcal{P}_h$ with the minimum of $R_h$ and the side of $C_h$, we arrive at the conclusion
of the proposition, when $h \ge c_0 \,R$.

\smallskip

Consider now the remaining case $0 < h < c_0 R$, and observe that then $a'_h \in B_h.$
Let  $i\in \{ 1,2,3\}$. We define $\rho_i$ as the ray  parallel with $e_i$, with endpoint at $a'_h$
and contained in the half-space $ \{x': \tau(x') \ge \tau(a'_h) \} $.
If  $\sin\beta_i \ge 1/32$, we denote by  $b'_i$ the point of intersection of $\rho_i$ and $\partial H$, where $H$ is the half-space
$$
  H = \Big\{x': \tau(x') \le \sup_{B_h}\tau\Big\}.
$$
When $\sin\beta_i < 1/32$, we define $b'_i$ similarly, but now with the intersection point of $\rho_i$ and $\partial {B_h}$.
Finally, let $v_i$ be the vector  $b'_i - a'_h$, which is parallel with  $e_i$. See Figure \ref{fig9999}.

Define now
\begin{equation*}
  \mathcal{P}'_h = \left\{a'_h + \sum_1^3 \alpha_iv_i:\; 0 \le \alpha_i \le 1,\; i = 1,2,3 \right\},
\end{equation*}
a parallelepiped with one vertex at $a'_h$ and side lengths $|v_i|$. It is increasing in $h$.

We will need the following two lemmas, whose proofs are given after the end of the proof of Proposition \ref{parallelep}.

\begin{lemma} \label{pp}
If $0 < h< c_0R$ with $c_0$ small enough, then for $i= 1,2,3$
   \begin{equation*}
    |v_i|   \lesssim  {p}_h^{i},
   \end{equation*}
and if moreover $\sin\beta_i < 1/32$, then
   \begin{equation*}
    |v_i| \ge 4h.
   \end{equation*}
\end{lemma}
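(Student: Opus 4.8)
The plan is to compute the lengths $|v_i|$ essentially the same way $p_h$, $\mathfrak p_h$ and $p_h^i$ were computed in the three- and four-dimensional arguments, namely by writing down the equation for the boundary sphere $\partial B_h$ (or the bounding hyperplane $\partial H$) along the ray $\rho_i$ from $a'_h$ in the direction $e_i$, subtracting off \eqref{eq:p1r}, rewriting as a quadratic in $1/|v_i|$ and solving. First I would fix $i$ and set up coordinates adapted to $a'_h$, using the vertical coordinate $\tau$ from \eqref{vertical} and the decomposition of $e_i$ into its component along $a'-m'$ (governed by $\sin\beta_i$) and its orthogonal component (governed by $\cos\beta_i$); recall that the displacement from $a'$ to $a'_h$ has length $\sqrt3\,h$ and a known direction relative to $T$. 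Since $h<c_0R$, the point $a'_h$ lies inside $B_h$, so $\rho_i$ does hit $\partial B_h$; and in the case $\sin\beta_i\ge 1/32$ the relevant stopping hyperplane $\partial H$ is at vertical level $\sup_{B_h}\tau$, which is at distance comparable to $R_h\simeq R$ (using \eqref{mm} with $h<c_0R\lesssim R$, so $R_h\simeq R$) above the centre level.

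For the upper bound $|v_i|\lesssim p_h^i$: in the case $\sin\beta_i < 1/32$, $v_i$ is the chord of $B_h$ from $a'_h$ in direction $e_i$; expanding $|a'_h + |v_i| e_i - m'|^2 = R_h^2$, subtracting \eqref{eq:p1r}, and using $R_h^2-R^2\lesssim rh$ from \eqref{mmm} together with $r/2<m_1-a_1\le r$ from \eqref{m1-a14} and the bound on the $\tau$-displacement of $a'_h$, one gets a quadratic $|v_i|^2 + (\text{linear term})|v_i| - \mathcal O(rh) = 0$ whose positive root is $\simeq \sqrt{rh}\wedge (rh/(R\sin\beta_i))$, i.e.\ $\simeq p_h^i$ by \eqref{phi} — here $p_h^i\simeq\sqrt{rh}$ anyway since $\sin\beta_i$ is bounded. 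In the case $\sin\beta_i\ge 1/32$, $v_i$ is cut off by $\partial H$ rather than by $\partial B_h$, and since $e_i$ makes an angle with the vertical direction that is separated from $0$ (its vertical component is $\simeq\sin\beta_i\ge 1/32$), the length to reach vertical level $\sup_{B_h}\tau$ from level $\tau(a'_h)$ is at most a constant times the vertical gap, which is $\lesssim R_h\simeq R\lesssim \sqrt{rh}\,/\!\sqrt{c_0}\cdot\sqrt{c_0}$; more carefully one checks this gap is $\lesssim rh/(R\sin\beta_i)$ when $R\sin\beta_i \ge \sqrt{rh}$ and $\lesssim\sqrt{rh}$ otherwise, giving again $|v_i|\lesssim p_h^i$. (Whenever $a'_h$ happens to lie above $\sup_{B_h}\tau$, $v_i=0$ and there is nothing to prove.)

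For the lower bound $|v_i|\ge 4h$ when $\sin\beta_i < 1/32$: here $v_i$ is the full chord of $B_h$ through $a'_h$ in direction $e_i$, and the point is that this chord is long because $e_i$ is nearly horizontal while $a'_h$ is only a small vertical distance — at most $\sqrt3\,h$, hence $\lesssim h \ll R \simeq R_h$ — away from the centre level of $B_h$. Quantitatively, the half-chord of a ball of radius $R_h$ through a point at distance $\lesssim h$ from the centre, taken in a direction whose component toward the centre's perpendicular is $\ge\cos\beta_i\ge\sqrt{1-1/32^2}$, has length $\ge\sqrt{R_h^2 - \mathcal O(h^2)}\cdot(\text{const}) - \mathcal O(h) \gtrsim R_h \gtrsim R$; since $h<c_0R$ with $c_0$ small this is $\ge 4h$. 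I expect the bookkeeping of the various geometric angles (the tetrahedron angles $\gamma,\kappa$, the displacement direction of $a'_h$ relative to $T$, and the sign of $\tau(a'_h)$) to be the only real obstacle — the algebra is a direct rerun of the $d=3$ computation, but one must be careful to choose $c_0$ small enough, once and for all, so that all the "$\simeq$" collapses ($R_h\simeq R$, the error terms being lower order, and $4h < $ the lower bound) hold simultaneously and uniformly in $\widetilde B$.
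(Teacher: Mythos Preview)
Your overall strategy --- write the condition that the endpoint of $\rho_i$ lies on $\partial B_h$ (or $\partial H$), subtract \eqref{eq:p1r}, solve the resulting quadratic in $1/|v_i|$ --- is the same as the paper's, and your upper-bound sketch is essentially correct, if imprecise. The paper additionally keeps track of an angular parameter $\theta$ (the angle between the projections onto $\{\tau=\tau(a'_h)\}$ of $e_i$ and of the central axis of $C_h$), since the plane spanned by $\rho_i$ and the vertical direction need not contain $m'$; this is not needed for the upper bound but is essential for a uniform lower bound.

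Your lower-bound argument, however, contains a genuine error. You assert that $a'_h$ is ``at distance $\lesssim h$ from the centre'' of $B_h$ and conclude that the half-chord from $a'_h$ has length $\gtrsim R_h \gtrsim R$. But $a'$ lies on $\partial B_0$, so $|a'-m'|=R$, and $|a'_h-a'|=\sqrt3\,h$; hence $|a'_h-m'|$ is $R+O(h)$, not $O(h)$. In other words, $a'_h$ sits near the \emph{boundary} of $B_h$, not near its centre. Concretely, one computes
\[
R_h^2-|a'_h-m'|^2 \;=\; R_h^2-R^2+2\sqrt3\,Rh\sin\omega-3h^2 \;\simeq\; rh,
\]
so a chord from $a'_h$ in a direction perpendicular to $a'-m'$ (the extreme case $\beta_i=0$) has length $\simeq\sqrt{rh}$, which is $\ll R$ whenever $h\ll R^2/r$. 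Thus your estimate $|v_i|\gtrsim R$ is false in general. The paper instead proves
\[
|v_i|\;\ge\;\frac{rh}{8R\sin\beta_i}\wedge\frac{\sqrt{rh}}{12},
\]
which is the right order of magnitude and is $\ge 4h$ precisely because $\sin\beta_i<1/32$ (so $rh/(8R\sin\beta_i)>4h$) and $r>2$, $h<c_0 r$ with $c_0$ small (so $\sqrt{rh}/12\ge 4h$). To obtain this, you need to bound the constant and linear coefficients of the quadratic from the correct side, uniformly in the extra angle $\theta$; your heuristic about ``distance from the centre'' does not do this.
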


\begin{lemma}\label{inclusion}
If $0 < h<c_0R$ with $c_0$ small enough, then
\begin{equation*}
	B_h \cap C_h \subset \mathcal{P}'_h.
\end{equation*}
\end{lemma}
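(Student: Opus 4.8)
The plan is to reduce the inclusion to a single scalar inequality in each edge direction, then dispatch it via the vertical coordinate $\tau$ in the generic case and by a direct metric argument in the degenerate one. Let $\hat e_1,\hat e_2,\hat e_3$ denote the unit vectors along the edges $e_1,e_2,e_3$ of $C_h$ emanating from $a'_h$ (equivalently, along $e_i$ from $a'$), so that $v_i=|v_i|\,\hat e_i$ and $\mathcal{P}'_h=\{a'_h+\sum_{i=1}^3\alpha_iv_i:\,0\le\alpha_i\le 1\}$. Since the open tetrahedron $C_h$ lies in the simplicial cone with apex $a'_h$ spanned by $\hat e_1,\hat e_2,\hat e_3$, every $x'\in C_h$ has a unique representation $x'=a'_h+\sum_{i=1}^3 t_i\hat e_i$ with $t_i\ge 0$. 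Thus proving $B_h\cap C_h\subset\mathcal{P}'_h$ amounts to showing $t_i\le|v_i|$ for $i=1,2,3$ whenever $x'\in B_h\cap C_h$. First I would record three elementary identities: $\langle\hat e_i,a'-m'\rangle=R\sin\beta_i\ge 0$, because the edge leaves $a'$ into the half-space $T_+$ (making angle $\beta_i$ with $T$, whose unit normal towards $T_+$ is $(a'-m')/R$); $\langle\hat e_i,\hat e_j\rangle=\tfrac12$ for $i\ne j$, since adjacent edges of a regular tetrahedron meet at angle $\pi/3$; and $\langle a'_h-a',\hat e_j\rangle=-\sqrt2\,h$, which follows from $a'_h-a'=\tfrac h{a_1}a'$ together with $\langle a',\hat e_j\rangle=-\sqrt2\,a_1$ (a consequence of $|a'|=\sqrt3\,a_1$ and the vertex angle $\gamma$, $\sin\gamma=1/\sqrt3$). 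I would also use, as already noted, that $a'_h\in B_h$ for $c_0$ small, so the ray from $a'_h$ in the direction $\hat e_i$ meets $\partial B_h$ at exactly one point.

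Fix $i$ and $x'=a'_h+\sum_j t_j\hat e_j\in B_h\cap C_h$. In the generic case $\sin\beta_i\ge 1/32$, by construction $b'_i\in\partial H$, i.e.\ $\tau(b'_i)=\sup_{B_h}\tau$; since $\tau$ is affine with $\tau(a'_h+s\hat e_i)=\tau(a'_h)+sR\sin\beta_i$, this gives $|v_i|\,R\sin\beta_i=\sup_{B_h}\tau-\tau(a'_h)$. On the other hand $\tau(x')-\tau(a'_h)=\sum_j t_jR\sin\beta_j\ge t_iR\sin\beta_i$ (all summands nonnegative), while $\tau(x')\le\sup_{B_h}\tau$ because $x'\in B_h$. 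Dividing by $R\sin\beta_i>0$ yields $t_i\le|v_i|$; this settles the easy case, and also shows $|v_i|>0$ there.

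The case $\sin\beta_i<1/32$ is the crux, since the $\tau$-argument degenerates ($\partial H$ would be ``too far'' in the $\hat e_i$ direction) and $b'_i$ is instead placed on $\partial B_h$. Here I would argue by contradiction: suppose $t_i>|v_i|$ and set $y'=a'_h+t_i\hat e_i$. Because $a'_h\in B_h$ and $b'_i=a'_h+|v_i|\hat e_i$ is the unique exit point of the ray from $a'_h$ in the direction $\hat e_i$, the point $y'$ lies strictly outside $\cl(B_h)$, i.e.\ $|y'-m'|>R_h$. Writing $x'=y'+w$ with $w=\sum_{j\ne i}t_j\hat e_j$,
\[
 |x'-m'|^2=|y'-m'|^2+2\langle y'-m',w\rangle+|w|^2 ,
\]
and $|x'-m'|<R_h<|y'-m'|$ forces $\langle y'-m',w\rangle<0$, hence $\langle y'-m',\hat e_j\rangle<0$ for some $j\ne i$ with $t_j>0$. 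But by the three recorded identities,
\[
 \langle y'-m',\hat e_j\rangle=\langle a'-m',\hat e_j\rangle+\langle a'_h-a',\hat e_j\rangle+t_i\langle\hat e_i,\hat e_j\rangle=R\sin\beta_j-\sqrt2\,h+\tfrac{t_i}{2}\ \ge\ \tfrac{t_i}{2}-\sqrt2\,h ,
\]
so negativity forces $t_i<2\sqrt2\,h$. Since we are in the case $\sin\beta_i<1/32$, Lemma \ref{pp} gives $|v_i|\ge 4h$, whence $t_i>|v_i|\ge 4h>2\sqrt2\,h$, a contradiction. Therefore $t_i\le|v_i|$ in this case too, and combining both cases finishes the proof.

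I expect the case $\sin\beta_i<1/32$ to be the main obstacle: the linear control provided by $\tau$ is lost there and must be replaced by convexity of $x'\mapsto|x'-m'|^2$ along $\hat e_i$ together with the quantitative lower bound $|v_i|\ge 4h$ from Lemma \ref{pp}; the numerology $4>2\sqrt2$ is precisely what pins down the threshold $1/32$ and the smallness of $c_0$ required in Lemmas \ref{pp} and \ref{inclusion}.
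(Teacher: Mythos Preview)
Your proof is correct and follows essentially the same route as the paper's. Both arguments split on the threshold $\sin\beta_i \gtrless 1/32$: in the generic case you and the paper both use the affine functional $\tau$ to bound the $i$-th coefficient; in the degenerate case both show, via the inner products $\langle y'-m',\hat e_j\rangle$ for $j\ne i$, that setting the other coefficients to zero keeps the point in $B_h$, so the single-coordinate exit point $b'_i$ controls $t_i$. Your organization is the contrapositive of the paper's monotonicity argument, and your identity $\langle a'_h-m',\hat e_j\rangle=R\sin\beta_j-\sqrt2\,h$ (obtained directly from $\langle a',\hat e_j\rangle=-\sqrt2\,a_1$) is a cleaner bookkeeping than the paper's decomposition via the angle $\omega$; it yields the constant $2\sqrt2$ in place of the paper's $2\sqrt3$, but both are safely below the $4$ coming from Lemma~\ref{pp}.
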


Given these lemmas, and still assuming $0 < h < c_0 R$, let $ \mathcal{P}''_h$ be the minimal parallelepiped with one vertex at
$a'_h$ that contains $C_h$. Then the parallelepiped $\mathcal{P}^*_h = \mathcal{P}'_h \cap \mathcal{P}''_h$ will contain
$B_h \cap C_h$ because of Lemma~\ref{inclusion}. From Lemma~\ref{pp} and the fact that the sides of
$ \mathcal{P}''_h$ are of order of magnitude $a_1+h$, it follows that the sides of $\mathcal{P}^*_h$
are as stated in Proposition \ref{parallelep}. The minimality of $\mathcal{P}_h$ shows that
$\mathcal{P}_h \subset \mathcal{P}^*_h$, and this concludes the proof of Proposition \ref{parallelep}.

\medskip

In the proofs of the two lemmas, we will denote by $\omega$ the angle at $a'$ between the central axis of $C_0$ emanating from
$a'$ and the plane $T$.
Notice that $\omega \gtrsim 1$, since $\omega$ is at least as large as the angle between the central axis and a face of $C_0$.

\begin{figure}[ht]
\includegraphics[height=0.5\textwidth]{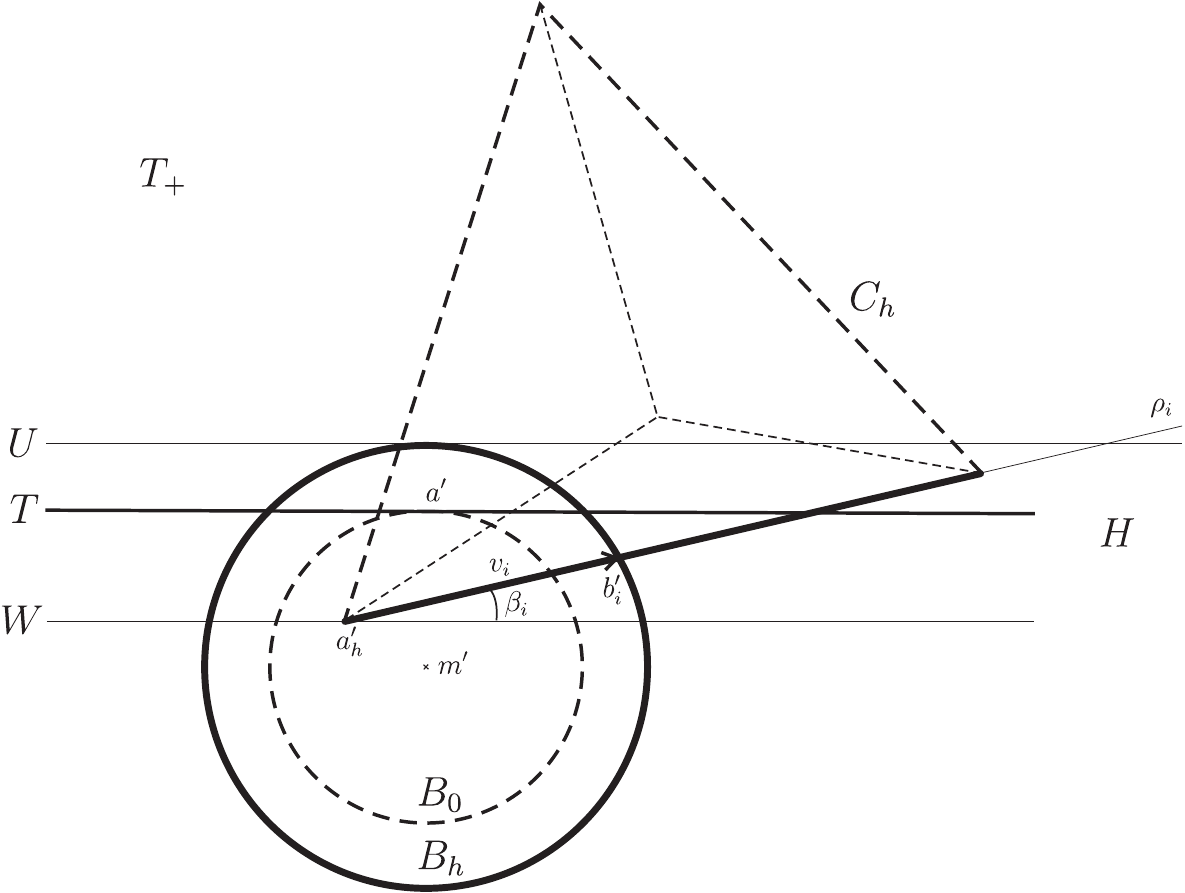}
\caption{The plane perpendicular to $T$ containing $a'_h$ and $b'_i$, \hskip2pt $d=4$.}
\label{fig9999}
\end{figure}

\begin{proof}[Proof of Lemma \ref{pp}]
Consider first the case  $\sin\beta_i \ge 1/32$.
The vertical distance $\tau(b'_i) - \tau(a'_h)$ is $R_h - R + \sqrt 3\, h \sin \omega$, see Figure \ref{fig9999}.
This gives an expression for $|v_i|$, and then we use in turn \eqref{mmm}, \eqref{mm}, \eqref{m} and then \eqref{phi}. As a result,
 \begin{multline*}
   |v_i| = \frac{R_h - R + \sqrt 3\, h \sin \omega}{\sin\beta_i} \lesssim \frac{R_h^2 - R^2}{R_h} + h \lesssim
   \frac{rh}{\sqrt{r(r-(m_1-a_1)+h)}} \\
   \lesssim \frac{rh}{\sqrt{r(r-(m_1-a_1))}} \wedge \sqrt{rh}
   \simeq \frac{rh}{R} \wedge \sqrt{rh} \simeq p_h^i.
 \end{multline*}

In the opposite case  $\sin\beta_i < 1/32$, the quantity  $|v_i|$ is the length of a segment from $a'_h$ to a point
on $\partial B_h$. The segment forms an angle $\beta_i$ with the plane $W = \{x': \tau(x') = \tau(a'_h)\}$
(and is on the same side of $W$ as the point $a'$), as seen in Figure \ref{fig9999}.

Project this segment and also the central axis of $C_h$ starting at $a'_h$ orthogonally onto the plane $W$.
Let $\theta$ denote the angle between these two projections at their common point $a'_h$.

Since the endpoint of the segment is on $\partial B_h$, the following equation will have the positive solution $z = |v_i|$,
and also a negative solution. We temporarily write $\ell = R - \sqrt 3 \,h\sin \omega$,
which is the vertical distance between $m'$ and $a'_h$. The equation is
\begin{equation*}
   (\ell + z\sin \beta_i)^2 + (-\sqrt 3 \,h\cos \omega + z \cos\beta_i \cos\theta)^2 + (z \cos\beta_i \sin\theta)^2 = R_h^2,
\end{equation*}
or simplified
\begin{equation*}
  z^2 + 2Kz  + L = 0,
\end{equation*}
where $K = \ell \sin \beta_i - \sqrt 3 \,h \cos \omega \cos\beta_i \cos\theta$ and
$L =\ell^2 + 3h^2\cos^2 \omega - R_h^2$. We consider this equation for all $\theta \in [0, \pi]$.
Since the two roots of the equation have opposite signs, the constant term $L$ is negative, which can also be seen geometrically.
Let us now vary only $\theta$, and write the positive solution as $z = z(\theta)$.
Differentiating the equation with respect to $\theta$, we get
\begin{equation*}
    (z+K)\,\frac{dz}{d\theta} = - z \sqrt 3\, h \cos \omega \cos\beta_i \sin\theta.
\end{equation*}
Since $z$ is the positive solution of the equation, $z+K$ equals the square root that appears in the well-known formula for the
solutions, so it is positive. Thus $dz/d\theta < 0$ for $0 < \theta < \pi$. It follows that  the minimal and maximal values of
$z(\theta)$ are $z(\pi)$ and $z(0)$, respectively, so that $z(\pi) \le|v_i| \le z(0)$.
We now rewrite the equation with these two values of $\theta$, and replace $K,\:L$ and also $\ell$ by their explicit expressions. 
Using  some elementary trigonometry, one obtains the result
\begin{equation*}
  z^2 + 2\big(R \sin \beta_i  \pm  \sqrt 3 \,h \cos(\omega \pm \beta_i)\big)z
  - \big(R_h^2 - R^2 + 2 \sqrt 3 \,Rh \sin \omega -3h^2\big) = 0,
\end{equation*}
where  the $\pm$ signs should be read as plus for $z(\pi)$ and minus for $z(0)$.
We now solve this equation for $1/z$, denoting
$$
   K_* = R \sin \beta_i  \pm  \sqrt 3 \,h \cos(\omega \pm \beta_i) \qquad \mathrm{and} \qquad
   L_* = R_h^2 - R^2 + 2 \sqrt 3 \,Rh \sin \omega -3h^2.
$$
The positive solution $z$ is given by
\begin{equation} \label{fraction}
   z = \frac{L_* }{K_* + \sqrt{K_*^2 +L_*}}.
\end{equation}

We estimate  the numerator and the denominator in \eqref{fraction} from above and below,
choosing $c_0$ small enough whenever needed. Because of \eqref{mmm}, we find
\begin{equation}\label{L+}
   L_* \le 2rh + 4Rh  \le  6rh  \lesssim rh
\end{equation}
and
\begin{equation}\label{L-}
   L_* \ge  2(m_1 - a_1)h -4h^2 \ge rh - 4c_0rh \ge rh/2,
\end{equation}
where we also used \eqref{m1-a14}. Further, \eqref{L+} implies that
\begin{equation}\label{d+}
  K_* + \sqrt{K_*^2 +L_*} \le  2|K_*| + \sqrt{L_*}  \le 2R \sin \beta_i + 2\sqrt 3 \,h +  \sqrt {6rh} 
	\le 2R \sin \beta_i + 3 \sqrt {rh}.
\end{equation}
From \eqref{L-}, we obtain
\begin{equation}\label{d-}
  K_* + \sqrt{K_*^2 +L_*} \ge  R \sin \beta_i - \sqrt 3 \,h +  \sqrt {rh/2} \gtrsim R \sin \beta_i +  \sqrt {rh}.
\end{equation}
These four inequalities hold whether the $\pm$ signs are read as plus or minus.

Combining \eqref{L+} and \eqref{d-} with \eqref{fraction}, we conclude that
\begin{equation*}
 |v_i|  \lesssim \frac{rh}{R\sin \beta_i} \wedge \sqrt{rh} \simeq p_h^i,
\end{equation*}
because of \eqref{phi}. If  $\sin\beta_i < 1/32$, \eqref{L-} and \eqref{d+} similarly yield
\begin{equation*}
 |v_i|  \ge \frac{rh/2}{2R \sin \beta_i + 3 \sqrt {rh}} \ge
 \frac12 \, \frac{rh/2}{(2R \sin \beta_i) \vee \big(3 \sqrt {rh}\big)} \ge
  \frac{rh}{8R \sin \beta_i} \wedge \frac{\sqrt {rh}} {12} \ge 4h.
\end{equation*}
The last two formulas end the proof of Lemma \ref{pp}.
\end{proof}

\begin{proof}[Proof of Lemma \ref{inclusion}]
Any point $x \in C_h$ can be written  $x = a'_h + \sum_1^3 \alpha_j v_j$ with $\alpha_j\ge 0$.
Assume now that  $x \in B_h \cap C_h$. We will show that  $\alpha_j \le 1$ for each $j$, so that
$x \in \mathcal{P}'_h$.  Thus we fix $j\in \{ 1,2,3\}$. Observe that to prove the inequality
$\alpha_j \le 1$, we may assume that  $\alpha_j \ge 1$, since the opposite case is clear.

Since $x= a'_h + \sum_1^3 \alpha_i v_i \in H$ and the function $\alpha_i \mapsto \tau(a'_h + \sum_1^3 \alpha_i v_i)$
is nondecreasing for each $i$, the point $ a'_h +\alpha_j v_j$ is also in $H$.
If $\sin\beta_j \ge 1/32$, this implies $\alpha_j \le 1$, by the definitions of $v_j$ and $b'_h$.

When instead $\sin\beta_j < 1/32$, we will similarly show that $\alpha_j \le 1$ by proving that  $ a'_h +\alpha_j v_j \in B_h$.
We know that $a'_h + \sum_1^3 \alpha_j v_j\in B_h$,
so it is enough to verify that the distance $|a'_h  + \sum_1^3 \alpha_i v_i-m'|$ is increasing in
$\alpha_i$ for $i \ne j$. But
\begin{multline*}
  \Big|a'_h + \sum_1^3 \alpha_i v_i-m' \Big|^2  \\
  =  \big|a'_h -m'\big|^2 + 2\sum_1^3 \alpha_i \langle a'_h -m', v_i \rangle + \sum_1^3 \alpha_i^2 |v_i|^2
		+ 2\sum_{1 \le i < k  \le 3} \alpha_i \alpha_k \langle  v_i, v_k \rangle,
\end{multline*}
and here all the terms to the right except possibly the second one are nondecreasing in $\alpha_i$. Further,
$\langle  v_i, v_k \rangle = |v_i| |v_k|/2$. Consider for $i \ne j$ the following  two terms from the right-hand side
\begin{equation}\label{terms}
  2 \alpha_i \langle a'_h -m', v_i \rangle +  \alpha_i \alpha_j |v_i| |v_j|.
\end{equation}
Now
\begin{equation*}
 \langle a'_h -m', v_i \rangle = -\sqrt 3\, h\cos \omega \,|v_i| \cos \beta_i  +  (R-\sqrt 3\, h\sin \omega)|v_i| \sin \beta_i,
\end{equation*}
so \eqref{terms} equals
\begin{equation*}
  \alpha_i |v_i|
  \left(-2\sqrt 3\, h\cos \omega \cos \beta_i +  2\big(R-\sqrt 3\, h\sin \omega\big)\sin \beta_i + \alpha_j  |v_j|\right).
\end{equation*}
It is  enough to verify that the three terms in this parenthesis have a positive sum.
The middle term is positive, since $c_0$ is small.
Recall that we assumed  $\alpha_j  \ge 1$ and also $\sin\beta_j < 1/32$ which implies $ |v_j| \ge 4h$ because of
Lemma \ref{pp}. Thus the first term in the above parenthesis is dominated by the third term,
the parenthesis is positive and the expression in \eqref{terms} is increasing in $\alpha_i$, as desired.
Lemma \ref{inclusion} is proved.
\end{proof}

We can now continue Case I as in three dimensions, but using the three quantities ${p}_h^{i}$ instead of
$p_h$ and $q_h$. In the estimate \eqref{eq:p7r} the exponent of $h$ will be 3 instead of $2$.
We extend the definition of $\mathcal P_h$ by setting it equal to  the smallest parallelepiped containing
$\mathcal P_{r/2} \cap B_{m_1-a_1}$ for $r/2 < h < r+(m_1-a_1)$; cf.\ the end of the proof of
Proposition \ref{lem:fcl} in the three-dimensional case. We leave the details finishing Case I to the reader.

\medskip

\noindent \textbf{Case II:} $a'$ is an inner point of a face of $C_0$.

This face of $C_0$ is  contained in the plane $T$, and we consider the three translates
$C_0^{j},\: j= 1,2,3$, of $C_0$ along  $T$ which have a vertex at $a'$
(see Figure \ref{fig8}, where for clarity only that face of $C_0$ contained in $T$ is marked).
\begin{figure}[ht]
\includegraphics[height=0.4\textwidth]{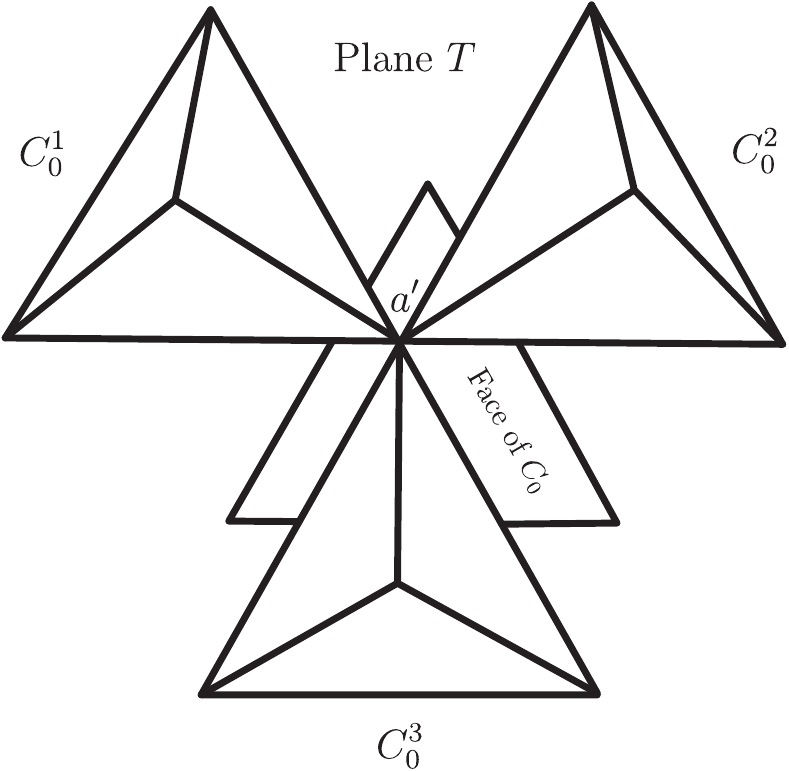}
\caption{Translates $C_0^{j}$ of $C_0$, $j= 1,2,3$, in Case II, \hskip2pt $d=4$.}
\label{fig8}
\end{figure}
The angles at $a'$ between $T$ and the edges of each $C_0^{j}$ are now $0,\;0,\;\kappa$.
The dilations  $C_h$ are given by  \eqref{ch}, and we can define for $0 \le h  < r + (m_1 - a_1)$ analogous dilations
$C_h^{j},\;j = 1,2, 3$, of the  $C_0^{j}$ by replacing in  \eqref{ch} \hskip2pt $C_+$ by the four-dimensional cone generated by
$C_0^{j} \times \{a_1\}$ and the origin. In analogy with the beginning of Case I, we consider
for each $j$ the intersection with $B_h$ of the three rays emanating from $a'$ and containing an edge of $C_h^{j}$.
As in Case I, we write  $p_h^{i},\: i= 1,2,3$, for the lengths of these intersections. The  $p_h^{i}$
will not depend on $j$, and  from \eqref{phi} we see that their orders of magnitude are
\begin{equation}\label{X}
  \sqrt{rh}, \qquad  \sqrt{rh} \qquad \mathrm{and} \qquad  \sqrt{rh} \wedge \frac{rh}{R}.
\end{equation}

At least one of the intersections $C_0\cap C_0^{j},\: j= 1,2,3$, is comparable in volume to $C_0$.
To estimate the measure of $\widetilde B$ from below, we can thus for one value of $j$ argue as in Case I with $C_0^{j}$
and $C_h^{j}$. Hence we still have the lower estimate \eqref{lower}. The corresponding upper estimate will now be verified.

In addition to $C_0^{j},\: j= 1,2,3$, we will consider a finite number of tetrahedra $C_0^{j},\,j=4,\dots, N$,
of the same size. They will all have a vertex at $a'$ and be contained in  $T_+$. We select them so that the
$C_0^{j},\,j=1,\dots N$, together cover a neighborhood of $a'$ in  $T_+$. Here $N$ will be an absolute constant.
Of the three angles at $a'$ between the plane $T$ and an edge of any $C_0^{j},\;j = 4,\dots, N$, at least one must stay away from 0,
since $C_0^{j} \subset T_+$. (In fact, the largest of these three angles is at least $\pi/4$.)  
This implies that the corresponding lengths $p_h^{i,j}$ (which will now depend also on $j$)
have orders of magnitude no larger than those in \eqref{X}.

By $C_0^{j,2}$ we denote the result of a scaling of $C_0^{j}$ centered at $a'$
by a factor of 2. Thus $a'$ is a vertex also of $C_0^{j,2}$.
The $C_0^{j,2},\;j = 1,\dots, N$, will together contain the intersection of $T_+$ and the ball
of center $a'$ and radius equal to the height of $C_0^{j,2}$. Since this height is larger than
the diameter, i.e., the edge, of $C_0^j$, we conclude that
\begin{equation*}
   \bigcup_{j=1}^N \, C_0^{j,2} \supset C_0.
\end{equation*}
            
The arguments from Case I will apply to each scaled tetrahedron $C_h^{j,2}$. In particular,
we choose as there minimal parallelepipeds $\mathcal P_h^j$ containing
$B_h \cap C_h^{j,2},\;\;j = 1,\dots, N$, which together cover $B_h \cap C_h$. The proofs of
Lemmas \ref{pp} and \ref{inclusion} and then also that of Proposition~\ref{parallelep} will go through for each $\mathcal P_h^j$,
and this allows us to conclude Case II like Case~I.

\medskip

\noindent \textbf{Case III:} $a'$ is an inner point of an edge of $C_0$.

This edge of $C_0$ will be called $e_0$. It is contained in $T$, and it is the intersection of two faces of $C_0$.
We denote by $\Pi'$ and $\Pi''$ the planes containing these faces. The angle between $\Pi'$ and $\Pi''$ is $2\gamma$.

\begin{figure}[ht]
\includegraphics[height=0.4\textwidth]{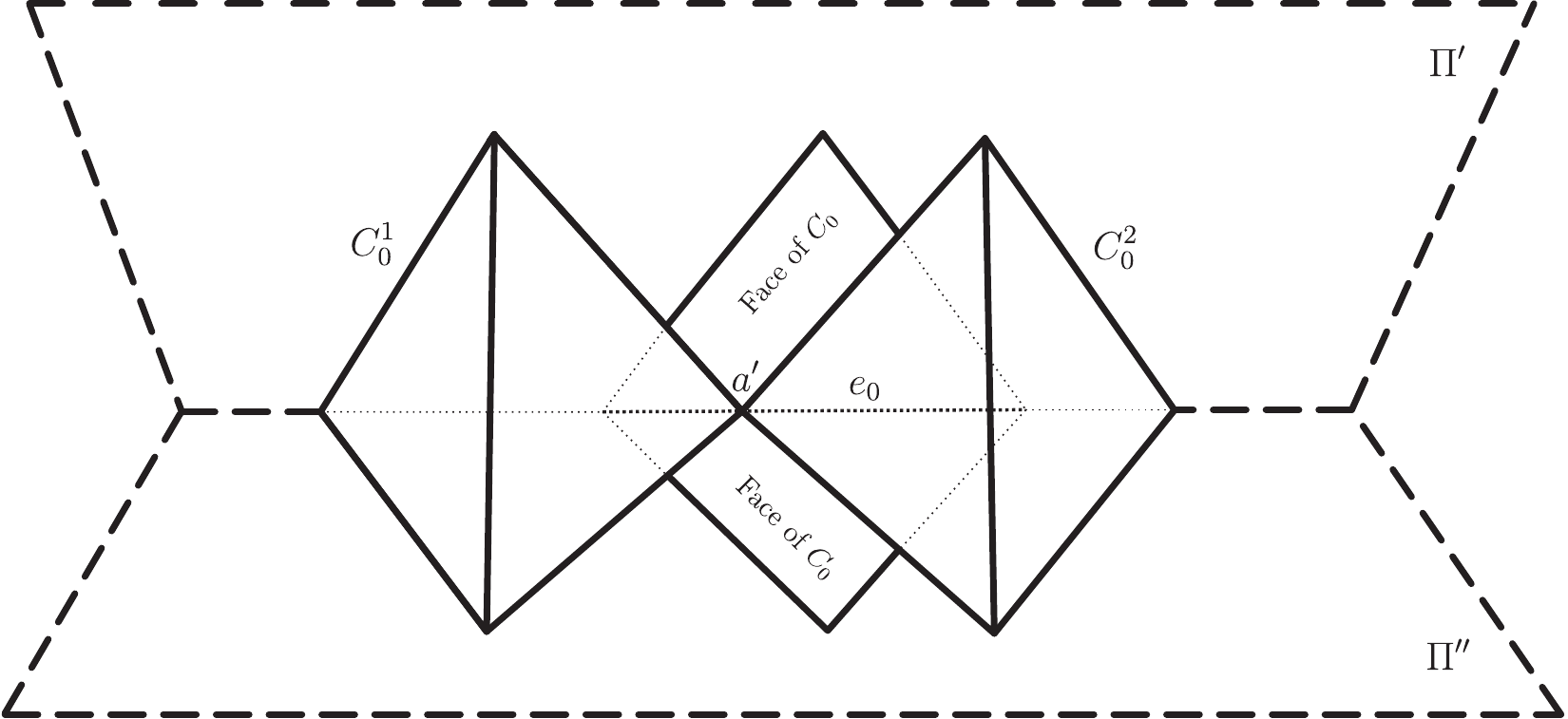}
\caption{Translates $C_0^{j}$ of $C_0$, $j= 1,2$, in Case III, \hskip2pt $d=4$.}
\label{fig9}
\end{figure}

Consider the translates $C_0^{1}$ and $C_0^{2}$ of $C_0$ along $e_0$ which have one vertex at $a'$ (see Figure~\ref{fig9}).
Both $C_0^{1}$ and $C_0^{2}$ have three edges with endpoint $a'$: one in $\Pi' \cap \Pi''$, one in $\Pi'$ and one in $\Pi''$.
These edges form angles with the plane $T$ which are $\beta_1 = 0$, \hskip3pt $\beta_2 > 0$ and $\beta_3 > 0$.
For $h \in (0, r+(m_1-a_1))$ we have dilations $C_h$, \hskip3pt $C_h^{1}$ and $C_h^{2}$ of $C_0$, $C_0^{1}$ and $C_0^{2}$,
where the latter two dilations are constructed as in Case II.
    
Following Case II, we introduce rays emanating from $a'$ along the three edges of  $C_0^{1}$ and $C_0^{2}$
and segments of lengths $p_h^i,\: i = 1,2,3$. These $p_h^i$ will satisfy \eqref{phi}.
At least one of the intersections $C_0 \cap C_0^{1}$ and $C_0 \cap C_0^{2}$ must have volume comparable to that of $C_0$.
The argument leading to \eqref{eq:p3r} can be applied to the corresponding $C_0^{j}$; cf.\ \eqref{lower}.
This gives the necessary lower estimate for $\nu(\widetilde B)$.
  
To get the corresponding upper estimate, we follow the pattern of Case~II.
We will cover $C_0$ by a finite number of (doubled) tetrahedra having one vertex at $a'$, among them
$C_0^{1}$ and $C_0^{2}$ doubled. This is done as follows.

Consider the wedge defined as that component of $\mathbb{R}^3\setminus \left(\Pi' \cup \Pi'' \right)$
which contains $C_0$. There is then a half-plane that splits this wedge in two congruent wedges denoted
$V'$ and $V''$; of these $V'$ shall be the one with boundary along $\Pi'$.

We will next rotate $C_0^{1}$, using as rotation axis the normal through $a'$ of the plane $\Pi'$.
The rotation angle will go from $0$ to $2\pi/3$; the angle $2\pi/3$ will bring $C_0^{1}$ to $C_0^{2}$.
During this rotation, the edge of $C_0^{1}$ from $a'$ in $\Pi' \cap  \Pi''$ and that in  $\Pi'$
will both stay in $\Pi'$. The edge from $a'$ which is in  $\Pi''$ before the rotation will describe a conic
surface, and its angle with $\Pi''$ will be positive and increase until it reaches a maximum at the rotation angle $\pi/3$.
Then it will decrease back to $0$. This maximum is seen to be
$2\gamma - \kappa$, and one has $0 < 2\gamma - \kappa < \gamma$, the last inequality since $\kappa > \gamma$.

This implies that the rotations of  $C_0^{1}$ considered will together cover the intersection of
$V'$ with a neighborhood of $a'$. We can then select a finite number of these rotated tetrahedra,
say $C_0^{j},\: j= 1,\dots N$, which together also cover a neighborhood of $a'$ in $V'$. Notice that $C_0^{1}$ and $C_0^{2}$
are included here. As in Case II, we consider the doubled tetrahedra $C_0^{j,2}$ with a vertex at $a'$ and conclude that 
\begin{equation*}
   \bigcup_{j=1}^N \, C_0^{j,2} \supset C_0 \cap V'.
\end{equation*}

To deal similarly with   $V''$, we repeat the rotation procedure, swapping $\Pi'$ and $\Pi''$ as well as $V'$ and $V''$.

The result will be that we cover $C_0$ by a finite number of tetrahedra, each having a vertex at $a'$.
The edges of these tetrahedra will have angles with $T$ which are larger than or equal to
$\beta_1$, \hskip3pt $\beta_2$ and $\beta_3$, respectively. This makes it possible to
argue as in Cases I and II, considering dilations $C_h$ and $C_h^{j,2}$ for $h \in (0, r+(m_1-a_1))$
and also minimal parallelepipeds.

This ends Case III and the argument in dimension four.


\end{document}